\DeclareMathOperator{\Tr}{tr}
\DeclareMathOperator{\im}{Im}
\DeclareMathOperator{\re}{Re}
\DeclareMathOperator{\rk}{rank}
\DeclareMathOperator{\ind}{ind}
\DeclareMathOperator{\ch}{ch}
\DeclareMathOperator{\dd}{d}
\DeclareMathOperator{\supp}{supp}
\DeclareMathOperator{\vol}{vol}
\DeclareMathOperator{\res}{Res}
\DeclareMathOperator{\id}{Id}
\DeclareMathOperator{\E}{\mathcal{E}}
\DeclareMathOperator{\Op}{Op}
\DeclareMathOperator{\Bd}{Bd}
\DeclareMathOperator{\ran}{ran}
\DeclareMathOperator{\spann}{span}
\theoremstyle{plain}
\newtheorem{theorem}{Theorem}[section]
\newtheorem{definition}[theorem]{Definition}
\newtheorem{lemma}[theorem]{Lemma}
\newtheorem{rem}[theorem]{Remark}
\newtheorem{prop}[theorem]{Proposition}
\newtheorem{corollary}[theorem]{Corollary}
\numberwithin{equation}{section}
\newcommand{\Lapl}{\mathcal{L}}
\begin{document}

\title{Resonant spaces for volume preserving Anosov flows}
\author[M. Ceki\'{c}]{Mihajlo Ceki\'{c}}
\date{\today}
\address{Laboratoire de Math\'{e}matiques d'Orsay, CNRS, Universit\'{e} Paris-Saclay, 91405 Orsay, France}
\email{mihajlo.cekic@u-psud.fr}
\author[G.P. Paternain]{Gabriel P. Paternain}
\address{Department of Pure Mathematics and Mathematical Statistics, University of Cambridge, Cambridge CB3 0WB, UK}
\email{g.p.paternain@dpmms.cam.ac.uk}

\begin{abstract} We consider Anosov flows on closed 3-manifolds preserving a volume form $\Omega$.
Following \cite{DyZw17} we study spaces of invariant distributions with values in the bundle of exterior forms whose wavefront set is contained in the dual of the unstable bundle. Our first result computes the dimension of these spaces in terms of the first Betti number of the manifold, the cohomology class $[\iota_{X}\Omega]$ (where $X$ is the infinitesimal generator of the flow) and the helicity. These dimensions coincide with the Pollicott-Ruelle
resonance multiplicities under the assumption of {\it semisimplicity}. We prove various results regarding semisimplicity on 1-forms, including an example showing that it may fail for time changes of hyperbolic geodesic flows.
We also study non null-homologous deformations of contact Anosov flows and we show that there is always
a splitting Pollicott-Ruelle resonance on 1-forms and that semisimplicity persists in this instance.
These results have consequences for the order of vanishing at zero of the Ruelle zeta function. Finally our analysis also incorporates a flat unitary twist in both, the resonant spaces and the Ruelle zeta function.

\end{abstract}

\maketitle

\section{Introduction}

In this paper we study resonant spaces of invariant distributions with values in the bundle of exterior forms for volume preserving Anosov flows on 3-manifolds. One of the main motivations for looking at these spaces is that when a natural restriction is placed on the wave front set of the distributions, their dimensions are related to the Pollicott-Ruelle resonance multiplicities which in turn determine the order of vanishing at zero of the Ruelle zeta function. For the case of contact Anosov flows this analysis was carried out by Dyatlov and Zworski in \cite{DyZw17} and here we show that the transition from ``contact" to ``volume preserving" presents some new features, making the overall picture more involved, partially due to the non-smoothness of the stable plus unstable bundle.

Let $(M,\Omega)$ be a closed 3-manifold equipped with a volume form $\Omega$ and let $\varphi_{t}$ be a volume preserving Anosov flow with infinitesimal generator $X$. If we write the Anosov splitting as $TM=\mathbb{R} X\oplus E_{s}\oplus E_{u}$, then we define the spaces $E_{0}^*$, $E_{s}^*$ and $E_{u}^*$ as the duals of $\mathbb{R} X$, $E_{u}$ and $E_{s}$ respectively. In particular, this means that for each $x\in M$, $E_u^*(x)$ is the annihilator of $\mathbb{R}X(x) \oplus E_u(x)$ and $E_u^*  \subset T^*M$, a closed conic subset.
We denote by $\mathcal{D}'_{E_u^*}(M; \Omega^k)$ the space of distributions with values in the bundle of exterior $k$-forms and with wave front set contained in $E_{u}^*$ (see Section \ref{section:prelim} for background on these notions). The resonant spaces that we are interested in are:
\[\textnormal{Res}_k(0):= \{u \in \mathcal{D}'_{E_u^*}(M; \Omega^k):\; \iota_{X}u=0,\;\iota_{X}du=0\}.\]
The dimensions of the spaces can be considered as {\it geometric multiplicities}. We note that the work \cite{DR17}
studies generalized resonant spaces of forms (at zero) for arbitrary Anosov flows and these have a good cohomology theory (cf. Remark \ref{rem:DR17} for more details and definitions) but in principle these generalized resonant forms are not in the kernel of $\iota_{X}$ and might only be in the kernel of some power of the Lie derivative.

Our first result computes the dimension of these geometric spaces in terms of the first Betti number $b_{1}(M)$ of the manifold $M$ and two natural characteristics of the flow that we now recall.

Since $X$ preserves the volume form $\Omega$, its Lie derivative $\mathcal L_{X}\Omega=0$. Hence the 2-form
$\omega:=\iota_{X}\Omega$ must be closed.

\begin{definition} {\rm We say that $X$ is {\it null-homologous} if the cohomology class $[\omega]=0$, i.e. $\omega$ is exact. For a null-homologous $X$, its {\it helicity} is the number
\[\mathcal H (X):=\int_{M}\tau(X)\Omega,\]
where $\tau$ is any 1-form such that $d\tau=\omega$.

}
\end{definition}

It is easy to check that this definition is independent of the choice of primitive $\tau$. The helicity (also referred to as the {\it asymptotic Hopf invariant}) measures how much in average field lines wrap and coil around one another.
We refer to \cite{AK98} for a complete account of this concept as well as its interpretation as an average self-linking number.

We can now state our first result:

\begin{theorem}\label{thm:volpres} Let $(M,\Omega)$ be a closed 3-manifold with volume form $\Omega$ and let $\varphi_{t}$ be a volume preserving Anosov flow. Then
\begin{enumerate}
\item $\text{\rm dim\,Res}_{0}(0)=\text{\rm dim\,Res}_{2}(0)=1$.
\item If $[\omega]\neq 0$, $\text{\rm dim\,Res}_{1}(0)=b_{1}(M)-1$.
\item If $[\omega]=0$, then
\[\text{\rm dim\,Res}_{1}(0)=\left\{\begin{array}{ll}
b_{1}(M)&\mbox{\rm if}\;\mathcal H (X)\neq 0\\
b_{1}(M)+1&\mbox{\rm if}\;\mathcal H (X)=0.\\
\end{array}\right.\]

\end{enumerate}

\label{thm:dimension}

\end{theorem}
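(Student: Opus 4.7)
The plan is to handle the easy degrees $k=0$ and $k=2$ first (each will give dimension~$1$), and then reduce the $k=1$ computation to two pieces via a linear map $\kappa\colon \text{Res}_1(0)\to\mathbb{R}$ and a de Rham class map.

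The case $k=0$ is immediate: $\text{Res}_0(0)$ is the kernel of $X$ on $\mathcal{D}'_{E_u^*}(M)$, which is one-dimensional and spanned by the constants by the standard microlocal Pollicott--Ruelle theory for volume preserving Anosov flows. For $k=2$, I use the $3$-manifold identification $u = \iota_Y\Omega$ between $2$-forms and vector fields: $\iota_X u = 0$ forces $Y\in\mathbb{R}X$ (since $\iota_X\iota_Y\Omega = -\iota_Y\omega$ vanishes only for $Y\in\mathbb{R}X$), so $u = h\omega$; a direct calculation using $d\omega=0$ and the identity $df\wedge\omega=(Xf)\Omega$ gives $du = (Xh)\Omega$, so $\iota_X du=0$ becomes $Xh = 0$, and the $k=0$ case forces $h$ constant, yielding $\text{Res}_2(0) = \mathbb{R}\omega$.

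For $k=1$, since $d$ sends $\text{Res}_1(0)$ into $\text{Res}_2(0) = \mathbb{R}\omega$, I define $\kappa$ by $du = \kappa(u)\omega$ and compute $\dim\text{Res}_1(0) = \dim\ker\kappa + \dim\operatorname{Image}\kappa$ case by case. For $\operatorname{Image}\kappa$: when $[\omega]\neq 0$, Stokes together with Poincar\'e duality give $\kappa\equiv 0$; when $[\omega] = 0$ and $\omega = d\tau$, if $\kappa(u) = c$ then $v := u - c\tau$ is a closed distributional $1$-form with $\iota_X v = -c\tau(X)$, and the distributional identity $\alpha(X)\Omega = \alpha\wedge\omega$ combined with Stokes ($\int v\wedge d\tau = -\int dv\wedge\tau = 0$) yields $-c\,\mathcal{H}(X) = \int v\wedge\omega = 0$, forcing $c = 0$ whenever $\mathcal{H}(X)\neq 0$. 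Conversely, when $[\omega] = 0$ and $\mathcal{H}(X) = 0$, the cohomological equation $Xf = -\tau(X)$ is solvable in $\mathcal{D}'_{E_u^*}(M)$ (its Fredholm cokernel obstruction is $\int\tau(X)\Omega = \mathcal{H}(X) = 0$), and $u := \tau + df$ then realizes $\kappa(u) = 1$.

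For $\ker\kappa$, I define $\phi\colon \ker\kappa \to H^1(M;\mathbb{R})$ by $u\mapsto [u]$, using that distributional de Rham cohomology agrees with $H^*(M;\mathbb{R})$. Taking a smooth closed representative $\eta$ of $[u]$ and writing $u = \eta + df$ converts $\iota_X u = 0$ into $Xf = -\eta(X)$, whose solvability in $\mathcal{D}'_{E_u^*}(M)$ amounts precisely to $L([\eta]) := \int_M \eta\wedge\omega = 0$; this identifies $\operatorname{Image}\phi = \ker L$, and since $L$ is cup product with $[\omega]$, Poincar\'e duality gives $\dim\ker L = b_1(M) - 1$ if $[\omega]\neq 0$ and $b_1(M)$ otherwise. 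Injectivity of $\phi$ follows cleanly from the ellipticity of $d$ on $0$-forms (its principal symbol $i\xi$ is injective for $\xi\neq 0$): if $u = dg$ distributionally, then $\mathrm{WF}(g) = \mathrm{WF}(dg) = \mathrm{WF}(u)\subseteq E_u^*$ and $Xg = \iota_X u = 0$, so the $k=0$ case forces $g$ constant and $u = 0$. Assembling, $\dim\text{Res}_1(0) = \dim\ker L + \dim\operatorname{Image}\kappa$ gives the three cases $b_1(M) - 1$, $b_1(M)$, $b_1(M) + 1$ of the theorem. The main technical prerequisites I will cite are the Fredholm analysis of $X$ on anisotropic Sobolev spaces (yielding both the $k=0$ uniqueness and the solvability criterion for $Xf = g$) and the distributional de Rham theorem; both are standard from the Faure--Sj\"ostrand framework used in \cite{DyZw17}.
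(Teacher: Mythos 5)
Your proposal is correct and follows essentially the same route as the paper: your map $\kappa$ is the paper's map $T$ (Lemma \ref{lemma:T}), your class map $\phi$ is the paper's injection $S$ (Lemma \ref{lemma:mapS}), the solvability of $Xf=g$ under the single cokernel obstruction $\int_M g\,\Omega=0$ is Proposition \ref{prop:existence}, and the Poincar\'e-duality computation of $\dim\ker L$ is Lemma \ref{lemma:S}. The only differences are cosmetic: the paper works in the twisted setting with a flat unitary connection (so $T$ takes values in parallel sections rather than $\mathbb{R}$), and it routes the $k=0,2$ cases and the vanishing of $\mathrm{Im}\,\kappa$ through the auxiliary fact that smooth invariant $1$-forms vanish (Lemma \ref{lemma:ressmoothtrivial}) together with the regularity Lemma \ref{lemma2}, which is the content behind the "standard theory" you cite for $k=0$.
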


This result generalizes \cite[Proposition 3.1]{DyZw17} as a contact Anosov flow fits into $[\omega]=0$ and $\mathcal H (X)\neq 0$, since in that case we can take $\tau$ to be the contact $1$-form and $\tau(X)=1$. In Section \ref{section:ex} we give some examples to illustrate the various cases in Theorem \ref{thm:dimension}, but we should point out right away that we do not know of any example of a volume preserving Anosov flow with zero helicity.

We note that all the notions involved in Theorem \ref{thm:dimension} are invariant under times changes. Namely, if $f$ is a positive smooth function, the flow of $fX$ is also Anosov and with the {\it same} $E^*_{u}$. Hence the resonant spaces $\textnormal{Res}_k(0)$ are the same for all such flows. Also the notion of being null-homologous or having non-zero helicity is unaffected by time changes.

As mentioned before, the dimensions of $\textnormal{Res}_k(0)$ are important since they are related to the Pollicott-Ruelle resonance multiplicities $m_{k}(0)$.
In general $m_{k}(0)\geq \textnormal{dim\,Res}_k(0)$, and equality holds under the following condition (cf. Lemma \ref{lemma:genres}):

\begin{definition} $X$ or $\varphi_t$ is said to be $k$-semisimple if given $u\in \mathcal{D}'_{E_u^*}(M; \Omega^k)$ with $\iota_{X}u=0$ and $\iota_{X}du\in \textnormal{Res}_k(0)$, then $u\in \textnormal{Res}_k(0)$, i.e. $\iota_{X}du=0$.
\label{definition:semisimple}
\end{definition}

Semisimplicity for $k=0,2$ will be easy to establish, but $1$-semisimplicity does not always hold. In the case of contact Anosov flows, 1-semisimplicity was proved in \cite[Lemma 3.5]{DyZw17}. For general volume preserving Anosov flows the bundle $E_u \oplus E_s$ is only H\"older continuous \cite{FoHa} and thus the $1$-form adapted to the flow, defined to be zero on $E_u \oplus E_s$ and $1$ on the generator $X$ is only H\"older continuous. As a consequence the computations done in \cite[Lemma 3.5]{DyZw17} are no longer viable due to this lack of smoothness.

Our next two results show that the picture for volume preserving Anosov flow is rather more subtle. Let $\mathcal X_{\Omega}$ denote the set of vector fields that preserve $\Omega$ and let $\mathcal X^{0}_{\Omega}\subset \mathcal X_{\Omega}$ denote those which are null-homologous.

\begin{theorem}\label{thm:localresonance} Let $(M,\Omega)$ be a closed 3-manifold with volume form $\Omega$.  Consider a smooth 1-parameter family $X_{\varepsilon}$ of volume preserving Anosov vector fields with $X_0$
1-semisimple.
\begin{enumerate}
\item If $X_{\varepsilon}\in\mathcal X^{0}_{\Omega}$ for every $\varepsilon$ and $\mathcal H(X_0)\neq 0$, then $X_{\varepsilon}$ is 1-semisimple for all $\varepsilon$ sufficiently small.
\item If $X_0$ is not null-homologous, then $X_{\varepsilon}$ is 1-semisimple for all $\varepsilon$ sufficiently small.
\end{enumerate}
For any hyperbolic surface, there is a time change of the geodesic flow which is not $1$-semisimple.
\label{thm:surprise}
\end{theorem}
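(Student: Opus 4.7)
The plan is to prove (1) and (2) perturbatively via upper semicontinuity of Pollicott--Ruelle resonance multiplicities, and to produce the time change in the final assertion by an explicit construction using the smooth Anosov splitting and the scalar PR resonance at $\lambda = -1$ of a hyperbolic geodesic flow.

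For (1) and (2), the key analytical input is the conservation, under smooth deformations of an Anosov vector field, of the total Pollicott--Ruelle multiplicity on $1$-forms inside a fixed small disc $D$ around $0$: for small $\varepsilon$ and $D$ containing no other resonances of $X_0$, $\sum_{\lambda\in D} m_1(\lambda, X_\varepsilon) = m_1(0, X_0)$, whence $m_1(0, X_\varepsilon) \le m_1(0, X_0)$. For (1), the primitive $\tau_\varepsilon$ of $\omega_\varepsilon = \iota_{X_\varepsilon}\Omega$ may be chosen to depend continuously on $\varepsilon$ by Hodge theory, so $\mathcal{H}(X_\varepsilon)$ is continuous and remains nonzero for small $\varepsilon$; Theorem~\ref{thm:dimension} then gives $\dim \textnormal{Res}_1(0, X_\varepsilon) = b_1(M)$. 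Combined with $m_1(0, X_0) = b_1(M)$ (from $1$-semisimplicity of $X_0$) and the general inequality $m_1(0, X_\varepsilon) \ge \dim \textnormal{Res}_1(0, X_\varepsilon)$, equality is forced and $X_\varepsilon$ is $1$-semisimple. Part (2) is strictly analogous, using that $[\iota_{X_\varepsilon}\Omega]\neq 0$ is an open condition, so $\dim \textnormal{Res}_1(0, X_\varepsilon) = b_1(M)-1$ remains constant.

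For the final assertion, let $X$ be the geodesic flow on $M = T^1\Sigma$ for a closed hyperbolic surface $\Sigma$. Smoothness of the Anosov splitting gives a smooth $1$-form $\beta_s \in C^\infty(M; E_u^*)$ dual to the stable horocycle generator with $\iota_X\beta_s = 0$ and $\mathcal{L}_X\beta_s = \beta_s$. The hyperbolic structure also supplies the first nontrivial scalar PR resonance at $\lambda = -1$: nonzero distributions $A_0 \in \mathcal{D}'_{E_u^*}(M)$ with $(X+1)A_0 = 0$ and $\tilde b \in \mathcal{D}'_{E_s^*}(M)$ with $(X-1)\tilde b = 0$ (both from the constant eigenfunction of $\Delta_\Sigma$). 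A direct computation yields $A_0\beta_s \in \textnormal{Res}_1(0)$: $\iota_X(A_0\beta_s) = 0$ and $\iota_X d(A_0\beta_s) = (XA_0)\beta_s + A_0\,\mathcal{L}_X\beta_s = (-A_0 + A_0)\beta_s = 0$. The idea is to find $f \in C^\infty_+(M)$ and $a \in \mathcal{D}'_{E_u^*}(M)$ such that $u := a\beta_s$ satisfies $\iota_Y u = 0$ and $\mathcal{L}_Y u = A_0\beta_s$; since $\mathcal{L}_Y = f\mathcal{L}_X$ on $\{\iota_X u = 0\}$, this reduces to the transport equation $(X+1)a = A_0/f$.

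By Fredholm theory at the simple pole $\lambda = -1$, $(X+1)a = A_0/f$ is solvable in $\mathcal{D}'_{E_u^*}(M)$ iff the scalar obstruction $\int_M (A_0/f)\,\tilde b\, \Omega$ vanishes; at $f\equiv 1$ this equals the nondegenerate residue pairing $\int_M A_0\tilde b\,\Omega\neq 0$. The main obstacle is to produce $f > 0$ smooth for which this integral vanishes, equivalently to show that the linear functional $g \mapsto \int_M A_0\tilde b\, g\,\Omega$ takes both signs on the positive cone $C^\infty_+(M)$, i.e.\ that $A_0\tilde b$ is not a Radon measure of fixed sign. I expect this to follow from the explicit description of $A_0, \tilde b$ via principal-series representations of $SL_2(\mathbb{R})/\Gamma$ (equivalently, as boundary values of Poisson-type integrals on $\partial \mathbb{H}^2$), which forces the product to oscillate. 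Once this is established, a continuity argument on the positive cone produces the required $f$, and then $u = a\beta_s$ realises a Jordan block for $\mathcal{L}_Y$ at $0$, contradicting $1$-semisimplicity of $Y = fX$.
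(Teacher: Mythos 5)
Your arguments for parts (1) and (2) are correct and coincide with the paper's approach (local constancy of Pollicott--Ruelle multiplicities, combined with Theorem~\ref{thm:volpres} and the general inequality $m_1(0)\geq\dim\text{Res}_1(0)$). For the final statement, the reduction you propose is also essentially the paper's Proposition~\ref{prop:semisimplicitypairing}: $1$-semisimplicity of $fX$ fails iff one can solve $(X-1)a = A_0/f$ in $\mathcal{D}'_{E_u^*}(M)$ for some nonzero first-band scalar resonance $A_0$, the obstruction being a pairing against co-resonant states. (Your signs are systematically reversed---$\mathcal{L}_X(\beta-\psi) = -(\beta-\psi)$, so your $\beta_s$ satisfies $\mathcal{L}_X\beta_s = -\beta_s$; the resonant condition in $\mathcal{D}'_{E_u^*}$ is $(X-1)A_0=0$, not $(X+1)A_0=0$; the co-resonant one in $\mathcal{D}'_{E_s^*}$ is $(X+1)\tilde b=0$---but the errors cancel in pairs, so they are not themselves fatal.)

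The genuine gap is in producing $f$. You treat the obstruction as a single scalar integral $\int(A_0/f)\,\tilde b\,\Omega$, apparently on the belief that the first-band resonance space is one-dimensional and ``comes from the constant eigenfunction of $\Delta_\Sigma$''. Neither is correct: $\res_X^0(-1)$ and its co-resonant counterpart each have dimension $b_1(M)=2g\geq 4$, and the pushforward $\pi_*$ to $\Sigma$ \emph{vanishes identically} on both (this is Lemma~\ref{lemma:wproperties} in the paper, proved via Stokes and the time-reversal map). Solvability of $(X-1)a = A_0/f$ with $a\in\mathcal{D}'_{E_u^*}$ for a fixed nonzero $A_0$ therefore imposes $\int(A_0/f)\,\tilde b_j\,\Omega=0$ for a full basis $\{\tilde b_j\}_{j=1}^{b_1(M)}$ of co-resonant states, i.e.\ $b_1(M)$ simultaneous linear conditions on $1/f$. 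Showing that one product $A_0\tilde b$ is not a signed measure and invoking a continuity argument on the positive cone kills one entry of the $b_1(M)\times b_1(M)$ pairing matrix; it does not make the matrix degenerate. The crucial input the paper uses and your sketch lacks is the \emph{factorization} of $v_1\overline{v}_2$ at $\lambda=-1$ in the boundary coordinates $(\nu_-,\nu_+,s)$ on $S\mathbb{H}^2$ (Proposition~\ref{prop:+-representation}): the density becomes $\tfrac12\,w_1(\nu_-)\,\overline{w}_2(\nu_+)\,d\nu_-\,d\nu_+\,ds$ with $w_1,w_2\in\mathcal{D}'(S^1)$ the boundary distributions. Fixing $w_2$ and taking a nonnegative test function of \emph{product form} $\varphi_-(\nu_-)\varphi_+(\nu_+)\psi(s)$ with $\langle w_2,\varphi_+\rangle_{S^1}=0$ makes the pairing vanish against \emph{every} $v_1$ simultaneously; the existence of such a $\varphi_+$, positive near a prescribed $\nu_+$ and supported away from $\nu_-$, is itself a nontrivial lemma exploiting the $\Gamma$-action on $S^1$ (Lemma~\ref{lemma:kernelw}). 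Covering a fundamental domain in $S\mathbb{H}^2$ by finitely many such neighbourhoods and Poincar\'e-summing over $\Gamma$ then produces a strictly positive $f\in C^\infty(S\Sigma)$. Without the boundary factorization and this patching scheme, a continuity argument on the positive cone cannot handle all $b_1(M)$ constraints at once.
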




Consider now a contact Anosov flow $X$ with contact form $\alpha$ on a closed $3$-manifold $M$. In particular, by Theorem \ref{thm:localresonance} we know that $1$-semisimplicity persists in $\mathcal{X}^0_\Omega$ and near $X$, where $\Omega = -\alpha \wedge d\alpha$. The next theorem gives us a local picture for what happens near $X$ and away from $\mathcal{X}^0_\Omega$.


\begin{theorem}\label{thm:perturbationflow}
	Consider $Y \in \mathcal{X}_\Omega \setminus \mathcal{X}_\Omega^0$. Then for sufficiently small $\varepsilon$, the flow $X_\varepsilon = X + \varepsilon Y$ is $1$-semisimple. Moreover, there is a splitting Pollicott-Ruelle resonance $-i\lambda_\varepsilon = O(\varepsilon^2)$ of $-i\Lapl_{X_\varepsilon}$ acting on $\Omega^1\cap\ker \iota_{X_\varepsilon}$ with $\lambda_\varepsilon < 0$ for $\varepsilon \neq 0$, with Pollicott-Ruelle multiplicity one (see Figure \ref{fig:splitting}). 
\end{theorem}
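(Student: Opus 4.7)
The plan is to combine a resonance-counting argument with Kato's analytic perturbation theory to pinpoint the splitting resonance and determine its leading-order behavior. At $\varepsilon=0$, $X_0=X$ is contact, falling into the case $[\omega]=0$, $\mathcal H(X)\neq 0$ of Theorem \ref{thm:dimension}, so $\textnormal{dim}\,\textnormal{Res}_1(0)=b_1(M)$; together with $1$-semisimplicity of $X_0$ (from \cite[Lemma 3.5]{DyZw17}), this yields algebraic Pollicott-Ruelle multiplicity $m_1(0)=b_1(M)$. For $\varepsilon\neq 0$ small, $\omega_\varepsilon = \iota_{X_\varepsilon}\Omega = -d\alpha + \varepsilon\iota_Y\Omega$ satisfies $[\omega_\varepsilon]=\varepsilon[\iota_Y\Omega]\neq 0$ since $Y\notin \mathcal X_\Omega^0$, so Theorem \ref{thm:dimension}(2) gives $\textnormal{dim}\,\textnormal{Res}_1(0)|_{X_\varepsilon}=b_1(M)-1$. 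The standard perturbation theory of Ruelle resonances on anisotropic Sobolev spaces then implies that the total algebraic multiplicity of resonances of $-i\Lapl_{X_\varepsilon}|_{\Omega^1\cap\ker\iota_{X_\varepsilon}}$ in any small disk $D\ni 0$ is locally constant in $\varepsilon$ and equals $b_1(M)$.

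Next I would apply the Kato reduction: the spectral projector $\Pi_\varepsilon$ onto the generalized resonant eigenspace in $D$ is an analytic family of rank $b_1(M)$, and via a transformation operator we obtain an analytic matrix $A(\varepsilon) = \varepsilon A_1 + \varepsilon^2 A_2 + \cdots$ on the fixed space $\textnormal{Res}_1(0)|_{X_0}$ whose eigenvalues coincide with the resonances in $D$. The distinguished $1$-dimensional ``splitting subspace'' $V\subset \textnormal{Res}_1(0)|_{X_0}$ is the complement of the $b_1(M)-1$ directions persisting to $\textnormal{Res}_1(0)|_{X_\varepsilon}$; it is singled out precisely by the cohomological obstruction appearing in the proof of Theorem \ref{thm:dimension}(2) that kills the ``extra'' $\mathcal H\neq 0$ dimension once $[\omega_\varepsilon]\neq 0$. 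I would then compute $\langle A_1 u_0, v_0^*\rangle$ for a generator $u_0\in V$ and a dual resonant state $v_0^*$ for $X_0$; this should vanish by an integration-by-parts identity exploiting $\mathcal L_Y\Omega=0$, the contact equation at $\varepsilon=0$, and the cohomological characterization of $V$, yielding $\lambda_\varepsilon = O(\varepsilon^2)$.

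Since a splitting of order $\varepsilon^2$ (i.e.\ $\lambda_\varepsilon\neq 0$ for $\varepsilon\neq 0$ small) is incompatible with a Jordan block at $0$, the $b_1(M)-1$ kernel dimension matches the algebraic multiplicity for $\varepsilon\neq 0$, giving $1$-semisimplicity of $X_\varepsilon$ together with Pollicott-Ruelle multiplicity one for $-i\lambda_\varepsilon$. Reality of $\lambda_\varepsilon$ (so that $-i\lambda_\varepsilon$ is purely imaginary) follows from the multiplicity-one conclusion together with the real structure of the resonance problem under complex conjugation. The principal obstacle is the sign of the splitting: proving $\lambda_\varepsilon<0$ requires computing the second-order coefficient $\langle A_2 u_0, v_0^*\rangle$ and recognizing it as a definite quadratic form in $Y$ of the correct sign, presumably arising from a helicity-type pairing tied to the contact/Hodge structure of $X_0$. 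This entails tracking how $\Pi_\varepsilon$ deforms to second order and identifying the resulting bilinear form, which I expect to be the central technical step of the proof.
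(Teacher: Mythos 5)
Your high-level strategy is consistent with the paper's: use Theorem \ref{thm:volpres} to see that $\dim\text{Res}_1(0)$ drops from $b_1(M)$ at $\varepsilon=0$ (contact) to $b_1(M)-1$ for $\varepsilon\neq 0$ (where $[\omega_\varepsilon]\neq 0$), invoke local constancy of the algebraic Pollicott--Ruelle multiplicity inside a small disk (the paper's Lemma \ref{lemma:constrank}) to conclude exactly one resonance can move, and observe that once the split resonance is genuinely nonzero, $1$-semisimplicity and multiplicity one follow from dimension counting. You also correctly predict that the first-order term vanishes, giving $\lambda_\varepsilon=O(\varepsilon^2)$.

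However, you explicitly leave open the step that actually carries the theorem: proving $\lambda_\varepsilon\neq 0$ (indeed $\lambda_\varepsilon<0$). Without it, neither $1$-semisimplicity nor the multiplicity-one claim follows from your outline, since both rest on the resonance having left $0$. The paper does not obtain the sign from an abstract ``helicity-type'' quadratic form but from two concrete lemmas. Lemma \ref{lemma:uandY} identifies the splitting direction explicitly as $u=\Pi_0\Lapl_Y\alpha$ (rather than via an abstract Kato reduction and a cohomological characterization of $V$) and pairs it with a co-resonant state $v$ to get $\langle u,v\rangle_{L^2}=-W_Y(\theta)$, where $W_Y$ is the winding cycle of $Y$ and $\theta$ is the smooth closed representative of $u$'s class. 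Lemma \ref{lemma:pairings} then supplies the crucial definiteness: writing $u=\theta+d\varphi$, $v=J^{-1}(\theta+d\psi)$, one shows
\begin{equation*}
\re\langle u,v\rangle_{L^2}=\im\langle -iX\varphi,\varphi\rangle_{L^2}\le 0,
\end{equation*}
with equality forcing $u=0$, and this is proved via the microlocal positive-commutator regularity Lemma \ref{lemma2} (the same lemma underlying \cite[Lemma 3.5]{DyZw17}), not by a routine integration by parts using only $\Lapl_Y\Omega=0$ and the contact equation. Combined, $W_Y(\theta)>0$ whenever $Y$ is non-null-homologous, and the second-order linearisation of the eigenvalue equation gives $\ddot\lambda\,\langle u,v\rangle=\frac{2W_Y(\theta)^2}{\vol(M)}$, hence $\ddot\lambda=-\frac{2W_Y(\theta)}{\vol(M)}<0$. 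The reliance on Lemma \ref{lemma2} to exclude the degenerate case is the precise mechanism your proposal is missing.
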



\begin{figure}
             \centering
\begin{tikzpicture}
\tikzset{cross/.style={cross out, draw=black, minimum size=2*(#1-\pgflinewidth), inner sep=0pt, outer sep=0pt},
cross/.default={1pt}}

      	\draw[thick, ->] (0,-3) -- (0,3) node[above] {\small $\im s$};
		\draw[thick, ->] (-3.5,0) -- (3.5,0) node[below] {\small $\re s$};
      	
      	\draw[thick] (-1.5, -3) node[left] {\small $\re s = -\frac{1}{2}$}-- (-1.5, 3);
		\draw[thick] (-3, 0) node[cross=3pt,rotate=0,red]{} node[above]{\small $-1$};
		\draw[thick] (-1.5, 0) node[cross=3pt,rotate=0,red]{}; 
		\draw[thick] (-1.3, 0) node[cross=3pt,rotate=0,blue]{};
		\draw[thick] (-1.1, 0) node[cross=3pt,rotate=0,blue]{};
		\draw[thick] (-1.7, 0) node[cross=3pt,rotate=0,blue]{};
		\draw[thick] (-1.9, 0) node[cross=3pt,rotate=0,blue]{};
		
		\draw[thick] (-1.5, 1) node[cross=3pt,rotate=0,green]{}; 
    	\draw[thick] (-1.5, 1.5) node[cross=3pt,rotate=0,green]{}; 
    	\draw[thick] (-1.5, 2.3) node[cross=3pt,rotate=0,green]{}; 
    	\draw[thick] (-1.5, -1) node[cross=3pt,rotate=0,green]{}; 
    	\draw[thick] (-1.5, -1.5) node[cross=3pt,rotate=0,green]{}; 
    	\draw[thick] (-1.5, -2.3) node[cross=3pt,rotate=0,green]{}; 
    	
      	\draw[thick] (1.5, -3) node[right] {\small $\re s = \frac{1}{2}$} -- (1.5, 3);
		\draw[thick] (3, 0) node[cross=3pt,rotate=0,blue]{} node[above]{\small $1$};
		\draw[thick] (1.5, 0) node[cross=3pt,rotate=0,red]{};
		\draw[thick] (1.3, 0) node[cross=3pt,rotate=0,blue]{};
		\draw[thick] (1.1, 0) node[cross=3pt,rotate=0,blue]{};
		\draw[thick] (1.7, 0) node[cross=3pt,rotate=0,blue]{};
		\draw[thick] (1.9, 0) node[cross=3pt,rotate=0,blue]{};
		
		\draw[thick] (1.5, 1) node[cross=3pt,rotate=0,green]{}; 
    	\draw[thick] (1.5, 1.5) node[cross=3pt,rotate=0,green]{}; 
    	\draw[thick] (1.5, 2.3) node[cross=3pt,rotate=0,green]{}; 
    	\draw[thick] (1.5, -1) node[cross=3pt,rotate=0,green]{}; 
    	\draw[thick] (1.5, -1.5) node[cross=3pt,rotate=0,green]{}; 
    	\draw[thick] (1.5, -2.3) node[cross=3pt,rotate=0,green]{}; 

		\draw[thick] (0, 0) node[cross=3pt,rotate=0,red]{} node[below left]{\small $0$};

		\draw[dashed] (0,0) circle [radius=1]; 

		\draw[thick] (3.5, 2) node[cross=3pt,rotate=0,green]{} node[right]{\tiny $= \mathrm{large\, eigenvalues}$};
		\draw[thick] (3.5, 1.7) node[cross=3pt,rotate=0,blue]{} node[right]{\tiny $= \mathrm{small\, eigenvalues}$};
		\draw[thick] (3.5, 1.4) node[cross=3pt,rotate=0,red]{} node[right]{\tiny $= \mathrm{special\, points}$};

		
		\draw[thick, ->] (5.5,0) -- (12.5,0) node[below] {\small $\re s$};
      	\draw[thick, ->] (9,-3) -- (9,3) node[above] {\small $\im s$};
		
		\draw[dashed] (9, 0) circle [radius=1];
		\draw[thick] (9, 0) node[cross=3pt,rotate=0,red]{} node[below left]{\small $0$};
		\draw[thick] (9.6, 0) node[cross=3pt,rotate=0,purple]{} node[below]{\small $-\lambda_\varepsilon$};
		
		\draw[thick] (10, 2) node[cross=3pt,rotate=0,purple]{} node[right]{\tiny $= \mathrm{splitting\, resonance}$};
\end{tikzpicture}
             \caption{\small On the left: resonance spectrum of $\Lapl_X$ acting on $\Omega^1(S\Sigma)$, for a closed hyperbolic surface $\Sigma$. According to \cite{GHW18, DFG15} and Remark \ref{rem:speconeformshyperbolic} below, the green crosses correspond to (large) eigenvalues $\mu \geq \frac{1}{4}$ of $-\Delta_\Sigma$, the blue ones correspond to (small) eigenvalues $\mu \leq \frac{1}{4}$ and the red ones are ``special". On the right: resonance spectrum of $\Lapl_{X_\varepsilon}$ acting on $\Omega^1(S\Sigma)$ and the splitting resonance, according to Theorem \ref{thm:perturbationflow}. We remark that the resonances in the rest of this paper will often be given by $\lambda = is$, i.e. obtained by a rotation of $\pi/2$ from this picture.}
             \label{fig:splitting}
\end{figure}
\subsection{Ruelle zeta function} We denote the set of primitive closed orbits of $X$ by $\mathcal{G}_0$ (i.e. the ones that are not powers of a closed orbit in $M$); the period of $\gamma \in \mathcal{G}_0$ is denoted by $l_\gamma$. The Ruelle zeta function is defined as:
\begin{align}\label{DefRuelleZeta}
\zeta (s):= \prod_{\gamma \in \mathcal{G}_0}{\big(1 - e^{-sl_\gamma}\big)}.
\end{align}
The infinite product converges for $\text{\rm Re}\, s\gg1$ and its meromorphic continuation to all $\mathbb{C}$ was first established in \cite{GiLiPo} in full generality and subsequently in \cite{DyZw16}, where a microlocal approach was employed (cf. \cite{Pnotes} for a survey of dynamical zeta functions). Moreover, it was shown in \cite{DyZw16} that there is a factorisation (assuming that $E_{s}$ and $E_{u}$ are orientable)
\begin{align}\label{factorisationruelle}
\zeta(s) = \frac{\zeta_{1}(s)}{\zeta_{0}(s) \zeta_{2}(s)},
\end{align}
where $\zeta_{k}(s)$ is an entire function with the order of vanishing at each $s \in \mathbb{C}$ equal to $m_{k}(is)$, for $k = 0, 1, 2$. Here $m_{k}(\lambda)$ is the Pollicott-Ruelle resonance multiplicity (see Section \ref{section:prelim} for more details). Hence the order of vanishing of $\zeta$ at $s=0$ is determined by
$m(0):=m_{1}(0)-m_{0}(0)-m_{2}(0)$. Using this and Theorem \ref{thm:dimension} we derive

\begin{corollary}
\label{corollary:zeta} Let $(M,\Omega)$ be a closed 3-manifold with a volume preserving Anosov flow $\varphi_t$ whose stable and unstable bundles are orientable. Then
\[s^{n(M,X)}\zeta(s)\]
is holomorphic close to zero, where
\begin{align*}
n(M,X)&=3-b_{1}(M),\;\text{if}\;\;[\omega]\neq 0,\\
n(M,X)&=2-b_{1}(M),\;\text{if}\;\;[\omega]= 0,\;and\;\,\mathcal H (X)\neq 0,\\
n(M,X)&=1-b_{1}(M),\;\text{if}\;\;[\omega]=0\;and\;\,\mathcal H (X)= 0.\\
\end{align*}
Moreover, if $\varphi_t$ is 1-semisimple, then $s^{n(M,X)}\zeta(s)|_{s=0}\neq 0$.
\end{corollary}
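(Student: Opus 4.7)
My plan is to translate the order of vanishing of $\zeta$ at $s=0$ into the resonance multiplicities $m_k(0)$, bound these using Lemma~\ref{lemma:genres} together with Theorem~\ref{thm:dimension}, and sharpen the bounds to equalities in the cases $k=0,2$ (unconditionally) and $k=1$ (under the $1$-semisimplicity hypothesis). From the factorisation~\eqref{factorisationruelle}, which is valid because $E_s$ and $E_u$ are orientable, one reads off
\begin{equation*}
\mathrm{ord}_{s=0}\,\zeta \;=\; m_1(0)-m_0(0)-m_2(0) \;=\; m(0),
\end{equation*}
so $s^n\zeta(s)$ is holomorphic near $0$ iff $n+m(0)\geq 0$, with $s^n\zeta(s)\big|_{s=0}\neq 0$ iff equality holds.

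The next step is to verify that $X$ is unconditionally $0$- and $2$-semisimple; combined with $\dim\textnormal{Res}_0(0)=\dim\textnormal{Res}_2(0)=1$ from Theorem~\ref{thm:dimension}, this gives $m_0(0)=m_2(0)=1$ via Lemma~\ref{lemma:genres}. For $k=0$: any $u\in\mathcal D'_{E_u^*}(M)$ with $Xu=c\in\mathbb R=\textnormal{Res}_0(0)$ can be tested against the constant function $1$ in the $L^2(M,\Omega)$ pairing; since $\mathcal L_X\Omega=0$ forces $X^*=-X$, distributional integration by parts gives $c\cdot\mathrm{vol}(M)=\langle Xu,1\rangle=-\langle u,X\cdot 1\rangle=0$ and hence $c=0$. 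For $k=2$: any $u\in\mathcal D'_{E_u^*}(M;\Omega^2)$ with $\iota_X u=0$ satisfies $du=f\Omega$ for some scalar distribution $f$; because $\omega:=\iota_X\Omega$ is pointwise non-zero and spans $\textnormal{Res}_2(0)$, the hypothesis $\iota_X du\in\textnormal{Res}_2(0)$ reads $f\omega = c\omega$, forcing $f\equiv c$ constant. Stokes' theorem on the closed manifold $M$ then yields $c\cdot\mathrm{vol}(M)=\int_M du=0$, so $c=0$.

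With these equalities, Lemma~\ref{lemma:genres} reduces everything to $m_1(0)$: the inequality $m_1(0)\geq\dim\textnormal{Res}_1(0)$ gives
\begin{equation*}
-m(0) \;=\; 2-m_1(0) \;\leq\; 2-\dim\textnormal{Res}_1(0) \;=\; n(M,X),
\end{equation*}
where the last equality is a case check against Theorem~\ref{thm:dimension} in each of the three regimes $[\omega]\neq 0$, $[\omega]=0$ with $\mathcal H(X)\neq 0$, and $[\omega]=0$ with $\mathcal H(X)=0$. This delivers the claimed holomorphy. Under the additional $1$-semisimplicity hypothesis, Lemma~\ref{lemma:genres} upgrades to $m_1(0)=\dim\textnormal{Res}_1(0)$, so $\mathrm{ord}_{s=0}\,\zeta=-n(M,X)$ exactly and $s^{n(M,X)}\zeta(s)\big|_{s=0}\neq 0$. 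I do not anticipate any serious obstacle: the deep inputs (Theorem~\ref{thm:dimension} and Lemma~\ref{lemma:genres}) are already in place, and the only new calculation—the unconditional $0$- and $2$-semisimplicity—is essentially an integration-by-parts and Stokes argument using $\mathcal L_X\Omega=0$, consistent with the ``easy to establish'' remark following Definition~\ref{definition:semisimple}.
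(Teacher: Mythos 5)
Your proposal is correct and follows essentially the same route as the paper: the order of vanishing at $0$ is read off from the factorisation \eqref{factorisationruelle} as $m_1(0)-m_0(0)-m_2(0)$, the equalities $m_0(0)=m_2(0)=1$ come from $0$- and $2$-semisimplicity together with Theorem \ref{thm:dimension}, and $m_1(0)\geq\dim\textnormal{Res}_1(0)$ (with equality under $1$-semisimplicity) via Lemma \ref{lemma:genres} gives both claims. The only cosmetic difference is your $2$-semisimplicity argument (writing $du=f\Omega$ and applying Stokes), whereas the paper factors $u=s\,\omega$ and reduces to the $k=0$ case; both are valid.
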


The Ruelle zeta function for the suspension of a hyperbolic toral automorphism $A \in SL(2, \mathbb{Z})$ is equal to $\zeta(s) = \frac{(e^{-s} - \lambda)(e^{-s} - \frac{1}{\lambda})}{(e^{-s} - 1)^2}$, where $\lambda$ and $\frac{1}{\lambda}$ are eigenvalues of $A$. This has a pole of order $2$ at $s=0$ which of course matches the computation in Corollary \ref{corollary:zeta} since $b_{1}(M)=1$. However the corollary asserts that {\it any} other volume preserving non null-homologous Anosov flow on $M$ will have $\zeta$ with the same behaviour at $s=0$ since 1-semisimplicity holds trivially given that $\textnormal{Res}_1(0)$ is zero dimensional.
An interesting class of Anosov flows with $[\omega]\neq 0$ is given in \cite{BL18}. These examples have a transverse torus, but they are not conjugate to suspensions. We do not know if they are 1-semisimple.

Magnetic flows are also examples to which the previous corollary applies.
They are null-homologous (cf. Section \ref{section:ex}), but they are generically {\it not} contact, (cf. \cite{DP})  hence they were not covered by the main result 
in \cite{DyZw17}.  In this setting, magnetic flows can be described by a vector field of the form $X+(\lambda\circ\pi) V$, where $X$ is the geodesic vector field, $V$ the vertical vector field of the circle fibration $\pi:S\Sigma\to \Sigma$, and $\lambda\in C^{\infty}(\Sigma)$ (here $M=S\Sigma$, the unit circle bundle of the orientable surface $\Sigma$). They are volume preserving since $X$ and $V$ preserve the canonical volume form. Suppose the geodesic flow is Anosov. Thanks to item (1) in Theorem \ref{thm:surprise}, if $\lambda$ is small enough, the magnetic flows remain Anosov and 1-semisimple and hence the order of vanishing of the zeta function at zero is the same as for Anosov geodesic flows, namely $-\chi(\Sigma)$.

The last statement in Theorem \ref{thm:surprise} and Theorem \ref{thm:perturbationflow} have consequences for the zeta function. The failure of 1-semisimplicity means that $m_{1}(0)\geq b_{1}(M)+1$ and hence the order of vanishing at zero of the zeta function is {\it strictly bigger} than that of the geodesic flow case. Hence time changes can a priori produce alterations in the properties of $\zeta$ near zero. Similarly the cohomology class $[\omega]$ can also produce alterations.
For the particular construction of Theorem \ref{thm:surprise} we do not know the precise order of vanishing at zero.

\begin{corollary}\label{cor:zeta'}
	The order of vanishing of the zeta function $\zeta_{X_\varepsilon}(s)$ of the flow $X_\varepsilon$ from Theorem \ref{thm:perturbationflow} at zero, for $\varepsilon \neq 0$, is equal to $b_{1}(M)-3$. Moreover, for the time-change $fX$ of the geodesic flow on the hyperbolic surface constructed in Theorem \ref{thm:localresonance}, the order of vanishing is greater than or equal to $-\chi(\Sigma)+ 1$.
\end{corollary}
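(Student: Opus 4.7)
The plan is to feed both statements through the identity
\[\mathrm{ord}_{s=0}\zeta(s) \;=\; m(0) \;=\; m_1(0) - m_0(0) - m_2(0),\]
using Theorem \ref{thm:dimension} together with the fact that $0$- and $2$-semisimplicity always hold, so $m_0(0)=m_2(0)=1$. The only datum that differs between the two claims is the value of $m_1(0)$.

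For the first claim, I would start by checking that $[\omega_\varepsilon]\neq 0$ for $\varepsilon\neq 0$. Since $X$ is contact with $\Omega=-\alpha\wedge d\alpha$, $X$ lies in $\mathcal X^{0}_{\Omega}$ (indeed $\iota_X\Omega=-d\alpha$ by Cartan's formula, using $\iota_X\alpha=1$ and $\iota_Xd\alpha=0$), hence
\[[\omega_\varepsilon]\;=\;\varepsilon[\iota_Y\Omega]\;\neq\; 0\]
since $Y\notin\mathcal X^{0}_{\Omega}$. Theorem \ref{thm:perturbationflow} provides $1$-semisimplicity of $X_\varepsilon$, and Theorem \ref{thm:dimension}(2) then gives $m_1(0)=\dim\textnormal{Res}_1(0)=b_1(M)-1$. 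The splitting resonance produced by Theorem \ref{thm:perturbationflow} sits at $-i\lambda_\varepsilon\neq 0$ and so does not contribute to $m_1(0)$. Combining, $m(0)=(b_1(M)-1)-2=b_1(M)-3$, as claimed and in agreement with Corollary \ref{corollary:zeta}.

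For the second claim, as noted in the discussion after Theorem \ref{thm:dimension}, being null-homologous and having nonzero helicity are invariant under time changes; hence the time-change $fX$ on $M=S\Sigma$ falls in case (3) of Theorem \ref{thm:dimension} with $\mathcal H\neq 0$, giving $\dim\textnormal{Res}_1(0)=b_1(S\Sigma)$. A Gysin sequence computation for the circle bundle $S\Sigma\to\Sigma$, whose Euler number $\chi(\Sigma)$ is nonzero, yields $b_1(S\Sigma)=b_1(\Sigma)=-\chi(\Sigma)+2$. Since Theorem \ref{thm:localresonance} asserts that this particular $fX$ is \emph{not} $1$-semisimple, the Jordan block structure of $-\Lapl_{fX}$ on $\ker\iota_{fX}$ at zero (encoded by Lemma \ref{lemma:genres}) forces at least one generalized resonant state beyond $\textnormal{Res}_1(0)$; thus $m_1(0)\geq \dim\textnormal{Res}_1(0)+1=-\chi(\Sigma)+3$, and so
\[m(0)\;\geq\;-\chi(\Sigma)+3-2\;=\;-\chi(\Sigma)+1.\]

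I do not expect a serious analytic obstacle: the argument simply repackages Theorems \ref{thm:dimension}, \ref{thm:localresonance}, \ref{thm:perturbationflow} together with the equality $m_1(0)=\dim\textnormal{Res}_1(0)$ that holds under $1$-semisimplicity. The subtlest point is to verify that the splitting resonance of Theorem \ref{thm:perturbationflow} sits strictly away from $\lambda=0$, so that for $\varepsilon\neq 0$ small only the resonance at zero contributes to $\mathrm{ord}_{s=0}\zeta$.
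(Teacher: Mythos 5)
Your proposal is correct and follows essentially the same route as the paper: both parts reduce to $m(0)=m_1(0)-m_0(0)-m_2(0)$ via the factorisation, with $m_0(0)=m_2(0)=1$, and then identify $m_1(0)=b_1(M)-1$ for $X_\varepsilon$ (the paper gets this from the splitting resonance plus local constancy of multiplicities, you get it equivalently from $[\omega_\varepsilon]\neq 0$ plus $1$-semisimplicity) and $m_1(0)\geq b_1(S\Sigma)+1$ for the non-semisimple time change. The only point worth keeping explicit is the one you already flag: failure of semisimplicity forces $J(0)\geq 2$, hence a generalized resonant state beyond $\textnormal{Res}_1(0)$, which is exactly the paper's argument.
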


\subsection{Flat unitary twists}It is possible (and natural)  to introduce a unitary twist in the discussion above.
Consider $(M,\Omega)$ a closed 3-manifold with volume form $\Omega$ and $X$ a volume preserving Anosov vector field. Let $\mathcal{E}$ a Hermitian vector bundle over $M$, equipped with a unitary connection $A$. 
We consider $\mathcal{D}'_{E_u^*}(M; \Omega^k\otimes \mathcal E)$ the space of distributions with values in the bundle of $\mathcal E$-valued exterior $k$-forms and with wave front set contained in $E_{u}^*$.
We replace the exterior differential $d$ by the covariant derivative $d_{A}$ (induced by the connection $A$) acting on $\mathcal E$-valued
differential forms. 
Thus we can define resonant spaces
\[\textnormal{Res}_{k,A}(0):= \{u \in \mathcal{D}'_{E_u^*}(M; \Omega^k\otimes\mathcal E):\; \iota_{X}u=0,\;\iota_{X}d_{A}u=0\}.\]
We shall compute the dimensions of these spaces in analogy to Theorem \ref{thm:dimension} under the assumption that $A$ is {\it flat and unitary}, i.e. $d_{A}^2=0$ and $d_A$ is compatible with the Hermitian inner product on $\E$. Recall that flat unitary connections are in 1-1 correspondence with
representations of $\pi_{1}(M)$ into the unitary group.
Under this condition, one can define twisted Betti numbers $b_{i}(M,\mathcal E)$ in the standard way (we note that these numbers may depend on $A$).
The upshot is a theorem similar to Theorem \ref{thm:dimension} where the Betti numbers $b_{i}(M)$
are replaced by $b_{i}(M,\mathcal E)$, cf. Theorem \ref{thm:dimensionA} for the full statement.
With this information in hand we can study a {\it twisted Ruelle zeta function}:

\begin{align}\label{DefRuelleZetaA}
\zeta_{A} (s):= \prod_{\gamma \in \mathcal{G}_0} \det{\big(\id - \alpha_\gamma e^{-sl_\gamma}\big)}.
\end{align}

Here, given a point $x_0$ on $\gamma \in \mathcal{G}_0$, we denote by $\alpha_\gamma$ the parallel transport map (i.e. an element of the holonomy group) along the loop determined by $\gamma$. It is easy to check that the product is independent of the choice of $x_0$ on $\gamma$, as this amounts to conjugating $\alpha_\gamma$ by a linear map. Note that if $\mathcal{E} = M \times \mathbb{C}$ and $d_A = d$, the expression in \eqref{DefRuelleZetaA} reduces to that in \eqref{DefRuelleZeta}. 

If the connection $A$ is flat, we recover the definition of the twisted Ruelle zeta function considered by Fried \cite{F1}; also by Adachi and Sunada \cite{A88, AS87}, who called functions of this type $\emph{L-functions}$ in analogy with number theory.  Fried conjectured that the coefficient at zero of $\zeta_{A}$ for an acyclic connection (i.e. one that has vanishing Betti numbers) is related to the analytic torsion, but proved it only for hyperbolic manifolds. For recent progress on this conjecture and more information, see \cite{DGRS18,Shen,Z}.

The notion of semisimplicity extends naturally to the twisted case (just replace $d$ by $d_{A}$ in Definition \ref{definition:semisimple}). In that case we will say a flow $\varphi_t$ or $X$ is $1$-semisimple with respect to $d_A$. Putting everything together we shall derive the following corollary:

\begin{corollary}\label{corollary:zetaA} Let $(M,\Omega)$ be a closed 3-manifold with a volume preserving Anosov flow $\varphi_t$ whose stable and unstable bundles are orientable.  Let $\mathcal E$ be a Hermitian vector bundle equipped with a unitary flat connection $A$. Then
\[s^{n(M,X,A)}\zeta_{A}(s)\]
is holomorphic close to zero, where
\begin{align*}
n(M,X,A)&=3b_{0}(M,\mathcal E)-b_{1}(M,\mathcal E),\;\text{if}\;\;[\omega]\neq 0,\\
n(M,X,A)&=2b_{0}(M,\mathcal E)-b_{1}(M,\mathcal E),\;\text{if}\;\;[\omega]= 0,\;and\;\,\mathcal H (X)\neq 0,\\
n(M,X,A)&=b_{0}(M,\mathcal E)-b_{1}(M,\mathcal E),\;\text{if}\;\;[\omega]=0\;and\;\,\mathcal H (X)= 0.\\
\end{align*}
Moreover, if $X$ is $1$-semisimple with respect to $d_A$, then $s^{n(M,X,A)}\zeta_{A}(s)|_{s=0}\neq 0$.
\end{corollary}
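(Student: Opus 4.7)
The proof will mirror that of Corollary \ref{corollary:zeta}, upgraded to the twisted setting. My plan is to first exploit a factorisation of $\zeta_A$ analogous to \eqref{factorisationruelle},
\[\zeta_A(s)=\frac{\zeta_{1,A}(s)}{\zeta_{0,A}(s)\,\zeta_{2,A}(s)},\]
where each $\zeta_{k,A}$ is entire with order of vanishing at $s$ equal to $m_{k,A}(is)$, the twisted Pollicott--Ruelle resonance multiplicity on $\mathcal E$-valued $k$-forms. This factorisation should follow by repeating the microlocal argument of \cite{DyZw16} with the covariant derivative $d_A$ in place of $d$: the principal symbol of $d_A$ agrees with that of $d$, so the wavefront and resolvent estimates carry over verbatim, while the holonomies $\alpha_\gamma$ enter the Guillemin trace in exactly the way needed to reproduce the product \eqref{DefRuelleZetaA}.

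From the factorisation the order of vanishing of $\zeta_A$ at zero reads
\[\text{\rm ord}_0\zeta_A=m_{1,A}(0)-m_{0,A}(0)-m_{2,A}(0),\]
so it suffices to show $m_{0,A}(0)+m_{2,A}(0)-m_{1,A}(0)\leq n(M,X,A)$ with equality under $1$-semisimplicity with respect to $d_A$. First I would verify that $k$-semisimplicity is automatic in degrees $k=0$ and $k=2$ in the twisted setting. For $k=0$, if $u\in\mathcal D'_{E_u^*}(M;\mathcal E)$ satisfies $\iota_X d_A u\in\textnormal{Res}_{0,A}(0)$, then $\nabla_X u$ lies in a finite dimensional space of sections annihilated by $\nabla_X$; testing against an $X$-parallel local section and using unitarity of $A$ along the flow forces $\nabla_X u=0$. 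The case $k=2$ should reduce to $k=0$ via twisted Poincar\'e duality on $M$, which interchanges $E_u^*$ and $E_s^*$ and replaces $A$ by its dual flat unitary connection $A^*$; this identification will already be in place from the proof of Theorem \ref{thm:dimensionA}.

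Combining these equalities with the general bound $m_{1,A}(0)\geq\dim\textnormal{Res}_{1,A}(0)$ (the twisted analogue of Lemma \ref{lemma:genres}), which is an equality precisely when $X$ is $1$-semisimple with respect to $d_A$, gives
\[\text{\rm ord}_0\zeta_A\geq\dim\textnormal{Res}_{1,A}(0)-\dim\textnormal{Res}_{0,A}(0)-\dim\textnormal{Res}_{2,A}(0).\]
Substituting the twisted dimension count from Theorem \ref{thm:dimensionA} (in which $b_i(M)$ is replaced by $b_i(M,\mathcal E)$ and the degrees $0$ and $2$ each contribute $b_0(M,\mathcal E)$), a case-by-case check in the three regimes $[\omega]\neq 0$, $[\omega]=0$ with $\mathcal H(X)\neq 0$, and $[\omega]=0$ with $\mathcal H(X)=0$ produces exactly $-n(M,X,A)$ on the right-hand side. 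Hence $s^{n(M,X,A)}\zeta_A(s)$ is holomorphic near zero, with nonzero value there if and only if $X$ is $1$-semisimple with respect to $d_A$.

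The main obstacle I anticipate is the careful transcription of the microlocal factorisation from \cite{DyZw16} to the twisted setting: although the local analytic estimates carry over immediately, one must track the holonomies $\alpha_\gamma$ through the Guillemin trace formula and verify that the resulting functions $\zeta_{k,A}$ are entire with the asserted orders of vanishing. By contrast, the $0$- and $2$-semisimplicity statements together with the case analysis of the twisted dimension count are routine adaptations of the tools developed for Theorems \ref{thm:dimension} and \ref{thm:dimensionA}.
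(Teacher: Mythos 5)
Your proposal is correct and follows essentially the same route as the paper: the twisted factorisation $\zeta_A=\zeta_{1,A}/(\zeta_{0,A}\zeta_{2,A})$ is exactly Corollary \ref{cor:3dfactor}, the $0$- and $2$-semisimplicity and the dimension count are Theorem \ref{thm:generalresonances}, and the conclusion follows from $m_{1}(0)\geq\dim\textnormal{Res}_{1,A}(0)$ with equality under $1$-semisimplicity. The only slight overstatement is your final ``if and only if'': the corollary asserts only that $1$-semisimplicity implies non-vanishing of $s^{n(M,X,A)}\zeta_A(s)$ at $s=0$, and your argument as written establishes precisely that implication.
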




A particular instance of the corollary arises when we consider $A$ to be the pull-back of a flat connection on a surface $\Sigma$. In this case it is easy to check that (cf. Lemma \ref{lemma:twistedeuler}):
\[2b_0(M, \mathcal{E}) - b_1(M, \mathcal{E})=\text{rank}(\mathcal E)\,\chi(\Sigma).\]
Thus:

\begin{corollary}\label{thm:pullback}
Let $\mathcal{E}$ be a Hermitian vector bundle over an oriented closed  Riemannian surface $(\Sigma,g)$, equipped with a unitary flat connection $A$. We consider $M = S\Sigma$ with footpoint map $\pi$ and any Anosov flow, $1$-semisimple with respect to $d_{\pi^*A}$, null-homologous with non-zero helicity, preserving the volume form of $S\Sigma$. We consider the pullback bundle $\pi^*\mathcal E$ with the pull-back connection $\pi^*A$. Then in a neighbourhood of zero we have $s^{\rk(\mathcal{E}) \cdot \chi(\Sigma)} \cdot \zeta_{\pi^*A}(s)$ holomorphic, such that
\begin{equation*}
    s^{\rk(\mathcal{E}) \cdot \chi(\Sigma)} \cdot \zeta_{\pi^*A}(s) |_{s = 0} \neq 0.
\end{equation*}
\end{corollary}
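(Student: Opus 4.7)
The plan is to invoke Corollary \ref{corollary:zetaA} directly, applied to the pullback bundle $\pi^*\mathcal E$ equipped with the pullback connection $\pi^*A$. First I would verify its hypotheses: the pullback $\pi^*A$ is automatically flat ($d_{\pi^*A}^2 = \pi^* d_A^2 = 0$) and unitary for the pulled-back Hermitian metric, so it defines a valid twist. Since by hypothesis $X$ is null-homologous with $\mathcal H(X)\neq 0$, the middle case of Corollary \ref{corollary:zetaA} applies and yields
\[
n(M, X, \pi^*A) \;=\; 2 b_0(S\Sigma, \pi^*\mathcal E) - b_1(S\Sigma, \pi^*\mathcal E).
\]

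Next I would apply Lemma \ref{lemma:twistedeuler} to rewrite the right-hand side as $\mathrm{rk}(\mathcal E)\cdot \chi(\Sigma)$. The underlying reason, which I would sketch in detail if that lemma were not already available, is as follows: the pullback representation of $\pi_1(S\Sigma)$ factors through $\pi_1(\Sigma)$ via $\pi_*$, so flat sections coincide, giving $b_0(S\Sigma, \pi^*\mathcal E)=b_0(\Sigma, \mathcal E)$. For $b_1$ one uses the Gysin (or Leray--Serre) spectral sequence for the circle bundle $S^1 \to S\Sigma \to \Sigma$ with twisted coefficients; when $\chi(\Sigma)\neq 0$, cup-product with the Euler class realises an isomorphism $H^0(\Sigma, \mathcal E) \to H^2(\Sigma, \mathcal E)$ (via Poincar\'e duality on $\Sigma$), collapsing the sequence to $b_1(S\Sigma, \pi^*\mathcal E)=b_1(\Sigma, \mathcal E)$. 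Combined with the identity $2b_0(\Sigma, \mathcal E) - b_1(\Sigma, \mathcal E) = \mathrm{rk}(\mathcal E)\chi(\Sigma)$ on $\Sigma$ (twisted Euler characteristic plus $b_0=b_2$), this gives the claim; the case $\chi(\Sigma)=0$ is treated separately and directly.

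Finally, the assumption that $X$ is $1$-semisimple with respect to $d_{\pi^*A}$ feeds straight into the \emph{moreover} clause of Corollary \ref{corollary:zetaA}, giving the non-vanishing $s^{\mathrm{rk}(\mathcal E)\chi(\Sigma)} \zeta_{\pi^*A}(s)|_{s=0}\neq 0$.

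The only point I would flag is that Corollary \ref{corollary:zetaA} tacitly requires $E_s, E_u$ to be orientable; if this fails for the chosen Anosov flow on $S\Sigma$, one passes to an oriented two-cover and compares Ruelle zeta functions, but I do not expect this to present a genuine obstacle. In short, the present corollary is essentially an unpacking of the middle case of Corollary \ref{corollary:zetaA} combined with Lemma \ref{lemma:twistedeuler}; the only non-routine ingredient is the twisted Betti-number computation for the circle bundle, which is where the multiplicativity of the Euler characteristic (via the Gysin sequence) does the geometric work.
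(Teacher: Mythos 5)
Your proof is correct and follows essentially the same route as the paper: apply Corollary \ref{corollary:zetaA} in the null-homologous, non-zero-helicity case, then reduce $2b_0(S\Sigma,\pi^*\mathcal E)-b_1(S\Sigma,\pi^*\mathcal E)$ to $\rk(\mathcal E)\chi(\Sigma)$ via the pullback isomorphisms on $H^0$ and $H^1$ together with the twisted Euler characteristic on $\Sigma$.

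One small point of precision: invoking Lemma \ref{lemma:twistedeuler} on $S\Sigma$ itself only yields $\chi_{\pi^*A}(S\Sigma;\pi^*\mathcal E)=\rk(\mathcal E)\chi(S\Sigma)=0$, which is vacuous; what you actually need is Lemma \ref{lemma:twistedeuler} applied to $\Sigma$ combined with the isomorphism $H^1_A(\Sigma;\mathcal E)\cong H^1_{\pi^*A}(S\Sigma;\pi^*\mathcal E)$ for $\chi(\Sigma)\neq 0$, which is precisely the paper's Lemma \ref{lemma:H^1}. Your sketch of the ``underlying reason'' is exactly this (flat sections pull back for $H^0$, Gysin sequence for $H^1$), so your understanding is sound — you have effectively re-derived Lemma \ref{lemma:H^1} by a Gysin argument rather than citing it, and the paper itself remarks (Remark after Lemma \ref{lemma:H^1}) that this is an equivalent alternative to its explicit computation. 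Note finally that the cases $\chi(\Sigma)\geq 0$ you set aside are vacuous here, since $S(S^2)=\mathbb{RP}^3$ and $S(T^2)=T^3$ carry no Anosov flows (their fundamental groups have polynomial growth), so there is no separate case to treat.
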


We remark that Corollary \ref{thm:pullback} applies in particular to contact flows, since for those $1$-semisimplicity holds with respect to any flat and unitary $d_A$.

\bigskip

This paper is organised as follows. Section \ref{section:prelim} gives preliminary information, recalls the Pollicott-Ruelle resonances and proves some necessary lemmas. In Section \ref{section:zeta} we recall the factorisation of the twisted zeta function in terms of some traces of operators on $\mathcal E$-valued $k$-forms. In Section \ref{section:resonances}, we compute the dimension of the resonant spaces $\textnormal{Res}_{k,A}(0)$ and obtain Theorem \ref{thm:dimension} as a particular case.
Corollary \ref{corollary:zetaA} is also proved in this section.
Section \ref{section:ex} gives examples and develops material needed for the study of time changes.
Section \ref{section:perturbations} discusses perturbations and proves the main result needed for items (1) and (2) in Theorem \ref{thm:surprise}. Theorem \ref{thm:perturbationflow} is proved in Section \ref{section:closetocontact}. Finally, Section \ref{section:timechange} exhibits a time change of the geodesic flow of a hyperbolic surface for which 1-semisimplicity fails, thus completing the proof of Theorem \ref{thm:surprise}.


\medskip

\noindent {\bf Acknowledgements.} We would like to thank Semyon Dyatlov, Colin Guillarmou, Charles Hadfield and Maciej Zworski for very helpful conversations. MC would like to thank Maciej Zworski for the warm hospitality during his visit at UC Berkeley. We would also like to thank the referee for helpful remarks and corrections.

MC was supported by Trinity College (Cambridge),  the Max-Planck Institute (Bonn) and the European Research Council (ERC) under the European Union’s Horizon 2020 research and innovation programme
(grant agreement No. 725967).  GPP was supported by EPSRC grant EP/R001898/1. 

This work was partially supported by the National Science Foundation under Grant No. DMS-1440140 while the authors were in residence at the Mathematical Sciences Research Institute in Berkeley, California, during the Fall 2019 semester.

\section{Preliminary results}\label{section:prelim} In this section we review the necessary tools to prove the results stated in the introduction. In particular, we recall the Pollicott-Ruelle resonances and put forward some preparatory lemmas.

\subsection{Microlocal analysis}
Here we outline the microlocal tools necessary for our proofs. For more information on distribution spaces and properties of wavefront sets see \cite[Chapter 7]{GrSj94} or \cite[Chapters 6,8]{HoI+II} and for more about pseudodifferential operators \cite[Chapter 3]{GrSj94} or \cite[Chapter 18]{HoIII+IV}. 

Let $M$ be a closed manifold and $\mathcal{E}$ a smooth complex vector bundle. We consider the space of infinitely differentiable smooth sections and the space of distributional sections, respectively:
\[C^\infty(M; \mathcal{E}) \quad \text{ and } \quad \mathcal{D}'(M; \mathcal{E}).\]
We recall the notion of the \emph{wavefront set} of a distribution, which keeps track of the directional singularities. Given $u \in \mathcal{D}'(\mathbb{R}^n)$, then $(x, \xi) \not\in WF(u) \subset T^*\mathbb{R}^n \setminus 0 = \mathbb{R}^n \times \big(\mathbb{R}^n \setminus 0\big)$ if there exists $\varphi \in C_0^\infty(\mathbb{R}^n)$ with $\varphi(x) \neq 0$ and an open conical neighbourhood $U$ of $\xi$, such that
\[|\widehat{\varphi u}(\eta)| = O\big(\langle{\eta}\rangle^{-\infty}\big)\]
for $\eta \in U$. Here we denote $\langle{\eta}\rangle = (1 + |\eta|^2)^\frac{1}{2}$ and by $O(\langle{\eta}\rangle^{-\infty})$ we mean an expression bounded by $C_N \langle{\eta}\rangle^{-N}$ for every $N$. A vector valued distribution $u \in \mathcal{D}'(\mathbb{R}^n; \mathbb{R}^m)$ for some $m \in \mathbb{N}$ may be identified with a vector $u = (u_1, \dotso, u_m)$ with $u_i \in \mathcal{D}'(\mathbb{R}^n)$. Then
\[WF(u) := \cup_{i = 1}^m WF(u_i).\]
It is standard that these definitions are coordinate invariant, so for $u \in \mathcal{D}'(M; \mathcal{E})$ we have
\[WF(u) \subset T^*M\setminus 0.\]
It is moreover true that for any pseudodifferential operator $A$, we have
\[WF(Au) \subset WF(A) \cap WF(u) \subset WF(u),\]
a fact that will be used later on. Then, we introduce for a closed conic set $\Gamma \subset T^*M \setminus 0$ the space
\[\mathcal{D}'_{\Gamma}(M; \mathcal{E}) = \{u \in \mathcal{D}'(M; \mathcal{E}) \mid WF(u) \subset \Gamma\}.\]
Note that by the above relation on wavefront sets, the spaces $\mathcal{D}'_\Gamma(M; \mathcal{E})$ are invariant under the action of pseudodifferential operators.
\subsection{Pollicott-Ruelle resonances}
Let us now quickly recall the microlocal approach to Pollicott-Ruelle resonances, as in \cite{DyZw17}. Let $M$ be a compact smooth manifold without boundary and $X$ be a smooth vector field. We assume that the flow $\varphi_{t}$ of $X$ is Anosov, i.e that there is a splitting of the tangent space
\[T_x M = \mathbb{R} X(x) \oplus E_u(x) \oplus E_s(x)\]
for each $x \in M$, where $E_u(x)$ and $E_s(x)$ depend continuously on $x$ and are invariant under the flow and moreover, that for some constants $C, \nu > 0$ and a fixed metric on $M$,
\begin{align*}
|d\varphi_{t}(x) \cdot v| \leq Ce^{-\nu |t|} \cdot |v|, \quad \begin{cases} t \geq 0,\quad v \in E_s(x)\\
t \leq 0,\quad v \in E_u(x).
\end{cases}
\end{align*}

We call $E_s(x)$ the \emph{stable} bundle or direction and $E_u(x)$ the \emph{unstable} bundle or direction. It is a well-known fact that the geodesic flow on the unit tangent bundle $M = SN$ for $N$ with negative sectional curvature is Anosov. 

Let us define the spaces $E_0^*(x), E_u^*(x), E_s^*(x)$ as the duals of $E_0(x) := \mathbb{R} X(x), E_s(x), E_u(x)$ respectively. Explicitly, $E_u^*(x)$ is the annihilator of $\mathbb{R}X(x) \oplus E_u(x)$, $E_s^*(x)$ is the annihilator of $\mathbb{R}X(x) \oplus E_s(x)$ and $E_0^*(x)$ is the annihilator of $E_{s}(x) \oplus E_u(x)$.
The continuous vector bundle $E_u^* := \cup_{x \in M} E_u^*(x) \subset T^*M$ is a closed conic subset.

Let us consider a complex vector bundle $\mathcal{E}$ over $M$, equipped with a connection $A$ (which defines the covariant derivative $d_{A}$) and a smooth potential $\Phi$ (section of the endomorphism bundle of $\mathcal{E}$). This defines a first order operator
\begin{align}\label{eq:op}
P = -i\iota_{X}d_{A} + \Phi
\end{align}
acting on sections of $\mathcal{E}$, denoted by $C^\infty(M; \mathcal{E})$. Later on we will dispense with $\Phi$, but for the moment it can be included without trouble.

For $\lambda \in \mathbb{C}$ with sufficiently large $\im{\lambda} > C_0 > 0$, we have the integral
\begin{align}\label{eq:laplace}
R(\lambda) := i \int_0^\infty e^{i \lambda t} e^{-i t P}dt: L^2(M; \mathcal{E}) \to L^2(M; \mathcal{E})
\end{align}
converges and defines a bounded operator, holomorphic in $\lambda$ and moreover, $R(\lambda) = (P - \lambda)^{-1}$ on $L^2$. The propagator $e^{itP}$ is defined by solving the appropriate first order PDE and the constant $C_0$ depends on $P$.

Faure and Sj\"ostrand \cite{FaSj11} (see also \cite{DyZw16}) proved that the operator $R(\lambda)$ has a meromorphic extension to the entire complex plane 
\begin{align}
    R(\lambda): C^\infty(M; \mathcal{E}) \to \mathcal{D}'(M; \mathcal{E})
\end{align}
for $\lambda \in \mathbb{C}$ and the poles of this continuation are the \emph{Pollicott-Ruelle resonances}.



We proceed to define the multiplicity of a Pollicott-Ruelle resonance $\lambda_0$. By definition, there is a Laurent expansion of $R(\lambda)$ at $\lambda_0$ (cf. \cite[Appendix C]{DyZwbook})
\begin{align}\label{expansion}
    R(\lambda) = R_H(\lambda) - \sum_{j = 1}^{J(\lambda_0)} \frac{(P - \lambda_0)^{j - 1} \Pi}{(\lambda - \lambda_0)^j}, \quad \Pi, R_H(\lambda): \mathcal{D}'_{E_u^*}(M; \mathcal{E}) \to \mathcal{D}'_{E_u^*}(M; \mathcal{E})
\end{align}
where $R_H(\lambda)$ is the holomorphic part at $\lambda_0$ and $\Pi = \Pi_{\lambda_0}$ is a finite rank projector given by
\begin{equation}\label{eq:projector}
\Pi_{\lambda_0} = \frac{1}{2\pi i} \oint_{\lambda_0} (\lambda - P)^{-1} d\lambda.
\end{equation}
Here, the integral is along a small closed loop around $\lambda_0$ and it can be easily checked that $\Pi_{\lambda_0}^2 = \Pi_{\lambda_0}$, $[\Pi_{\lambda_0}, P] = 0$. The fact that $R_H(\lambda)$ and $\Pi$ can be extended to continuous operators on $\mathcal{D}'_{E_u^*}$ follows from the restrictions on the wave front sets given in \cite[Proposition 3.3]{DyZw16} and \cite[Theorem 7.8]{GrSj94}. The \emph{Pollicott-Ruelle multiplicity} of $\lambda_0$, denoted by $m_P(\lambda_0)$ is defined as the dimension of the range of $\Pi_{\lambda_0}$. 

By applying $P - \lambda$ to \eqref{expansion}, we obtain $(P-\lambda_0)^{J(\lambda_0)} \Pi_{\lambda_0} = 0$ and so $\ran \Pi_{\lambda_0} \subset \ker (P - \lambda_0)^{J(\lambda_0)}$. The elements of $\ran \Pi_{\lambda_0}$ are called \emph{generalised resonant states} and we will denote, for $j \in \mathbb{N}$
\begin{equation}
	\res^{(j)}_{P}(\lambda_0) = \{u \in \mathcal{D}'_{E_u^*}(M; \E) : (P - \lambda_0)^j u = 0\}.
\label{eq:resj}
\end{equation}
We also write
\[\res_{P}(\lambda_0) = \{u \in \mathcal{D}'_{E_u^*}(M; \E) : (P - \lambda_0)^{J(\lambda_0)} u = 0\}.\]
In fact, we may show that $\res_P(\lambda_0)$ is equal to the range of $\Pi_{\lambda_0}$ and we may think of $J(\lambda_0)$ as the size of the largest Jordan block. 
\begin{lemma}\label{lemma:genres}
	Let $u \in \mathcal{D}'_{E_u^*}(M; \E)$ be such that $(P - \lambda_0)^{j_0} u = 0$ with $j_0 \in \mathbb{N}_0$ the minimal such number. Then $j_0 \leq J(\lambda_0)$, $\Pi_{\lambda_0} u = u$ and $\ker (P - \lambda_0)^{J(\lambda_0)} = \ran \Pi_{\lambda_0}$. 
\end{lemma}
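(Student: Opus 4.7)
My plan is to exploit the Laurent expansion \eqref{expansion} together with the identity $R(\lambda) = (P-\lambda)^{-1}$ (valid in a punctured neighbourhood of $\lambda_0$) to compute $R(\lambda) u$ explicitly when $u$ is a generalised resonant state, and to extract all three statements from a single residue computation.

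Set $u_m := (P-\lambda_0)^m u$ for $m = 0,\ldots,j_0$, so that $u_0 = u$ and $u_{j_0} = 0$. I would introduce the meromorphic family
\begin{equation*}
w(\lambda) := -\sum_{m=0}^{j_0-1} \frac{u_m}{(\lambda-\lambda_0)^{m+1}} \in \mathcal{D}'_{E_u^*}(M; \E),
\end{equation*}
decompose $P - \lambda = (P-\lambda_0) - (\lambda-\lambda_0)$, and verify by a direct telescoping computation that $(P-\lambda)\,w(\lambda) = u_0 - \tfrac{u_{j_0}}{(\lambda-\lambda_0)^{j_0}} = u$. Since $P - \lambda$ has trivial kernel on $\mathcal{D}'_{E_u^*}(M;\E)$ in a punctured neighbourhood of $\lambda_0$, this forces $R(\lambda) u = w(\lambda)$ as meromorphic families there.

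The three claims then fall out. For the bound $j_0 \leq J(\lambda_0)$, minimality of $j_0$ gives $u_{j_0-1} \neq 0$, so $w(\lambda)$ has a pole of order exactly $j_0$ at $\lambda_0$; comparing with \eqref{expansion}, whose pole has order at most $J(\lambda_0)$, yields the inequality. For $\Pi_{\lambda_0} u = u$, only the $m = 0$ term of $w(\lambda)$ contributes to the residue, and \eqref{eq:projector} gives
\begin{equation*}
\Pi_{\lambda_0} u = \frac{1}{2\pi i} \oint_{\lambda_0} (\lambda - P)^{-1} u\, d\lambda = -\frac{1}{2\pi i} \oint_{\lambda_0} w(\lambda)\, d\lambda = u.
\end{equation*}
For the subspace equality, the inclusion $\ran \Pi_{\lambda_0} \subset \ker (P-\lambda_0)^{J(\lambda_0)}$ reduces to $(P-\lambda_0)^{J(\lambda_0)} \Pi_{\lambda_0} = 0$, which I would derive by applying $P-\lambda$ to \eqref{expansion}: the same telescoping leaves only $\tfrac{(P-\lambda_0)^{J(\lambda_0)}\Pi_{\lambda_0}}{(\lambda-\lambda_0)^{J(\lambda_0)}}$ as a potentially singular remainder, and its coefficient must vanish because $(P-\lambda) R(\lambda) = \id$ is holomorphic at $\lambda_0$. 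The reverse inclusion is immediate from $\Pi_{\lambda_0} u = u$ applied to any element of $\ker(P-\lambda_0)^{J(\lambda_0)}$.

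The only delicate point I foresee is in justifying that the operator identities ($R(\lambda) = (P-\lambda)^{-1}$, the meromorphy of $\lambda \mapsto R(\lambda) u$, and the continuity of $\Pi_{\lambda_0}$) persist on the space $\mathcal{D}'_{E_u^*}(M; \E)$ of distributional sections; this is taken care of by the Faure--Sj\"ostrand anisotropic Sobolev framework together with the wavefront restrictions already recalled from \cite[Proposition 3.3]{DyZw16}. Once these are in hand the lemma is essentially a formal residue computation on the finite-dimensional cyclic subspace generated by $u$.
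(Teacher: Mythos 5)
Your argument is correct, and it reorganizes the paper's proof in a way worth noting. The paper works only with the terminal state $v=(P-\lambda_0)^{j_0-1}u$ of the Jordan chain, for which $R(\lambda)v=(\lambda_0-\lambda)^{-1}v$, and then obtains $j_0\le J(\lambda_0)$ by contradiction and $\Pi_{\lambda_0}u=u$ by induction on $j_0$, using the commutation $[\Pi_{\lambda_0},P]=0$. You instead compute $R(\lambda)u$ explicitly on the whole cyclic subspace via the telescoping identity $(P-\lambda)w(\lambda)=u$, and read off all three assertions from a single comparison of principal parts and one residue; this removes both the contradiction and the induction, at the cost of having to justify the uniqueness step $R(\lambda)u=w(\lambda)$. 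That step is the one genuinely delicate point, and you handle it the same way the paper does: since $u_m\in\mathcal{D}'_{E_u^*}(M;\E)\cap H^{-s}\subset\mathcal{H}_{rG}$ for $r\gg s$, Lemma \ref{lemma:anisotropic} makes $(P-\lambda)^{-1}$ a meromorphic family of inverses on $\mathcal{H}_{rG}$ near $\lambda_0$, so $P-\lambda$ is injective there off the (discrete) resonance set and $w(\lambda)-R(\lambda)u$ must vanish. The remaining ingredients --- $(P-\lambda_0)^{J(\lambda_0)}\Pi_{\lambda_0}=0$ from applying $P-\lambda$ to \eqref{expansion}, and the contour formula \eqref{eq:projector} --- are exactly those the paper uses, so the two proofs rest on the same technical foundation; yours is simply the more explicit ``write down the Laurent series of $R(\lambda)u$'' version, which is arguably cleaner and makes the bound $j_0\le J(\lambda_0)$ transparent as a statement about pole orders.
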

\begin{proof}
	Assume that $j_0 > J(\lambda_0)$ for the sake of contradiction. Since Sobolev spaces filter out $\mathcal{D}'(M; \E)$, there is an $s > 0$ such that $u \in H^{-s}(M; \E)$. Recalling the definition of the anisotropic space $\mathcal{H}_{rG}(M; \E)$ for $r > 0$ (see \eqref{eq:defanisotropicspace} below), we get
	\[\mathcal{D}'_{E_u^*}(M; \E) \cap H^{-r}(M; \E) \subset \mathcal{H}_{rG}(M; \E)\]
since $\mathcal{H}_{rG}$ is microlocally equivalent to $H^{-r}$ near $E_u^*$. Therefore $u \in \mathcal{H}_{rG}(M; \E)$ for $r > s$ and by Lemma \ref{lemma:anisotropic} below $(P - \lambda)^{-1}: \mathcal{H}_{rG}(M; \E) \to \mathcal{H}_{rG}(M; \E)$ is meromorphic near $\lambda_0$ for $r \gg s$.

Let us set $v := (P - \lambda_0)^{j_0 - 1}u$. Then $(P - \lambda)^{-1} v = (\lambda_0 - \lambda)^{-1} v$ and by applying \eqref{eq:projector} to $v$ we get $\Pi_{\lambda_0} v= v$. Note that \eqref{expansion} also implies $(P - \lambda_0)^{J(\lambda_0)} \Pi_{\lambda_0} t = 0$ for all $t \in \mathcal{H}_{rG}$. But all this implies
\begin{equation}\label{eq:commuteproj}
	(P - \lambda_0)^{j_0 - 1}u = \Pi _{\lambda_0}(P - \lambda_0)^{j_0 - 1} u = (P - \lambda_0)^{j_0 - 1} \Pi_{\lambda_0} u = 0.
\end{equation}
This contradicts the minimality of $j_0$ and proves the first claim.

For the second claim, take some $u \in \res^{(j_0)}_{P}(\lambda_0)$ and use induction on $j_0$. Note that the first two equalities of \eqref{eq:commuteproj} show $\Pi_{\lambda_0} u = u$ for $j_0 = 1$ and more generally that
\[(P - \lambda_0)^{j_0 - 1}(\Pi_{\lambda_0}u - u) = 0.\]
The fact that $\Pi_{\lambda_0}$ is a projector and the induction hypothesis show $\Pi_{\lambda_0}u = u$, proving the claim.

Lastly, if $u \in \ran \Pi_{\lambda_0}$ then $\Pi_{\lambda_0} u = u$ and so $(P - \lambda_0)^{J(\lambda_0)}u = 0$ by \eqref{expansion}, which together with the previous paragraph shows $\ker (P - \lambda_0)^{J(\lambda_0)} \cap \mathcal{D}'_{E_u^*}(M; \E) = \ran \Pi_{\lambda_0}$.
\end{proof}

\begin{rem}\rm  Generalized resonant spaces of forms (at zero) have a good cohomology theory, cf. \cite[Theorem 2.1]{DR17}. We emphasize that here we study resonant spaces at zero with $j=1$ in \eqref{eq:resj} and such that the elements are in the kernel of $\iota_{X}$, as well as conditions under which there are no Jordan blocks.
\label{rem:DR17}
\end{rem}



\subsection{Co-resonant states.}\label{subsec:rescores} Here we study the connection between the semisimplicity and a suitable pairing between resonant and co-resonant states. We start off with a lemma relating the adjoint of the spectral projector and the spectral projector of the adjoint.

\begin{lemma}\label{lemma:adjoints}
	Let $P$ be a first order differential operator acting on sections of $\E$ with principal symbol $-i \sigma(X)\times \id_{\E}$ and consider the adjoint operator $P^*$. 
	Denote the spectral projector of $P$ at $\lambda_0 \in \mathbb{C}$ by $\Pi_{\lambda_0}$ and of $P^*$ by $\Pi_{\lambda_0}'$. Also, denote the resolvent by $R_{P}(\lambda) = (P - \lambda)^{-1}$. Then\footnote{Here we interpret $-R_{-P^*}(-\overline{\lambda}): C^\infty(M; \E) \to \mathcal{D}'(M; \E)$ as the operator obtained by meromorphic continuation, but with respect to the flow generated by $-X$.} 
	\[R_{P}(\lambda)^* = -R_{-P^*}(-\overline{\lambda}) \quad \mathrm{and} \quad \Pi_{\lambda}^* = \Pi'_{-\overline{\lambda}}.\]
\end{lemma}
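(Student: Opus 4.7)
My plan is to first establish the resolvent identity $R_P(\lambda)^* = -R_{-P^*}(-\overline{\lambda})$ in the region where the integral formula \eqref{eq:laplace} is valid and then pass to the general identity by meromorphic continuation; the statement about spectral projectors will follow from the contour integral \eqref{eq:projector}.

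For the first step, fix $\lambda$ with $\im \lambda > C_0$ sufficiently large so that $R_P(\lambda)$ is bounded on $L^2(M;\E)$ and given by the Laplace transform \eqref{eq:laplace}. Since the propagator $e^{-itP}$ is a one-parameter group of operators on $L^2$ whose $L^2$-adjoint is $(e^{-itP})^* = e^{itP^*}$, taking the Hilbert space adjoint inside the integral yields
\[
R_P(\lambda)^* = -i\int_0^\infty e^{-i\overline{\lambda}t} e^{itP^*}\,dt = i\int_0^\infty e^{i(-\overline{\lambda})t} e^{-it(-P^*)}\,dt.
\]
Since $-P^*$ is a first-order operator with principal symbol $i\sigma(X) = -i\sigma(-X)$, i.e.\ it is adapted to the reversed (still Anosov) flow of $-X$, the right-hand side is precisely the integral representation of $R_{-P^*}(-\overline{\lambda})$ multiplied by $-1$, provided $\im(-\overline{\lambda}) = \im\lambda$ is large enough so that the Faure--Sj\"ostrand integral for $-P^*$ converges. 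This gives $R_P(\lambda)^* = -R_{-P^*}(-\overline{\lambda})$ as an equality of bounded operators on $L^2$.

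Next, both sides admit meromorphic continuations as operators $C^\infty(M;\E) \to \mathcal{D}'(M;\E)$: the left-hand side by the Faure--Sj\"ostrand theory applied to $P$, and the right-hand side by the same theory applied to $-P^*$ (the footnote reminds us that this continuation uses anisotropic spaces adapted to the reversed flow, i.e.\ with wavefront control in $E_s^*$). Because $\lambda \mapsto R_P(\lambda)^*$ and $\lambda \mapsto -R_{-P^*}(-\overline{\lambda})$ are meromorphic families on $\mathbb{C}$ coinciding on an open set, uniqueness of meromorphic continuation extends the identity to all $\lambda \in \mathbb{C}$ away from the poles.

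For the projectors, start from $\Pi_{\lambda_0} = \frac{1}{2\pi i}\oint_{\lambda_0}(\lambda - P)^{-1}\,d\lambda = -\frac{1}{2\pi i}\oint_{\lambda_0} R_P(\lambda)\,d\lambda$ with the contour a small counterclockwise loop. Taking adjoints commutes with the integral and conjugates the scalar factor and the differential, producing
\[
\Pi_{\lambda_0}^* = -\frac{1}{2\pi i}\oint_{\lambda_0} R_{-P^*}(-\overline{\lambda})\,\overline{d\lambda}.
\]
The change of variable $\mu = -\overline{\lambda}$ sends a counterclockwise loop around $\lambda_0$ to a clockwise loop around $-\overline{\lambda_0}$ and satisfies $\overline{d\lambda} = -d\mu$; the two sign reversals cancel, leaving $\Pi_{\lambda_0}^* = -\frac{1}{2\pi i}\oint_{-\overline{\lambda_0}} R_{-P^*}(\mu)\,d\mu = \Pi'_{-\overline{\lambda_0}}$.

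The main technical point to handle carefully is Step 2, the justification that taking adjoints and meromorphically continuing commute in the appropriate sense: one must make sense of $R_P(\lambda)^*$ as an operator $C^\infty \to \mathcal{D}'$ (via the natural pairing) and check, using the wavefront estimates on $R_P(\lambda)$ and $R_{-P^*}$ from \cite{FaSj11, DyZw16}, that both sides are meromorphic families in an identical functional-analytic framework. Once this is in place, the rest is bookkeeping with the Laplace and Cauchy integrals.
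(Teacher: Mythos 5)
Your proposal is correct and follows essentially the same route as the paper: establish the identity for $\im\lambda\gg 1$ from the Laplace-transform formula \eqref{eq:laplace} by taking $L^2$-adjoints, extend by meromorphic continuation (the paper phrases this through the pairing $\langle R_P(\lambda)u,v\rangle=\langle u,-R_{-P^*}(-\overline{\lambda})v\rangle$ for $u,v\in C^\infty$ and then for $u\in\mathcal{D}'_{E_u^*}$, $v\in\mathcal{D}'_{E_s^*}$), and deduce the projector identity from the contour integral \eqref{eq:projector}. Your sign bookkeeping in the change of variables $\mu=-\overline{\lambda}$ is consistent with the paper's computation.
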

\begin{proof}
	Firstly note that for $\im \lambda \gg 1$ and all $u, v \in L^2(M; \E)$, by \eqref{eq:laplace} we have the identity
	\begin{equation}\label{eq:adjoints'}
		\langle{R_P(\lambda)u, v}\rangle_{L^2} = \langle{u, -R_{-P^*}(-\overline{\lambda})v}\rangle_{L^2}.
	\end{equation}
	Then by analytic continuation we have the equality in \eqref{eq:adjoints'} for any $u, v \in C^\infty$ for all $\lambda \in \mathbb{C}$. Moreover, by continuity and the mapping properties of $R_P(\lambda): \mathcal{D}'_{E_u^*}(M; \E) \to \mathcal{D}'_{E_u^*}(M; \E)$ and $R_{-P^*}(-\overline{\lambda}): \mathcal{D}'_{E_s^*}(M; \E) \to \mathcal{D}'_{E_s^*}(M; \E)$ outside the poles, we have \eqref{eq:adjoints'} for all $u \in \mathcal{D}'_{E_u^*}$ and $v \in \mathcal{D}'_{E_s^*}$. This proves the first claim.
	
	Now let $u \in \mathcal{D}'_{E_u^*}(M; \E)$ and $v \in \mathcal{D}'_{E_s^*}(M; \E)$. We may write
	\[\langle{\Pi_{\lambda_0} u, v}\rangle = -\frac{1}{2\pi i} \oint_{\lambda_0} \langle{R_P(\lambda)u, v}\rangle d\lambda = \frac{1}{2\pi i} \oint_{\lambda_0} \langle{u, R_{-P^*}(-\overline{\lambda})v}\rangle d\lambda = \langle{u, \Pi_{-\overline{\lambda}_0}' v}\rangle.\]
	This proves $\Pi_{\lambda_0}^* = \Pi_{-\overline{\lambda}_0}'$.
\end{proof}

We proceed to define the \emph{co-resonant states}. Given an operator $P$ as in Lemma \ref{lemma:adjoints} and a resonance $\lambda_0 \in \mathbb{C}$, the space of co-resonant states at $\lambda_0$ is $\res_{-P^*}(-\overline{\lambda}_0) \subset \mathcal{D}'_{E_s^*}(M; \E)$. By the wavefront set conditions, notice that we may multiply resonances and co-resonances in the scalar case, or form inner products (see e.g. \cite[Proposition 7.6]{GrSj94}). We are now ready to re-interpret the semisimplicity in terms of the pairing
\begin{equation}\label{eq:pairingfull}
	\res_{P}(\lambda_0) \times \res_{-P^*}(-\overline{\lambda}_0) \to \mathbb{C}, \,\, (u, v) \mapsto \langle{u, v}\rangle_{L^2}.
\end{equation}
Observe that the pairing \eqref{eq:pairingfull} is non-degenerate: we have $\langle{u, v}\rangle = 0$ for all $v \in \res_{-P^*}(-\overline{\lambda}_0)$ if and only if $\langle{u, \Pi'_{-\overline{\lambda}_0}\varphi}\rangle = 0$ for all $\varphi \in C^\infty(M; \E)$. Then by Lemma \ref{lemma:adjoints} and since $\Pi_{\lambda_0} u = u$, this holds if and only if $u \equiv 0$; by an analogous argument for the other entry, we obtain the non-degeneracy. In particular, $m_P(\lambda_0) = m_{-P^*}(-\overline{\lambda}_0)$ and also $J(\lambda_0) = J'(-\overline{\lambda}_0)$. 
Here $J'(\mu)$ denotes the size of the largest Jordan block of $-P^*$ at $\mu$. 

\begin{lemma}\label{lemma:pairingsemisimple}
	Assume $P$ satisfies the assumptions of Lemma \ref{lemma:adjoints}. Then we have that the semisimplicity for $P$ at $\lambda_0$ holds if and only if the semisimplicity for $-P^*$ at $-\overline{\lambda}_0$ holds. Moreover, $P$ is semisimple at $\lambda_0$ if and only if the pairing
\begin{equation}\label{eq:pairingrescores}
	\res^{(1)}_{P}(\lambda_0) \times \res^{(1)}_{-P^*}(-\overline{\lambda}_0) \to \mathbb{C}, \,\, (u, v) \mapsto \langle{u, v}\rangle_{L^2}.
\end{equation}
is non-degenerate.
\end{lemma}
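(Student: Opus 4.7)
The plan is to reinterpret $1$-semisimplicity of $P$ at $\lambda_0$ as the statement $\res_P(\lambda_0) = \res^{(1)}_P(\lambda_0)$, i.e.\ $J(\lambda_0) = 1$ (absence of Jordan blocks at $\lambda_0$). By Lemma \ref{lemma:genres} and the Laurent expansion \eqref{expansion}, this is the natural interpretation of semisimplicity for the general operators $P$ considered in Lemma \ref{lemma:adjoints}, and it reduces to Definition \ref{definition:semisimple} in the case $P = -i\iota_X d_A$ acting on forms in $\ker \iota_X$.

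For part (a), I would apply Lemma \ref{lemma:adjoints} to obtain $\Pi_{\lambda_0}^* = \Pi'_{-\overline{\lambda}_0}$. Since $\Pi_{\lambda_0}$ commutes with $P$, the $L^2$-adjoint of $(P-\lambda_0)^k \Pi_{\lambda_0}$ is $(P^*-\overline{\lambda}_0)^k\Pi'_{-\overline{\lambda}_0}$. As taking adjoints preserves whether an operator vanishes, these two families of operators have the same first vanishing power, so $J(\lambda_0) = J'(-\overline{\lambda}_0)$. In particular $J(\lambda_0) = 1$ if and only if $J'(-\overline{\lambda}_0)=1$, which is (a).

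For the forward implication of (b), semisimplicity together with (a) yields $\res_P(\lambda_0) = \res^{(1)}_P(\lambda_0)$ and $\res_{-P^*}(-\overline{\lambda}_0) = \res^{(1)}_{-P^*}(-\overline{\lambda}_0)$, so the pairing \eqref{eq:pairingrescores} coincides with \eqref{eq:pairingfull}, which has already been shown to be non-degenerate. For the converse I argue by contrapositive: if $J(\lambda_0)\geq 2$, there exists a Jordan chain of length at least two, giving some $u \in \res^{(2)}_P(\lambda_0)\setminus \res^{(1)}_P(\lambda_0)$. Setting $w := (P-\lambda_0)u$, one has $0 \neq w \in \res^{(1)}_P(\lambda_0)$; yet for every $v \in \res^{(1)}_{-P^*}(-\overline{\lambda}_0)$ the identity $(P^*-\overline{\lambda}_0)v = 0$ forces
\[
\langle w, v\rangle_{L^2} = \langle (P-\lambda_0) u, v\rangle_{L^2} = \langle u, (P^*-\overline{\lambda}_0) v\rangle_{L^2} = 0,
\]
contradicting the assumed non-degeneracy of \eqref{eq:pairingrescores}.

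The main delicate point is the integration-by-parts identity in the last display, since $u$ and $v$ are only distributions. This is justified exactly as for \eqref{eq:pairingfull}: $u \in \mathcal{D}'_{E_u^*}$ and $v \in \mathcal{D}'_{E_s^*}$ with $E_u^* \cap E_s^* = \emptyset$ off the zero section, so the pairing is well defined via wavefront set considerations, and $P$ is a first-order local differential operator whose $L^2$-formal adjoint $P^*$ satisfies the standard duality relation, which extends from $C^\infty$ to the distributional setting via the mapping properties used in the proof of Lemma \ref{lemma:adjoints}.
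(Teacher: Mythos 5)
Your proof is correct and follows essentially the same route as the paper: you reinterpret semisimplicity as $J(\lambda_0)=1$ via Lemma~\ref{lemma:genres}, deduce $J(\lambda_0)=J'(-\overline{\lambda}_0)$ from the adjoint identity $\Pi_{\lambda_0}^*=\Pi'_{-\overline{\lambda}_0}$ of Lemma~\ref{lemma:adjoints}, and for the equivalence with non-degeneracy of \eqref{eq:pairingrescores} you use the same integration-by-parts computation and the non-degeneracy of the full pairing \eqref{eq:pairingfull}. The only cosmetic differences are that you run the ``non-degeneracy $\Rightarrow$ semisimplicity'' direction by contrapositive via an explicit length-two Jordan chain, whereas the paper argues directly, and for part (a) you phrase the adjoint argument operator-by-operator rather than passing through the non-degenerate pairing paragraph preceding the lemma.
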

\begin{proof}
	For the first claim, simply note that by the previous paragraph we have $J(\lambda_0) = J'(-\overline{\lambda}_0)$.
	
	For the second claim, assume first that the pairing \eqref{eq:pairingrescores} is non-degenerate. Assume we have $u, u' \in \mathcal{D}'_{E_u^*}(M; \E)$ with $(P - \lambda_0)u = u'$ with $u' \in \res_{P}^{(1)}(\lambda_0)$. We want to show $u' = 0$. We have, for any $v \in \res^{(1)}_{-P^*}(-\overline{\lambda}_0)$
	\[\langle{u', v}\rangle = \langle{(P - \lambda_0)u, v}\rangle = \langle{u, (P^* - \overline{\lambda}_0)v}\rangle = 0.\]
	Now non-degeneracy implies $u' = 0$.
	
	Assume next the semisimplicity holds for $P$ at $\lambda_0$ and let $u \in \res^{(1)}_{P}(\lambda_0)$ satisfy $\langle{u, v}\rangle = 0$ for all $v \in \res^{(1)}_{-P^*}(-\overline{\lambda}_0)$. Then we have, for all $\varphi \in C^\infty(M; \E)$
		\[\langle{u, \varphi}\rangle = \langle{\Pi_{\lambda_0}u, \varphi}\rangle = \langle{u,\Pi'_{-\overline{\lambda}_0}\varphi}\rangle = 0.\]
	Here we used Lemma \ref{lemma:adjoints} and the assumption. Thus $u \equiv 0$. The fact that $-P^*$ is semisimple at $-\overline{\lambda}_0$ and an analogous argument for the other entry proves the non-degeneracy and finishes the proof.
\end{proof}

\subsection{Further preparatory results}


We start by quoting an important technical result, see \cite[Lemma 2.3]{DyZw17}. 

\begin{lemma}\label{lemma2}
Suppose there exist a smooth volume form on $M$ and a smooth inner product on the fibers of $\mathcal{E}$ for which $P^* = P$ on $L^2(M; \mathcal{E})$. Suppose that $u \in \mathcal{D}'_{E_u^*}(M; \mathcal{E})$ satisfies\footnote{The inner product in this paper is complex conjugate in the second variable.}
\[Pu \in C^\infty(M; \mathcal{E}), \quad \im \langle{Pu, u}\rangle_{L^2} \geq 0.\]
Then $u \in C^\infty(M; \mathcal{E})$. In particular, the conclusion of the lemma holds for $u$ a resonant state with the eigenvalue $\lambda \in \mathbb{R}$ -- just swap $P$ with $P - \lambda$.
\end{lemma}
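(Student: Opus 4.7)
The plan is to show $u \in C^\infty$ by proving that $WF(u) = \emptyset$. Outside the characteristic set $\{\xi(X) = 0\}$ of $P$ (whose principal symbol is $\sigma(P)(x,\xi) = \xi(X(x))$), elliptic regularity together with $Pu \in C^\infty$ immediately gives smoothness. Since $WF(u) \subset E_u^*$ is already inside this characteristic set, nothing is gained there; but standard propagation of singularities along the Hamiltonian flow of $\sigma(P)$ --- which is the canonical lift of $\varphi_t$ to $T^*M$ --- propagates smoothness away from $E_u^*$. Thus the entire obstruction to $u \in C^\infty$ is localised at the radial source $E_u^*$ of this lifted flow, and the task reduces to establishing microlocal Sobolev regularity of $u$ of arbitrarily high order there.

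For this I would invoke the radial point estimate of Faure--Sj\"ostrand \cite{FaSj11} and Dyatlov--Zworski \cite{DyZw16} at $E_u^*$. In its standard form this estimate yields $u \in H^\infty$ microlocally near $E_u^*$ provided $Pu \in C^\infty$ and $u$ is known a priori to lie in $H^s$ microlocally near $E_u^*$ for some $s$ above a threshold $s_*$ determined by the subprincipal symbol of $P$ at $E_u^*$. The issue is that $u$ is only distributional, hence only known to lie in $H^{-N}$ for some large $N$, well below the threshold; the sign hypothesis will be used to cross it.

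That hypothesis enters through a positive commutator argument. Choose a self-adjoint pseudodifferential microlocaliser $A$ with wavefront set in a small conic neighbourhood of $E_u^*$. Using $P^* = P$, the key identity (to be made rigorous via a family of regularisers $R_\varepsilon \to \id$) reads
\[
2\,\im\langle Pu, A^2 u\rangle \;=\; \langle i[P, A^2]u, u\rangle.
\]
The principal symbol of $i[P, A^2]$ is $H_{\sigma(P)}|\sigma(A)|^2$, which has a definite sign at the radial source $E_u^*$. Writing $A^2 = \id + (A^2 - \id)$ on the left and applying a sharp G\aa rding inequality to the commutator, one obtains a microlocal coercive bound; combined with the hypothesis $\im\langle Pu, u\rangle \geq 0$ this supplies the a priori regularity needed to feed into the above-threshold radial estimate, which then upgrades $u$ to $H^\infty$ near $E_u^*$.

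The main obstacle is the rigorous implementation of the positive commutator argument at the distributional level: the pairings $\langle Pu, A^2 u\rangle$ and $\langle [P, A^2]u, u\rangle$ must be justified by regularising $u$ through a family of smoothing operators and passing to the limit, all while controlling uniformly in $\varepsilon$ the error terms generated by $[P, R_\varepsilon]$. Carrying this out carefully is the content of \cite[Lemma~2.3]{DyZw17}, whose argument we would follow verbatim.
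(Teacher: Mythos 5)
The paper does not prove this lemma: the statement is quoted directly from \cite[Lemma~2.3]{DyZw17}, and the reader is referred there for the proof. Your sketch reconstructs the broad strategy of that proof (reduction to microlocal regularity at the radial source $E_u^*$, a positive-commutator argument exploiting $P=P^*$ together with the sign of $\im\langle Pu,u\rangle$, and a regularisation to handle distributional $u$), and since you explicitly defer to \cite{DyZw17} for the rigorous implementation, this is consistent with the paper's treatment.

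Two small conceptual remarks on the sketch itself. First, $H_{\sigma(P)}|\sigma(A)|^2$ has no definite sign near $E_u^*$ if $A$ is simply a $0$-homogeneous microlocaliser equal to $1$ near $E_u^*$; the sign only appears once $A$ carries an escape-function weight of the form $\langle\xi\rangle^{s}$, and whether that sign is favourable depends on $s$ relative to the threshold. Second, the hypothesis $\im\langle Pu,u\rangle\geq 0$ does not manufacture above-threshold a priori regularity that one then feeds into a separate high-regularity radial estimate; rather, it is used \emph{inside} the positive-commutator/regularisation argument to let the estimate close when the regularisation parameter is removed starting only from below-threshold regularity. These are precisely the technical points \cite[Lemma~2.3]{DyZw17} handles, so your deferral there is appropriate.
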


We also need a simple regularity result analogous to \cite[Lemma 2.1]{DyZw17}. We give it here for completeness

\begin{lemma}\label{lemma:regularity}
	Assume $d_{A}$ is flat and let $\Gamma \subset T^*M \setminus 0$ be a closed conic set. Assume that $u \in \mathcal{D}'_{\Gamma}(M; \Omega^k \otimes \mathcal{E})$ and $d_{A} u \in C^\infty(M; \Omega^{k+1} \otimes \mathcal{E})$.
	Then there exists $v \in C^\infty(M; \Omega^k \otimes \mathcal{E})$ and $w \in \mathcal{D}'_{\Gamma}(M; \Omega^{k-1} \otimes \mathcal{E})$ such that $u = v +d_{A} w$.
\end{lemma}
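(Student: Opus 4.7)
The plan is to establish a twisted Hodge decomposition and apply it to $u$. Equip $M$ with a Riemannian metric and $\mathcal{E}$ with a Hermitian inner product for which $A$ is unitary; this yields a well-defined formal adjoint $d_A^*$ on $\Omega^\bullet \otimes \mathcal E$. Form the twisted Hodge Laplacian $\Delta_A := d_A d_A^* + d_A^* d_A$ on $\Omega^k \otimes \mathcal E$. This is an elliptic, non-negative, self-adjoint operator on $L^2(M; \Omega^k \otimes \mathcal{E})$, whose kernel $\mathcal H^k$ consists of smooth harmonic forms and is finite-dimensional. Standard elliptic theory then provides a pseudodifferential Green's operator $G$ of order $-2$, together with the smoothing orthogonal projector $H$ onto $\mathcal H^k$, satisfying
\begin{equation*}
\id = H + \Delta_A G = H + d_A d_A^* G + d_A^* d_A G.
\end{equation*}

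First, I would apply this identity to $u$ and set
\begin{equation*}
w := d_A^* G u, \qquad v := H u + d_A^* d_A G u,
\end{equation*}
so that $u = v + d_A w$ by construction. To check the required regularity of $v$, note that $H u$ lies in the finite-dimensional space $\mathcal H^k \subset C^\infty$ since $H$ is smoothing (and $H u$ makes sense as the pairing of the distribution $u$ with a basis of smooth harmonic forms). For the second piece of $v$, I would invoke the key commutation $d_A G = G d_A$, which follows from $d_A \Delta_A = \Delta_A d_A$ (an immediate consequence of the flatness $d_A^2 = 0$) together with the uniqueness of the Green's operator on the $L^2$-orthogonal complement of harmonic forms. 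Combined with the assumption $d_A u \in C^\infty$, this yields
\begin{equation*}
d_A^* d_A G u = d_A^* G\, d_A u \in C^\infty,
\end{equation*}
since $d_A^* G$ is a pseudodifferential operator of order $-1$, hence preserves $C^\infty$. Thus $v \in C^\infty(M; \Omega^k \otimes \mathcal E)$.

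Finally, for the wavefront set of $w$, I would use the fact quoted in the preliminaries that pseudodifferential operators do not increase wavefront sets: since $d_A^* G$ is pseudodifferential, $WF(w) = WF(d_A^* G u) \subset WF(u) \subset \Gamma$, so $w \in \mathcal{D}'_\Gamma(M; \Omega^{k-1} \otimes \mathcal E)$ as required.

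The only mildly delicate point is justifying $d_A G = G d_A$ at the level of distributions. The cleanest approach is to observe that on smooth sections $d_A$ and $G$ commute (by the $L^2$-spectral calculation above), and then extend to $\mathcal D'$ by a density/duality argument using $(d_A)^* = d_A^*$ together with the fact that $G$ is formally self-adjoint and maps $\mathcal D'$ to $\mathcal D'$ continuously by elliptic regularity. Once this is secured, everything else is a short verification.
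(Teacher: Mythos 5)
Your proof is essentially the paper's: the paper defers to \cite[Lemma 2.1]{DyZw17} and says to replace $d$ by $d_A$ and $\delta$ by $d_A^*$, which is precisely the twisted Hodge decomposition you spell out, with the same commutation $G d_A = d_A G$ (from $\Delta_A d_A = d_A \Delta_A$, which uses only $d_A^2 = 0$) and the same wavefront-set bound via pseudodifferential operators. One small remark: the assumption that $\mathcal E$ can be given a Hermitian metric making $A$ unitary is unnecessary (and in general not always achievable); any smooth Hermitian metric yields a formal adjoint $d_A^*$ and an elliptic, self-adjoint $\Delta_A$, and the commutation above needs only flatness, so the argument covers the lemma's hypotheses as stated.
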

\begin{proof}
	The proof follows formally by replacing $d$ with $d_{A}$ and $\delta$ with $d_{A}^*$ in the proof \cite[Lemma 2.1]{DyZw17}.
\end{proof}



\subsection{Cohomology in a flat bundle.}
Given a manifold $M$ of dimension $n$ and a Hermitian vector bundle $\mathcal{E}$ with a flat connection $A$, we may consider the complex given by
\begin{equation}\label{eq:chain}
    \xymatrix{
        0 \ar[r]^-{d_{A}} & C^\infty(M; \mathcal{E}) \ar[r]^-{d_{A}} & C^\infty(M; \Omega^1 \otimes \mathcal{E}) \ar[r]^-{d_{A}} & \cdots \ar[r]^-{d_{A}} & C^\infty(M; \Omega^n \otimes \mathcal{E}) \ar[r]^-{d_{A}} & 0.
        }
\end{equation}
Here we extend, as usual, the action of $d_{A}$ to vector valued differential forms by asking that the Leibnitz rule holds. The homology of this complex will be denoted by $H^k_A(M; \mathcal{E})$ for $k = 0, \dotso, n$. 

Consider now $\Sigma$ an oriented Riemannian surface and let $\mathcal E$ be a Hermitian vector bundle over $\Sigma$ equipped with a unitary, flat
connection $A$. We can pull-back the bundle $\mathcal E$ to the unit sphere bundle $\pi:S\Sigma\to\Sigma$ to obtain $\pi^*\mathcal E$, equipped with a unitary, flat connection $\pi^*A$. 




\begin{lemma}\label{lemma:H^1}
		Assume $\Sigma$ has genus $g \neq 1$. 
		Then the following map is an isomorphism:
		\begin{equation}
				\pi^*: H^1_A(\Sigma; \mathcal{E}) \to H^1_{\pi^*A}(S\Sigma; \pi^*\mathcal{E}).
		\end{equation}
\end{lemma}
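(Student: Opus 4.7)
The plan is to exploit the circle bundle structure of $\pi: S\Sigma \to \Sigma$, whose Euler number equals $\chi(\Sigma) = 2-2g \neq 0$ under the genus hypothesis. Since the connection $\pi^*A$ is pulled back from the base, it is invariant under the vertical $S^1$-action, and this should give a twisted Gysin long exact sequence
\[
\cdots \to H^{k-2}_A(\Sigma; \mathcal{E}) \xrightarrow{\cup e} H^k_A(\Sigma; \mathcal{E}) \xrightarrow{\pi^*} H^k_{\pi^*A}(S\Sigma; \pi^*\mathcal{E}) \xrightarrow{\pi_*} H^{k-1}_A(\Sigma; \mathcal{E}) \xrightarrow{\cup e} H^{k+1}_A(\Sigma; \mathcal{E}) \to \cdots
\]
with $e \in H^2(\Sigma;\mathbb{R})$ the Euler class of $S\Sigma \to \Sigma$ and $\pi_*$ integration over the fibre. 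Taking $k=1$, the segment
\[
0 = H^{-1}_A(\Sigma;\mathcal{E}) \to H^1_A(\Sigma;\mathcal{E}) \xrightarrow{\pi^*} H^1_{\pi^*A}(S\Sigma; \pi^*\mathcal{E}) \xrightarrow{\pi_*} H^0_A(\Sigma;\mathcal{E}) \xrightarrow{\cup e} H^2_A(\Sigma;\mathcal{E})
\]
is exact, which immediately gives injectivity of $\pi^*$, and reduces bijectivity to showing that the map $\cup e: H^0_A(\Sigma;\mathcal{E}) \to H^2_A(\Sigma;\mathcal{E})$ is injective.

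For the latter, a class in $H^0_A(\Sigma;\mathcal{E})$ is represented by a $d_A$-parallel section $s$; since $A$ is unitary, $|s|$ is constant on $\Sigma$. If $e\cdot s = d_A\gamma$ for some $\gamma \in \Omega^1(\Sigma;\mathcal{E})$, then using compatibility of $A$ with the Hermitian metric together with $d_A s = 0$,
\[
\chi(\Sigma)\,|s|^2 \;=\; \int_\Sigma e\,\langle s,s\rangle \;=\; \int_\Sigma \langle d_A\gamma, s\rangle \;=\; \int_\Sigma d\langle\gamma, s\rangle \;=\; 0
\]
by Stokes' theorem, forcing $s \equiv 0$. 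Hence $\pi^*$ is an isomorphism.

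The main obstacle is establishing the twisted Gysin sequence itself, since it is the only input that is not elementary. Rather than invoking Leray--Serre with local coefficients, I would build it by hand in three steps. First, show that every $d_{\pi^*A}$-closed form on $S\Sigma$ is cohomologous to a $V$-invariant one, by averaging along the flow of the vertical field $V$ (the relevant Cartan-type homotopy $(\varphi_s^V)^*\omega - \omega = d_{\pi^*A}\int_0^s(\varphi_t^V)^*\iota_V\omega\,dt$ is available because $\pi^*A$ is $V$-invariant). Second, fix a connection $1$-form $\alpha$ on the circle bundle with $\alpha(V) = 1$ and $d\alpha = -\pi^*e$, and uniquely decompose a $V$-invariant form as $\omega = \pi^*\beta + \alpha\wedge \pi^*\gamma$. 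Third, expand $d_{\pi^*A}\omega = \pi^*(d_A\beta - e\wedge\gamma) - \alpha\wedge\pi^*d_A\gamma$ to read off the maps $\pi^*$ (forms with $\gamma = 0$) and $\pi_*$ (the map $\omega \mapsto [\gamma]$), and verify exactness at each spot by exploiting uniqueness of the decomposition. The exclusion $g\neq 1$ is genuine: for $g=1$ the Euler class vanishes and $\pi^*$ fails to hit classes of the form $[\alpha\cdot s]$ with $s$ a nonzero flat section.
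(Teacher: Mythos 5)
Your proposal is correct, and it takes precisely the route that the paper relegates to a remark after its proof: a twisted Gysin sequence for the circle bundle $\pi: S\Sigma \to \Sigma$ (the paper cites \cite[p.~177]{BT} for this). The paper's own argument is more hands-on: injectivity is obtained directly by showing $\iota_V d_{\pi^* A}F = 0$ forces $F = \pi^* f$, and surjectivity by reducing to the solvability of $\iota_V u = \iota_V d_{\pi^*A}F$, which holds iff the fibre integral $s = \pi_* u$ vanishes; one then shows $s$ is parallel and pairs $u$ against $\pi^*(sK\,d\vol_\Sigma) = -d\psi \cdot \pi^*s$ to get $\lVert s\rVert^2\, 2\pi\chi(\Sigma) = 0$ via Gauss--Bonnet and Stokes. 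Your argument packages the same two facts as exactness of the Gysin segment at $H^1_{\pi^*A}(S\Sigma;\pi^*\mathcal{E})$ together with injectivity of $\cup\, e$ on $H^0_A(\Sigma;\mathcal{E})$, and your computation $\chi(\Sigma)\lvert s\rvert^2 = \int_\Sigma\langle d_A\gamma, s\rangle = 0$ is the base-level shadow of the paper's integral identity; in both cases the essential inputs are that $\pi^* e$ is exact on $S\Sigma$ (via the connection form) and that $\chi(\Sigma)\neq 0$. What your approach buys is a cleaner conceptual organization and, via the explicit three-step construction of the sequence (averaging by the vertical flow, which is legitimate here since $\iota_V\pi^*A = 0$; decomposing $V$-invariant forms as $\pi^*\beta + \alpha\wedge\pi^*\gamma$; expanding $d_{\pi^*A}$), it also computes the other cohomology groups of $S\Sigma$ for free. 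What the paper's direct argument buys is brevity and the avoidance of setting up the twisted Gysin machinery, which you correctly identify as the only non-elementary ingredient and for which your sketch is sound. One marginal caveat: your closing claim that the exclusion $g\neq 1$ is "genuine" only exhibits a failure of surjectivity when a nonzero flat section exists, i.e.\ when $b_0(\Sigma;\mathcal{E})\neq 0$; for $g=1$ and an acyclic $A$ the map could still be an isomorphism. This does not affect the proof of the stated lemma.
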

\begin{proof}
        There is a vertical vector field $V$ that generates the rotation in the fibres of $S\Sigma$. We first check $\pi^*$ is injective, so assume $\pi^* \theta = d_{\pi^*A} F$, where $\theta \in C^\infty(\Sigma; \Omega^1 \otimes \mathcal{E})$ is $d_{A}$-closed and $F \in C^\infty(S\Sigma; \mathcal{E})$. This implies $\iota_{V}d_{\pi^*A} F = 0$. Note that if $x \in \Sigma$, there is a small ball $B$ with $x \in B$, over which $\mathcal{E}$ is trivial. Thus $\iota_{V}d_{\pi^*A}F = 0$ implies $VF = 0$ (since $\iota_{V}\pi^*A=0$) and so $F = \pi^* f$ locally; this is easily seen to extend to $F = \pi^*f$ globally for some $f \in C^\infty(\Sigma; \mathcal{E})$. This implies $\pi^*(d_{A} f - \theta) = 0$ and so $d_{A} f = \theta$.
        
        For surjectivity, take $u \in C^\infty(S\Sigma; \Omega^1 \otimes \pi^*\mathcal{E})$ with $d_{\pi^*A} u = 0$. We want to prove there are $v$ and $F$ such that $u = \pi^* v + d_{\pi^*A}F$, where $v$ is $d_{A}$-closed. This implies 
        \begin{equation}\label{eq:wanttosolve}
            \iota_V u =\iota_{V}d_{\pi^*A}F.
        \end{equation}
        If we solve equation \eqref{eq:wanttosolve}, then $w = u - d_{\pi^*A}F$ satisfies: $d_{\pi^*A} w = 0$ and $\iota_V w = 0$. By going to local trivialisations where $A = 0$, a computation implies $w = \pi^*v$  for some $1$-form $v$ locally. Again, by uniqueness this may be easily extended to some global $v \in C^\infty(\Sigma; \E)$ with $d_{A} v = 0$. We now focus on \eqref{eq:wanttosolve} and finding such $F$.
        
        To this end, we introduce the pushforward map $\pi_*: C^\infty(S\Sigma; \Omega^1 \otimes \pi^*\mathcal{E}) \to C^\infty(\Sigma; \mathcal{E})$, by integrating along the fibers
        \begin{equation}
            \pi_*: \alpha(x, v) \mapsto \beta(x) = \int_{S_x \Sigma} \alpha.
        \end{equation}
        One can show that the pushforward is well-defined and that it intertwines $d_{A}$ and $d_{\pi^*A}$; after going to a trivialisation where $A = 0$, this reduces to showing commutation with $d$, which follows from \cite[Proposition 6.14.1]{BT}. Thus $\pi_*$ descends to cohomology, i.e. we have $\pi_*: H_{\pi^*A}^1(S\Sigma; \E) \to H_A^0(\Sigma; \E)$.
        
        Now observe that the equation \eqref{eq:wanttosolve} can be solved if and only if $\pi_* u = 0$. We introduce the section $s \in C^\infty(\Sigma; \mathcal{E})$ with $s(x) = \pi_* u$. Note that $d_{A}s=0$. Moreover, we have for $K$ the Gaussian curvature of $\Sigma$:
        \begin{align}\label{eq:identityparallel}
            \int_{S\Sigma} \big\langle{u, \pi^*(sKd\vol_\Sigma)}\big\rangle = \int_\Sigma \big\langle{\pi_* u, sKd\vol_\Sigma}\big\rangle = \int_\Sigma \lVert{s}\rVert^2 K d\vol_\Sigma = \lVert{s}\rVert^2 2\pi \chi(\Sigma).
        \end{align}
        Here we used that $\lVert{s}\rVert^2$ is constant, since $s$ is parallel and $A$ is unitary, and we applied Gauss-Bonnet theorem. In the first equality we use a generalisation of \cite[Proposition 6.15.]{BT}. We use the convention that $\langle{s\alpha, s'\beta}\rangle = \langle{s, s'}\rangle_{\E} \alpha \wedge \overline{\beta}$, where $\alpha$ and $\beta$ are forms of complementary degree and $s, s'$ are sections.
        
        On the other hand, we have $\pi^*(Kd\vol_\Sigma) = - d\psi$, where $\psi$ is the connection one form on $S\Sigma$. Therefore we have the pointwise identity, as $d_{\pi^*A}u = 0$ and $d_A s = 0$
        \[\big\langle{u, \pi^*(sKd\vol_\Sigma)}\big\rangle = d\big\langle{u, (\pi^*s) \psi}\big\rangle.\]
        So by Stokes' theorem we obtain that the first integral in \eqref{eq:identityparallel} is zero. Since $g \neq 1$, we have $\chi(\Sigma) \neq 0$ and so $s = 0$. Therefore $\pi_*u = 0$, which concludes the proof.
\end{proof}

\begin{rem}\rm
    Alternatively, we could have proved Lemma \ref{lemma:H^1} more abstractly using a version of the Gysin sequence for twisted de Rham complexes, see \cite[p. 177]{BT} for more details.
\end{rem}

We now compute the Euler characteristic of the twisted de Rham complex. This shows that, although the twisted Betti numbers, i.e. dimensions of $H_A^k(M; \mathcal{E})$ can jump by changing $A$, the Euler characteristic is independent of the choice of flat connection. We could not find an appropriate reference for this result.

\begin{lemma}\label{lemma:twistedeuler}
		The Euler characteristic of the chain complex \eqref{eq:chain}, denoted by $\chi_{A}(M; \mathcal{E})$ is equal to:
		\[\chi_{A}(M; \mathcal{E}) = \rk(\mathcal{E}) \chi(M).\]
\end{lemma}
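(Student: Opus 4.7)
\smallskip
\noindent\textbf{Proof proposal.} My plan is to express $\chi_A(M;\mathcal{E})$ as the Euler characteristic of a finite-dimensional cochain complex of topological origin, and then read off the answer combinatorially.

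First, since $A$ is flat, the sheaf $\mathbb{V}$ of $d_A$-parallel local sections of $\mathcal{E}$ is a local system of complex vector spaces of rank $r := \rk(\mathcal{E})$, corresponding to the monodromy representation $\pi_1(M) \to U(r)$. On any contractible open set $U \subset M$, gauge-equivalence of $d_A$ to the trivial connection and the ordinary Poincar\'{e} lemma show that the twisted de Rham complex $(C^\infty(U; \Omega^\bullet \otimes \mathcal{E}), d_A)$ is a fine resolution of $\mathbb{V}$. Consequently the twisted de Rham theorem provides a canonical isomorphism
\[ H^k_A(M;\mathcal{E}) \;\cong\; H^k(M;\mathbb{V}) \]
with singular cohomology with local coefficients in $\mathbb{V}$.

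Next, pick a finite CW-decomposition (or a smooth triangulation) of $M$, with $n_k$ cells in dimension $k$. Each cell is contractible, so the restriction of $\mathbb{V}$ to any cell is trivialisable of rank $r$, and the cellular cochain complex computing $H^*(M;\mathbb{V})$ has $\dim C^k_{\text{cell}}(M;\mathbb{V}) = r \cdot n_k$. Since the Euler characteristic of any finite cochain complex of finite-dimensional vector spaces equals the alternating sum of ranks of the cochain groups (as well as the alternating sum of ranks of the cohomology groups), we conclude
\[ \chi_A(M;\mathcal{E}) = \sum_{k=0}^n (-1)^k \dim H^k_A(M;\mathcal{E}) = r \sum_{k=0}^n (-1)^k n_k = \rk(\mathcal{E}) \cdot \chi(M). \]

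The only non-trivial input is the twisted de Rham theorem identifying $H^*_A$ with local-coefficient cohomology; this is classical and the main (very minor) obstacle. A purely analytic alternative uses Hodge theory for the unitary flat connection together with the McKean--Singer identity $\chi_A(M;\mathcal{E}) = \mathrm{Str}(e^{-t\Delta_A})$: letting $t \to 0^+$, flatness of $A$ makes the fibrewise trace reduce to $\rk(\mathcal{E})$, and the short-time heat-kernel expansion collapses to $\rk(\mathcal{E})$ times the Chern--Gauss--Bonnet integrand, yielding the same answer. I expect the combinatorial route to be the cleaner one to record here.
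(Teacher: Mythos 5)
Your argument is correct, but it takes a genuinely different route from the paper. The paper proves the lemma analytically: it identifies $\chi_A(M;\mathcal{E})$ with the index of $d_A+d_A^*:\Omega^{odd}\otimes\mathcal{E}\to\Omega^{even}\otimes\mathcal{E}$ via Hodge theory for the twisted Laplacian $\Delta_A$, then invokes the Atiyah--Singer index theorem together with the multiplicativity of the Chern character over $K$-theory and the fact that $\ch(\mathcal{E})=\rk(\mathcal{E})$ for a flat bundle, reducing to the untwisted Gauss--Bonnet case. You instead go through the twisted de Rham theorem, identifying $H^k_A(M;\mathcal{E})$ with cellular cohomology with coefficients in the rank-$r$ local system of parallel sections, and then simply count: each $k$-cell contributes an $r$-dimensional cochain group, and the Euler characteristic of a finite complex equals the alternating sum of the cochain dimensions. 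This is more elementary (no index theorem, no heat kernel), and it also makes transparent that unitarity of $A$ plays no role --- only flatness is needed to form the local system. What the paper's approach buys is that it stays entirely within the analytic framework already set up (Hodge theory for $\Delta_A$), at the cost of invoking heavier machinery; your combinatorial route is shorter and self-contained modulo the classical twisted de Rham theorem. Your parenthetical McKean--Singer alternative is essentially the paper's argument in local-index form, so the genuinely new content is the cellular count.
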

\begin{proof}
        A way to prove this is given by an application of the Atiyah-Singer index theorem; we sketch the proof here. It starts by noting that, as with the usual non-twisted forms we have
        \begin{equation}
            d_A + d_A^*: C^{\infty}(M; \Omega^{odd} \otimes \mathcal{E}) \to C^{\infty}(M; \Omega^{even} \otimes \mathcal{E}).
        \end{equation}
        Here $\Omega^{even} = \oplus_i \Omega^{2i}$ and $\Omega^{odd} = \oplus_i \Omega^{2i + 1}$ are the bundles of even and odd differential forms, respectively. Let us introduce the twisted Hodge laplacian, $\Delta_A = d_A^*d_A + d_A d_A^*$. By Hodge theory, we have $H_A^k(M; \mathcal{E}) \cong \ker \Delta_A|_{\Omega^k \otimes \mathcal{E}}$. Therefore, we also have $\ind(d_A + d_A^*) = \chi_A(M; \mathcal{E})$, where by $\ind$ we denote the index of an operator.
        
        By Atiyah-Singer index theorem,
        \begin{align}
            \ind(d_A + d_A^*) &= \int_{T^*M} \ch\big(\dd(d_A + d_A^*)\big) \mathcal{T}(TM)\\
            &= \int_{T^*M} \ch(\mathcal{E}) \ch\big(\dd(d + d^*)\big) \mathcal{T}(TM)\\
            &= \rk(\mathcal{E}) \int_{T^*M} \ch\big(\dd(d + d^*)\big) \mathcal{T}(TM) = \rk(\mathcal{E}) \chi(M).
        \end{align}
        Here, $\mathcal{T}$ denotes the Todd class and $\ch$ denotes the Chern character\footnote{More explicitly, these are given for a vector bundle $V$ over $M$ with curvature two-form $\Omega$ and $w = -\frac{\Omega}{2\pi i}$, by: $\ch(V) = \Tr \exp w$ and $\mathcal{T}(V) = \det \Big(\frac{w}{1 - \exp(-w)}\Big)$. Here we apply the Taylor series at zero to forms.}. The letter $\dd$ denotes the \emph{difference bundle}. Since $(\mathcal{E}, A)$ is flat by assumption, we have $\ch(\mathcal{E}) = \rk(\mathcal{E})$. The transition to the second line is justified since the principal symbol of $d_A + d_A^*$ is equal to $\sigma(d + d^*) \otimes Id_{\mathcal{E}}$, so that
        \[\dd(d_A + d_A^*) =  \dd\big(\sigma(d + d^*) \otimes Id\big) = [G_1 \otimes \mathcal{E}] - [G_2 \otimes \mathcal{E}] = ([G_1] - [G_2]) \cdot [\mathcal{E}] \in K^{comp}(T^*M).\]
        Here $G_1$ and $G_2$ are certain vector bundles over a one point compactification of $T^*M$ and $K^{comp}$ denotes the suitable $K$-theory. Since $\ch$ is multiplicative over the $K$-theory, we get the product of characters. The last equality follows from the Atiyah-Singer index theorem for the operator $d + d^*: \Omega^{odd} \to \Omega^{even}$ and the non-twisted Hodge theory.
\end{proof}

\section{Meromorphic continuation of $\zeta_{A}(s)$}\label{section:zeta}


We devote this section to showing meromorphic continuation of $\zeta_A(s)$ given by \eqref{DefRuelleZeta} for an arbitrary (possibly non-flat, non-unitary) $A$. We note that the meromorphic continuation of the Ruelle zeta function was first established in \cite{GiLiPo} and later by \cite{DyZw16}, and that here we follow the latter microlocal approach. 

Let $(M, g)$ be a compact Riemannian manifold and $\mathcal{E}$ a Hermitian vector bundle over $M$ equipped with a connection $A$ and an endomorphism valued function $\Phi$. Also assume $M$ admits an Anosov flow $\varphi_t$ with generator $X$. We consider the first order operator $P = -i\iota_X d_A + \Phi$.

Let us denote by $\alpha_{x, t}$ the parallel transport (with respect to $P$) in the fibers of $\mathcal{E}$ along integral curves of $\varphi_t$
\begin{align}\label{eq:paralleltransport}
\alpha_{x, t}: \mathcal{E}(x) \to \mathcal{E}(\varphi_t(x)).
\end{align}
Recall now that the propagator $e^{-itP}$ is the one parameter family of operators, defined by solving the first order PDE in $(t, x)$ for $u \in C^\infty(M; \E)$
\begin{equation}\label{eq:propagator}
\Big(\frac{\partial}{\partial t} + iP\Big)(e^{-itP} u) = 0. 
\end{equation}
Then the solution $u(t, x) = \big(e^{-itP}u\big)(t, x) \in C^\infty(\mathbb{R} \times M; \E)$ (we pull back $\E$ to $\mathbb{R} \times M$) and we have
\begin{equation}\label{eq:propagator'}
	\big(e^{-itP}u\big) (t, x) = u(t, x) = \alpha_{\varphi_{-t}x, t} u(\varphi_{-t}x).
\end{equation}
This follows by a computation in local coordinates. In fact, in a local coordinate system $U \ni x$ over which $\E|_U \cong U \times \mathbb{C}^m$ is trivial and for small $t$, we have
\begin{equation}\label{eq:parallel}
\big(\partial_t + A(\partial_t) + i \Phi(\varphi_t x) \big) \alpha_{x, t} = 0.
\end{equation}
We write $A$ for the matrix of one forms associated to $d_A = d + A$ and identify $\alpha_{x, t}$ with a matrix. Then we may compute, using chain rule
\begin{align*}
	\partial_tu(t, x) &= -\big(A(X(x)) + i\Phi(x)\big) \alpha_{\varphi_{-t}x, t}u(\varphi_{-t} x) - (X\alpha)_{\varphi_{-t}x, t} u(\varphi_{-t}x) - \alpha_{\varphi_{-t}x, t} Xu(\varphi_{-t}x)\\
	&= -iP(\alpha_{\varphi_{-t}x, t}u)(t, x) + X(\alpha_{\varphi_{-t}x, t} u)(t, x)  - (X\alpha_{\varphi_{-t}x, t}) u(\varphi_{-t}x) - \alpha_{\varphi_{-t}x, t} Xu(\varphi_{-t}x)\\
	&= -iP u(t, x).
\end{align*}
Here we used \eqref{eq:parallel} in the first equality, the definition of $P$ in the second and the chain rule in the last one. We thus obtain \eqref{eq:propagator'} for small $t$ and by iteration we obtain it for all $t$. As a consequence, we obtain for any $f \in C^\infty(M)$ and $u \in C^\infty(M; \mathcal{E})$
\begin{equation}\label{eq:chainrule}
    e^{-itP}(f u) = f \circ \varphi_{-t} \cdot e^{-itP}u.
\end{equation}




Denote by $\mathcal{P}_{x, t}$ the linearised Poincar\'e map for any time $t$ and point $x \in M$
\begin{align*}
\mathcal{P}_{x, t} = (d\varphi_t(x))^{-T}: \Omega^1_0(x) \to \Omega^1_0(\varphi_t x),
\end{align*}
where for $x \in M$ and $k \in \mathbb{N}$, we define the subbundle of differential forms in the kernel of $\iota_X$
\[\Omega^k_0 = \Omega^k \cap \ker \iota_X.\]
We write $-T$ for the inverse transpose. Note $\Lapl_X$ acts on sections of $\Omega^k_0$ for any $k$. Also, we have that $\varphi_t^*$ is a one parameter family of maps acting on $\Omega_0^k$ for any $k$, such that we may write $(\varphi_t)^* = e^{t \Lapl_X}$. So we obtain that, by definition of $\varphi_{-t}^*$ for any $\eta$ a smooth $k$-form (cf. \eqref{eq:propagator'})
\begin{align}\label{eq:propagator''}
\wedge^k \mathcal{P}_{x, t}(\eta(x)) &= e^{-t\Lapl_X} \eta(\varphi_tx).
\end{align}
Here $\wedge^k \mathcal{P}_{x, t}$ is the exterior product of maps acting on $\Omega_0^k$. 
Given a closed orbit $\gamma$ with period $T$, we consider a point $x_0 \in \gamma$ and define 
\[\Tr \alpha_{\gamma} := \Tr \alpha_{x_0, T}.\]
Since the maps $\alpha_{\varphi_t x_0, T}$ are conjugate for varying $t$, the trace is independent of $\gamma$. Similarly, we define
\[\det(\id - \mathcal{P}_\gamma) := \det(\id - \mathcal{P}_{x_0, T}).\]

In what follows, for technical purposes we assume that we have a constant $\beta \in \mathbb{N}$ such that
\begin{align}\label{eq:detsign}
|\det(\id - \mathcal{P}_\gamma)| = (-1)^\beta \det(\id - \mathcal{P}_\gamma).
\end{align}
This happens in particular if $E_s$ and $E_u$ are orientable, where $\beta = \dim E_s$. This assumption may be removed by using a suitable twist with an orientation bundle (see \cite{DyG16, DyZw16, GiLiPo} for details).

We will denote by $\gamma^\#$ a general primitive periodic orbit and if $\gamma$ is an arbitrary periodic orbit, then $l_\gamma^\#$ will denote the period of the primitive periodic orbit corresponding to $\gamma$.


\begin{theorem}\label{merotrace}
Define for $\re s \gg 1$
\begin{align}\label{eqntrace}
F_P(s) := \sum_{\gamma \in \mathcal{G}}\frac{e^{-s l_\gamma}l_{\gamma}^\# \Tr \alpha_\gamma}{|\det(\id - \mathcal{P}_\gamma)|} 
\end{align}
where the sum is over all periodic trajectories. Then $F_P(s)$ extends meromorphically to all $s \in \mathbb{C}$. The poles of $F_P(s)$ are precisely $s \in \mathbb{C}$, where $is$ a Pollicott-Ruelle resonance of $P$. Moreover, the poles are simple with residues equal to the Pollicott-Ruelle multiplicity $m_P(is)$. 
\end{theorem}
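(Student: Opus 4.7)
The plan is to follow the microlocal trace formula strategy of Dyatlov--Zworski, adapted to the twisted setting. The idea is to identify $F_P(s)$ (up to a factor of $i$) with the flat trace of the meromorphically continued resolvent $R(\lambda)$ evaluated at $\lambda=is$, then read off the poles and residues from the Laurent expansion \eqref{expansion}.

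First I would establish a Guillemin--Atiyah--Bott trace formula for the twisted propagator. Using \eqref{eq:propagator'}, the Schwartz kernel of $e^{-itP}$ is supported on $\{(x,\varphi_{-t}x)\}$ composed with the parallel transport $\alpha_{\varphi_{-t}x,t}$. For $t$ away from $0$ the wavefront set of this kernel is disjoint from the conormal of the diagonal, so the flat trace is well defined and concentrates on closed orbits. A standard stationary-phase / transversality computation (using the Anosov assumption to guarantee non-degeneracy of $\id - \mathcal P_{x,t}$ at fixed points of $\varphi_t$) yields
\begin{equation*}
\Tr^{\flat}\bigl(e^{-itP}\bigr) \;=\; \sum_{\gamma\in\mathcal G}\frac{l_{\gamma}^{\#}\,\Tr\alpha_{\gamma}}{|\det(\id-\mathcal P_{\gamma})|}\,\delta(t-l_{\gamma}), \qquad t>0,
\end{equation*}
the trace $\Tr\alpha_{\gamma}$ arising from taking the fiberwise trace of the parallel transport around $\gamma$, as in \eqref{eq:paralleltransport}.

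Next I would apply the flat trace to \eqref{eq:laplace}. For $\im\lambda>C_{0}$ the integral converges and the trace passes through (the sum is locally finite on $(0,\infty)$, and each term is handled on a compact time interval); this gives
\begin{equation*}
\Tr^{\flat}R(\lambda)\;=\;i\int_{0}^{\infty}e^{i\lambda t}\,\Tr^{\flat}\bigl(e^{-itP}\bigr)\,dt\;=\;i\sum_{\gamma\in\mathcal G}\frac{l_{\gamma}^{\#}\,\Tr\alpha_{\gamma}\,e^{i\lambda l_{\gamma}}}{|\det(\id-\mathcal P_{\gamma})|}.
\end{equation*}
Setting $\lambda=is$ identifies the right-hand side with $iF_{P}(s)$, so $F_{P}(s)=-i\,\Tr^{\flat}R(is)$. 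Meromorphic continuation of $R(\lambda)$ (Faure--Sj\"ostrand, as quoted after \eqref{eq:laplace}) then gives meromorphic continuation of $F_{P}$ to all of $\mathbb C$, with possible poles exactly at $s$ with $is$ a Pollicott--Ruelle resonance.

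To compute residues, insert the Laurent expansion \eqref{expansion} into $-i\,\Tr^{\flat}R(is)$. Each operator $(P-\lambda_{0})^{j-1}\Pi_{\lambda_{0}}$ has finite rank (Lemma \ref{lemma:genres}), so its flat trace equals its ordinary trace on $\ran\Pi_{\lambda_{0}}$; for $j\geq 2$ the operator is nilpotent on this finite-dimensional space and has zero trace, while for $j=1$ the trace of $\Pi_{\lambda_{0}}$ is its rank, namely $m_{P}(\lambda_{0})$. The change of variable $\lambda-\lambda_{0}=i(s-s_{0})$ with $\lambda_{0}=is_{0}$ then turns the simple pole $-m_{P}(\lambda_{0})/(\lambda-\lambda_{0})$ of $\Tr^{\flat}R(\lambda)$ into
\begin{equation*}
F_{P}(s)\;=\;\frac{m_{P}(is_{0})}{s-s_{0}}\;+\;\text{holomorphic},
\end{equation*}
yielding simple poles with residues $m_{P}(is_{0})$ as claimed. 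The main technical point is justifying the flat-trace/integral interchange and the vanishing of flat traces of the nilpotent terms for operators with wavefront in $E_{u}^{*}$; the first is handled by a standard cutoff argument combined with the $\mathcal H_{rG}$ mapping properties used in Lemma \ref{lemma:genres}, and the second reduces to finite-dimensional linear algebra once one recognises $\ran\Pi_{\lambda_{0}}$ as a finite-dimensional invariant subspace.
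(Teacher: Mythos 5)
Your proposal is correct and follows essentially the same route as the paper, which itself only sketches the argument by invoking Guillemin's trace formula for the twisted propagator and then deferring to \cite[Section 4]{DyZw16} for the Laplace-transform identification of $F_P$ with the flat trace of the resolvent and the residue computation via the Laurent expansion \eqref{expansion}. The one point worth being slightly more careful about is that $\Tr^\flat R(\lambda)$ is not literally defined because the resolvent kernel is singular at the diagonal at time $t=0$; the standard fix (used in \cite{DyZw16}) is to insert a factor $e^{-it_0P}$ for small $t_0>0$ before taking the flat trace, which shifts the support off the diagonal without affecting poles or residues.
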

\begin{proof}
	We give only a sketch of the proof here, as it follows from the work \cite{DyZw16}. The sum \eqref{eqntrace} converges by Lemma \cite[Lemma 2.2]{DyZw16} and as $\lVert{\alpha_{\gamma}}\rVert \leq C e^{Cl_\gamma}$ for some $C > 0$. Observe that by \eqref{eq:propagator'}, we have that the Schwartz kernel $K$ of the propagator $e^{-itP}$, as a distribution $K(t, y, x) \in \mathcal{D}'(\mathbb{R} \times M \times M)$ satisfies $WF(K) \subset N^*S$, where $S = \{(t, \varphi_t(x), x) : x \in M,\, t \in \mathbb{R}\}$ and $N^*S$ denotes the conormal bundle of $S$. Therefore, Guillemin's trace formula \cite[Appendix B]{DyZw16} applies to give, for $t > 0$
	\[\Tr^\flat e^{-itP}|_{C^\infty(M; \mathcal{E})} = \sum_{\gamma \in \mathcal{G}} \frac{l_{\gamma}^\# \Tr \alpha_{\gamma} \delta(t - l_\gamma)}{|\det(\id - \mathcal{P}_\gamma)|}. \]
	All that we are left to do, is to note that the remainder of the proof in \cite[Section 4]{DyZw16} is not sensitive to changing $\varphi_{-t}^*$ to a general propagator $e^{-itP}$ for $P$ as above. This completes the proof.
	
	Alternatively, the whole statement follows from the more general work \cite[Theorem 4]{DyG16} on open systems.
\end{proof}

We now prove the meromorphic extension of the zeta function using the meromorphic continuation of the trace above.

\begin{prop}
The zeta function $\zeta_A(s)$ is given by
\begin{align}
\zeta_A(s) = \prod_{\gamma^\#} \det{\big(\id - \alpha_{\gamma^\#} e^{-sl_{\gamma}^\#}\big)}
\end{align}
for large $\re s$ and holomorphic in that region. Moreover, it has a meromorphic extension to the whole of $\mathbb{C}$ and the poles and zeros of the extension are determined by Pollicott-Ruelle resonances of $P = -i \iota_X d_A + \Phi$ acting on differential forms with values in $\E$.
\end{prop}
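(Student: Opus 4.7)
The plan is to take the logarithmic derivative of the product defining $\zeta_A(s)$ and express it as an alternating sum of the dynamical traces $F_{P_k}(s)$ of Theorem \ref{merotrace}, applied to the operator $P$ restricted to $\E$-valued differential forms of each degree. Meromorphic continuation of $\zeta_A$ then follows from the meromorphy of each $F_{P_k}$, and the zeros and poles are read off from the residues.

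First, for $\re s$ sufficiently large, the estimate $\|\alpha_{\gamma^\#}\| \leq C e^{C l_{\gamma^\#}}$ together with the standard exponential upper bound on the number of primitive orbits of bounded period gives absolute convergence of the product, so in particular $\zeta_A(s)$ is holomorphic in a right half-plane. Taking logarithms, expanding $\log\det(\id - B) = -\sum_{k\geq 1}\tfrac{1}{k}\Tr(B^k)$, and reindexing the double sum over primitive orbits and their iterates as a single sum over all closed orbits $\gamma \in \mathcal{G}$ yields
\[
\frac{\zeta_A'(s)}{\zeta_A(s)} \;=\; \sum_{\gamma \in \mathcal{G}} l_\gamma^\#\,\Tr(\alpha_\gamma)\, e^{-sl_\gamma}.
\]
To match this with the $F_{P_k}$ I would use the algebraic identity $\det(\id - A) = \sum_{k=0}^{n-1}(-1)^k \Tr(\wedge^k A)$ valid for any endomorphism $A$ of an $(n-1)$-dimensional vector space, applied to $A = \mathcal{P}_\gamma$ acting on $\Omega^1_0|_\gamma$. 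Combined with the orientability assumption \eqref{eq:detsign}, multiplying and dividing by $|\det(\id - \mathcal{P}_\gamma)|$ gives
\[
\frac{\zeta_A'(s)}{\zeta_A(s)} \;=\; (-1)^\beta \sum_{k=0}^{n-1} (-1)^k \sum_{\gamma \in \mathcal{G}} \frac{l_\gamma^\#\,\Tr(\alpha_\gamma)\,\Tr(\wedge^k \mathcal{P}_\gamma)}{|\det(\id - \mathcal{P}_\gamma)|}\, e^{-sl_\gamma}.
\]
By \eqref{eq:propagator''} and \eqref{eq:paralleltransport}, the propagator of the operator $P_k$ induced by $P$ on sections of $\Omega^k_0 \otimes \E$ acts along each closed orbit by $\wedge^k\mathcal{P}_\gamma \otimes \alpha_\gamma$, whose trace is $\Tr(\wedge^k\mathcal{P}_\gamma)\cdot\Tr(\alpha_\gamma)$; hence the inner sum is precisely $F_{P_k}(s)$.

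Finally, Theorem \ref{merotrace} applied to each $P_k$ extends $F_{P_k}(s)$ meromorphically to $\mathbb{C}$ with simple poles of integer residue $m_{P_k}(is)$ located at the points $s$ where $is$ is a Pollicott--Ruelle resonance of $P_k$. A finite alternating sum of such meromorphic functions inherits these properties, so $\zeta_A'/\zeta_A$ extends meromorphically to $\mathbb{C}$ with integer residues, which is precisely the criterion ensuring that $\zeta_A(s)$ itself admits a meromorphic extension. The order of vanishing at $s_0$ then equals $(-1)^\beta\sum_{k=0}^{n-1}(-1)^k m_{P_k}(is_0)$, so the zeros and poles of $\zeta_A$ are determined by Pollicott--Ruelle resonances of $P$ on $\E$-valued forms.

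The main technical obstacle is the second step: verifying that $P$ descends to the subbundle $\Omega^k_0 \otimes \E$ so that $P_k$ is well-defined (using that $\iota_X$ commutes with $\Lapl_X$), identifying its propagator's parallel transport along closed orbits as the tensor product $\wedge^k\mathcal{P}_{x,t} \otimes \alpha_{x,t}$ via \eqref{eq:propagator'}--\eqref{eq:propagator''}, and carefully bookkeeping the signs imposed by the orientability assumption \eqref{eq:detsign}.
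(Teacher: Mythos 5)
Your proposal is correct and follows essentially the same route as the paper: expand $\log\det$ over iterates of primitive orbits, use the identity $\det(\id-\mathcal{P}_\gamma)=\sum_k(-1)^k\Tr(\wedge^k\mathcal{P}_\gamma)$ together with the orientability assumption \eqref{eq:detsign}, identify each inner sum with the flat trace $F_{P_k}$ of the induced operator on $\Omega^k_0\otimes\E$ via the tensor-product structure $\beta_{k,x,t}=\alpha_{x,t}\otimes\wedge^k\mathcal{P}_{x,t}$ of the propagator, and apply Theorem \ref{merotrace}. The only cosmetic difference is that you work directly with the logarithmic derivative $\zeta_A'/\zeta_A$ rather than with $\log\zeta_A$ and the antiderivatives $g_k$, which changes nothing of substance.
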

\begin{proof}
We follow the now standard procedure of writing $\log \zeta_A$ as an alternating sum of traces of maps between bundles of differential forms with values in a vector bundle (see \cite[eq. (2.5)]{DyZw16}, originally due to Ruelle \cite{Ru76}). We write for large $\re s$

\begin{align}\label{eq:formalcompute}
\begin{split}
\log \zeta_A(s) &= \sum_{\gamma^{\#}} \log \det (\id - \alpha_{\gamma^{\#}} e^{-sl_{\gamma}^\#}) = \sum_{\gamma^\#} \Tr \log (\id - \alpha_{\gamma^\#} e^{-sl_\gamma^\#})\\
 &= -\sum_{\gamma^\#, j} \frac{\Tr \big(\alpha_{\gamma^\#}^j\big) e^{-jsl_{\gamma}^{\#}}}{j} = -\sum_\gamma \Tr(\alpha_\gamma) e^{-sl_\gamma} \frac{l_{\gamma}^\#}{l_\gamma}\\
&= \sum_{k = 0}^{n - 1} (-1)^{k + \beta + 1}\sum_{\gamma} \frac{\Tr(\wedge^k \mathcal{P}_\gamma)\Tr(\alpha_\gamma) e^{-sl_\gamma}}{|\det(\id - \mathcal{P}_\gamma)|} \frac{l_{\gamma}^\#}{l_\gamma}\\
&= \sum_{k = 0}^{n - 1} (-1)^{k + \beta} g_k(s).
\end{split}
\end{align}
We used the formula $\log \det (\id + A) = \Tr \log (\id + A)$ that works for $\lVert{A}\rVert$ small enough, the fact that there is a $C > 0$ such that $\lVert{\alpha_\gamma}\rVert \leq C e^{Cl_\gamma}$ and \cite[Lemma 2.2.]{DyZw16}. The function $g_k$ is defined as
\[g_k(s) = -\sum_{\gamma} \frac{\Tr(\wedge^k \mathcal{P}_\gamma)\Tr(\alpha_\gamma) e^{-sl_\gamma}}{|\det(\id - \mathcal{P}_\gamma)|} \frac{l_{\gamma}^\#}{l_\gamma}.\]
Also, we used the identity
\begin{align*}
\det(\id - \mathcal{P}_\gamma) = \sum_{k = 0}^{n - 1} (-1)^k \Tr(\wedge^k \mathcal{P}_\gamma),
\end{align*}
which comes from linear algebra. Introduce then
\begin{align}\label{eqnmercont1}
F_k(s) := -g_k'(s) = -\sum_\gamma \frac{\Tr(\wedge^k \mathcal{P}_\gamma)\Tr(\alpha_\gamma) e^{-sl_\gamma} l_{\gamma}^\#}{|\det(\id - \mathcal{P}_\gamma)|}.
\end{align}
This is reminiscent of the equation \eqref{eqntrace}. In fact, consider the vector bundle $\E_k := \Omega^k_0 \otimes \mathcal{E}$. We extend the action of $P$ on $\E$ to the action on $\E_k$ by the Leibnitz rule and denote the associated first order differential operator by $P_k$. We have, for $w \in C^\infty(M; \Omega_0^k)$ and $s \in C^\infty(M; \E)$
\begin{equation}\label{eq:tensorpde}
P_k (s \otimes w) = (-i \iota_X d_A + \Phi) (s \otimes w) = Ps \otimes w + s \otimes (-i \Lapl_X w).
\end{equation}
Then we observe that, by using \eqref{eq:tensorpde}
\begin{equation}\label{eq:tensorpropagator}
	(\partial_t + iP_k) (e^{-itP}s \otimes e^{-t\Lapl_X} w) = 0.
\end{equation}
Introduce the parallel transport $\beta_{k, x, t}: \E_k(x) \to \E_k(\varphi_t x)$ along the fibers of $\E_k$. Then by \eqref{eq:propagator'}, \eqref{eq:propagator''} and \eqref{eq:tensorpropagator}
\begin{multline}\label{eq:beta}
\beta_{k, x, t} \big(s(x) \otimes w(x) \big) = e^{-itP_k}\big(s \otimes w\big)(\varphi_t x) = e^{-itP}s(\varphi_t x) \otimes e^{-t \Lapl_X} w(\varphi_t x)\\
= \alpha_{x, t} (s(x)) \otimes \wedge^k \mathcal{P}_{x, t} (w(x)).
\end{multline}
We claim that for $k = 0, 1, \dotso, n-1$
\[F_{P_k}(s) = F_k(s).\]
To see this, observe that along a periodic orbit $\gamma$ of period $l_\gamma$ by \eqref{eq:beta} we have
\[\Tr(\beta_{k, \gamma}) = \Tr(\alpha_\gamma \otimes \wedge^k \mathcal{P}_\gamma) = \Tr(\alpha_{\gamma}) \cdot \Tr(\wedge^k \mathcal{P}_\gamma).\]
Here we write $\beta_{k, \gamma} = \beta_{k, x_0, l_\gamma}$, where $x_0$ is any point on $\gamma$. The trace $\Tr \beta_{k, \gamma}$ is independent of $x_0$. This proves the claim.

By Theorem \ref{merotrace} and an elementary argument, for each $k$ there exists a holomorphic function $\zeta_{k, A}(s)$ such that
\[\frac{\zeta_{k, A}'}{\zeta_{k, A}} = -F_k(s) = g_k'(s).\]
Thus by \eqref{eq:formalcompute} we obtain the following factorisation
\begin{align}\label{eq:zetafactorisation}
\zeta_A(s) = \prod_{k = 0}^{n - 1} \zeta_{k, A}^{(-1)^{k + \beta}}(s).
\end{align}
By Theorem \ref{merotrace}, $s \in \mathbb{C}$ is a zero of $\zeta_{k, A}(s)$ precisely when $is$ is a Pollicott-Ruelle resonance of $P_k$ and the multiplicity of the zero is equal to the Pollicott-Ruelle multiplicity at $is$.
\end{proof}

For convenience we re-state the factorisation above for $3$-manifolds.

\begin{corollary}\label{cor:3dfactor}
Consider a closed $3$-manifold $(M, g)$ with an Anosov flow $X$. Let $\mathcal{E}$ be a 
vector bundle over $M$ equipped with a 
connection $A$ and a 
potential $\Phi$. Then, assuming $E_s$ is orientable, we have the factorisation, where $\zeta_{k, A}$ is entire for $k = 0, 1, 2$
\begin{align}\label{eq:3dfactor}
\zeta_A(s) = \frac{\zeta_{1, A}(s)}{\zeta_{0, A}(s) \zeta_{2, A}(s)}.
\end{align}
Moreover, the order of zero at a point $s$ of $\zeta_A(s)$ is equal to
\begin{align}
m_{P_1}(is) - m_{P_0}(is) - m_{P_{2}}(is),
\end{align}
where $m_{P_k}(is)$ denotes the Pollicott-Ruelle resonance multiplicity at $is$ of the operators $P_k = -i\iota_Xd_A + \Phi$ acting on sections of the vector bundle $\E_k = \Omega_0^k(M) \otimes \mathcal{E}$ for $k = 0, 1, 2$.
\end{corollary}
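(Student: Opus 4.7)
The plan is to derive this corollary as a direct specialization of the factorization \eqref{eq:zetafactorisation} from the preceding proposition, without introducing any new ideas. First I would apply that proposition with $n = \dim M = 3$, so the product there runs over the three indices $k = 0, 1, 2$ (noting that $\Omega^3_0 = 0$ pointwise whenever $X \neq 0$, so there is no $k = 3$ term to worry about). This yields three entire factors $\zeta_{k,A}(s)$, each obtained by exponentiating an antiderivative of $F_k(s) = F_{P_k}(s)$ as constructed in the proof of the proposition.

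Next I would pin down the signs. Orientability of $E_s$ (together with $E_0 = \mathbb{R} X$ and the orientation induced on $E_u$ via the ambient orientation of $M$) forces $\beta = \dim E_s = 1$ in \eqref{eq:detsign}, so that the exponents $(-1)^{k+\beta}$ appearing in \eqref{eq:zetafactorisation} evaluate to $-1, +1, -1$ for $k = 0, 1, 2$. Rearranging puts $\zeta_{0,A}$ and $\zeta_{2,A}$ into the denominator and $\zeta_{1,A}$ into the numerator, producing exactly \eqref{eq:3dfactor}.

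For the order-of-vanishing statement, I would combine the factorization just obtained with the residue/multiplicity statement of Theorem \ref{merotrace} applied to the operator $P_k$ on $\E_k = \Omega^k_0 \otimes \mathcal{E}$. Indeed, the proof of the preceding proposition identified $F_{P_k}(s) = F_k(s) = -g_k'(s) = \zeta_{k,A}'(s)/\zeta_{k,A}(s)$, so the zeros of $\zeta_{k,A}$ at $s \in \mathbb{C}$ have multiplicity exactly $m_{P_k}(is)$. Summing with the signs from the previous paragraph, the order of $\zeta_A$ at $s$ becomes $m_{P_1}(is) - m_{P_0}(is) - m_{P_2}(is)$, as claimed.

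Since the corollary is truly a specialization and bookkeeping exercise, there is no substantial obstacle in the proof; the only point requiring a moment of care is the sign determination, which is fixed completely by the orientability hypothesis and the convention \eqref{eq:detsign}. All analytic content (meromorphy, identification of zero multiplicities with $m_{P_k}(is)$, convergence of the dynamical sums) has already been established in Theorem \ref{merotrace} and the proposition preceding the corollary.
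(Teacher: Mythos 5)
Your proposal is correct and coincides with how the paper intends this corollary to be read: it is a direct specialization of the general factorisation $\zeta_A(s) = \prod_{k=0}^{n-1} \zeta_{k,A}^{(-1)^{k+\beta}}(s)$ and Theorem \ref{merotrace} to $n = 3$ and $\beta = \dim E_s = 1$, and the paper supplies no separate argument beyond that. Your sign bookkeeping $(-1)^{k+1}$ for $k=0,1,2$ giving $-,+,-$ and the identification of zero orders with $m_{P_k}(is)$ are exactly right; the aside about $\Omega^3_0 = 0$ is harmless but unnecessary since the product already stops at $k = n-1 = 2$.
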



\section{Resonant spaces}\label{section:resonances} In this section we prove:

\begin{theorem}\label{thm:generalresonances} Let $(M,\Omega)$ be a closed 3-manifold with volume form $\Omega$ and let $\varphi_{t}$ be a volume preserving Anosov flow. Let $\mathcal E$ be a Hermitian vector bundle equipped with a unitary flat connection $A$. Then
\begin{enumerate}
\item $\text{\rm dim\,Res}_{0,A}(0)=\text{\rm dim\,Res}_{2,A}(0)=b_{0}(M,\mathcal E)$.
\item If $[\omega]\neq 0$, $\text{\rm dim\,Res}_{1,A}(0)=b_{1}(M,\mathcal E)-b_{0}(M,\mathcal E)$.
\item If $[\omega]=0$, then
\[\text{\rm dim\,Res}_{1,A}(0)=\left\{\begin{array}{ll}
b_{1}(M,\mathcal E)&\mbox{\rm if}\;\mathcal H (X)\neq 0\\
b_{1}(M,\mathcal E)+b_{0}(M,\mathcal E)&\mbox{\rm if}\;\mathcal H (X)=0.\\
\end{array}\right.\] 
\end{enumerate}
Moreover, $k$-semisimplicity holds for $k=0,2$.

\label{thm:dimensionA}

\end{theorem}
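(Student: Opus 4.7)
The cases $k=0$ and $k=2$ will be handled together via a pointwise algebraic identification, and the $k=1$ case will be analyzed through the sequence
\[
0 \longrightarrow K \longrightarrow \textnormal{Res}_{1, A}(0) \stackrel{D}{\longrightarrow} \textnormal{Res}_{0, A}(0),
\]
where $D$ is induced by $u \mapsto h$ with $d_A u = h \cdot \iota_X \Omega$, combined with a map $\Phi\colon K \hookrightarrow H^1_A(M; \mathcal{E})$ built from Lemma \ref{lemma:regularity}. The main obstacle will be identifying the images of $\Phi$ and $D$ via transport-equation obstructions paired against the (higher-dimensional) space of parallel sections, which is the source of the twisted Betti number $b_0(M, \mathcal{E})$ in the final formula.

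\textbf{The cases $k=0, 2$.} Since the pointwise kernel of $\iota_X$ on $\Omega^2$ is the line spanned by $\iota_X \Omega$, any $u \in \textnormal{Res}_{2, A}(0)$ has the form $u = s\,\iota_X \Omega$ with $s \in \mathcal{D}'_{E_u^*}(M; \mathcal{E})$; since $d(\iota_X \Omega) = 0$, the condition $\iota_X d_A u = 0$ reduces to $\iota_X d_A s = 0$, so $s \mapsto s\,\iota_X \Omega$ yields an isomorphism $\textnormal{Res}_{0, A}(0) \cong \textnormal{Res}_{2, A}(0)$ intertwining $\nabla_X$ with $\mathcal{L}_X^A|_{\Omega_0^2 \otimes \mathcal{E}}$, so it suffices to handle $k = 0$. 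The operator $P = -i \nabla_X$ on sections of $\mathcal{E}$ is formally self-adjoint (by $\mathcal{L}_X \Omega = 0$ and unitarity of $A$), so Lemma \ref{lemma2} gives $u \in C^\infty$ with $\nabla_X u = 0$. The 1-form $d_A u$ is then smooth, flow-invariant (flatness of $A$ yields $\mathcal{L}_X^A d_A = d_A \mathcal{L}_X^A$), pointwise of constant norm (unitarity of parallel transport), and vanishes on $X$; the Anosov contraction/expansion on $E_{s}, E_u$ forces $d_A u = 0$, so $\textnormal{Res}_{0, A}(0) = H^0_A(M; \mathcal{E})$ of dimension $b_0(M, \mathcal{E})$. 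Semisimplicity now follows from Lemma \ref{lemma:pairingsemisimple}: applying the same argument to $-X$ identifies $\textnormal{Res}^{(1)}_{-P^*}(0) \subset \mathcal{D}'_{E_s^*}$ with parallel sections as well, and the pairing $(u, v) \mapsto \int_M \langle u, v\rangle_{\mathcal{E}}\,\Omega$ restricts to the positive definite Hermitian inner product on $H^0_A(M; \mathcal{E})$, which is trivially non-degenerate. The isomorphism $s \mapsto s\,\iota_X \Omega$ then transfers both conclusions to $k = 2$.

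\textbf{Setup for $k = 1$.} For $u \in \textnormal{Res}_{1, A}(0)$ the same algebraic identity forces $d_A u = h \cdot \iota_X \Omega$ with $h$ uniquely determined; $d_A^2 u = 0$ and $d(\iota_X \Omega) = 0$ force $\iota_X d_A h = 0$, so $h \in \textnormal{Res}_{0, A}(0)$ is smooth parallel and in particular $d_A u$ is smooth. This makes $D$ well-defined; set $K := \ker D$. For $u \in K$ (so $d_A u = 0$), Lemma \ref{lemma:regularity} decomposes $u = v + d_A w$ with $v \in C^\infty(M; \Omega^1 \otimes \mathcal{E})$ closed and $w \in \mathcal{D}'_{E_u^*}(M; \mathcal{E})$; elliptic regularity for the twisted Hodge Laplacian $\Delta_A$ shows that $[v] \in H^1_A(M; \mathcal{E})$ is independent of the choice, yielding a linear map $\Phi\colon K \to H^1_A(M; \mathcal{E})$. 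Injectivity of $\Phi$ follows from the $k = 0$ statement applied to $W = f + w$ when $v = d_A f$: the constraint $\iota_X u = 0$ gives $\nabla_X W = 0$, forcing $W$ parallel and hence $u = d_A W = 0$.

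\textbf{Images and the three regimes.} Surjectivity analysis for $\Phi$: given smooth closed $v$, find $w \in \mathcal{D}'_{E_u^*}$ with $\nabla_X w = -\iota_X v$; by Fredholm duality at $\lambda = 0$ with the parallel co-resonant space, the obstructions are the functionals $\ell_s([v]) = \int_M \langle v, s\rangle \wedge \omega$ indexed by $s \in H^0_A(M; \mathcal{E})$, where the scalar 1-form $\langle v, s\rangle$ is closed by the unitary compatibility of $A$. When $[\omega] = 0$ all $\ell_s$ vanish by Stokes, so $\Phi$ is surjective and $\dim K = b_1(M, \mathcal{E})$; when $[\omega] \neq 0$, testing on $v = \alpha \otimes s$ for a closed scalar $\alpha$ with $\int_M \alpha \wedge \omega \neq 0$ shows $s \mapsto \ell_s$ is injective, cutting the image down by codimension exactly $b_0(M, \mathcal{E})$, so $\dim K = b_1(M, \mathcal{E}) - b_0(M, \mathcal{E})$. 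Dually, $h \in \mathrm{Im}(D)$ precisely when $h \iota_X \Omega = d_A u$ admits a solution $u \in \mathcal{D}'_{E_u^*}$ with $\iota_X u = 0$: if $[\omega] \neq 0$ this is already obstructed by $[h\omega] \neq 0$ in $H^2_A(M; \mathcal{E})$ (same injectivity argument via $s \mapsto [\omega \otimes s]$), while if $[\omega] = 0$ with primitive $\tau$, writing $u = h\tau + \tilde u$ with $\tilde u$ closed and decomposing $\tilde u = g + d_A w'$ reduces the condition $\iota_X u = 0$ to a second transport equation whose obstruction against parallel $s$ equals $\langle h, s\rangle \cdot \mathcal{H}(X)$ (using that $\langle h, s\rangle$ is a constant on $M$). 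Hence $\mathrm{Im}(D) = 0$ unless $[\omega] = 0$ and $\mathcal{H}(X) = 0$, in which case $\mathrm{Im}(D) = \textnormal{Res}_{0, A}(0)$. Summing $\dim K + \dim\mathrm{Im}(D)$ in the three regimes yields the three dimensions stated, completing the proof.
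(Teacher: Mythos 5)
Your proposal is correct and follows essentially the same route as the paper: your maps $D$ and $\Phi$ are the paper's $T$ and $S$, your obstruction functionals $\ell_s$ coincide with the paper's pairings of classes against parallel sections (note $\eta(X)\,\Omega=\eta\wedge\omega$), and the solvability input is the paper's Proposition \ref{prop:existence}. The only cosmetic difference is that you derive $0$-semisimplicity from the non-degeneracy of the resonant/co-resonant pairing on parallel sections rather than by the paper's direct integration by parts.
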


In particular, as a consequence we obtain

\begin{proof}[Proof of Theorem \ref{thm:volpres}]
	This is a direct consequence of Theorem \ref{thm:generalresonances} applied to trivial bundle $\E = M \times \mathbb{C}$ and the trivial connection $d_A = d$.
\end{proof}

We break down the proof of Theorem \ref{thm:generalresonances} into the following subsections. 

\subsection{Smooth invariant 1-forms} We first show that smooth resonant 1-forms are zero. The idea is that an invariant $1$-form decays along the stable direction in the future and in the unstable direction in the past, so must vanish. This first subsection is quite general and holds in any dimensions for any unitary connection $A$ and Hermitian matrix field $\Phi$. Recall that $\Omega_{0}^k=\Omega^k\cap \text{ker}\,\iota_{X}$.

\begin{lemma}\label{lemma:ressmoothtrivial}
        We have that
	\begin{equation}
		\text{\rm Res}_{1, A, \Phi}(0) \cap C^\infty(M; \Omega_0^1 \otimes \mathcal{E}) = \{0\}.
	\end{equation}
\end{lemma}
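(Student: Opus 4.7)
The plan is to exploit the fact that an invariant smooth $1$-form in $\ker \iota_X$ is forced to vanish on the stable and unstable bundles by the Anosov contraction/expansion, together with unitarity of the parallel transport along the flow.

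First, I would observe that for $u \in C^\infty(M; \Omega_0^1 \otimes \mathcal{E})$ with $\iota_X u = 0$ and $\iota_X d_A u = 0$, the twisted Lie derivative $\Lapl_X^A u := (d_A \iota_X + \iota_X d_A) u$ vanishes. Using the parallel transport $\alpha_{x,t} : \mathcal{E}(x) \to \mathcal{E}(\varphi_t x)$ from \eqref{eq:paralleltransport} (note we only transport along integral curves of $X$, so flatness of $A$ is inessential here and only unitarity matters), this invariance is equivalent to the pointwise identity
\[
u(x)(v) = \alpha_{x,t}^{-1}\, u(\varphi_t x)\bigl( d\varphi_t(x)\, v\bigr), \qquad x\in M,\ v\in T_x M,\ t\in\mathbb{R}.
\]

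Next, taking fiberwise norms and using that $\alpha_{x,t}$ is an isometry (since $A$ is unitary), I would deduce
\[
|u(x)(v)|_{\mathcal{E}(x)} = |u(\varphi_t x)(d\varphi_t(x) v)|_{\mathcal{E}(\varphi_t x)} \leq \|u\|_{C^0}\,|d\varphi_t(x) v|.
\]
For $v \in E_s(x)$, the right-hand side tends to $0$ as $t \to +\infty$ by the Anosov contraction estimate, so $u(x)(v)=0$. Symmetrically, for $v \in E_u(x)$, letting $t \to -\infty$ yields $u(x)(v)=0$. Combined with $\iota_X u = 0$ and the splitting $T_xM = \mathbb{R}X(x) \oplus E_s(x) \oplus E_u(x)$, this forces $u(x) = 0$ for every $x$, hence $u\equiv 0$.

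There is no real obstacle: the proof is a direct consequence of the Anosov property, the smoothness of $u$ (which makes evaluation on the merely continuous bundles $E_s, E_u$ legitimate), and the isometry property of the parallel transport. If the potential $\Phi$ is present and Hermitian so that $P$ is formally self-adjoint, the same argument runs verbatim after observing that on $\ker \iota_X$ the equation $Pu = 0$ still forces $\Lapl_X^A u = 0$ up to a self-adjoint pointwise term whose contribution to $\tfrac{d}{dt}|u|^2$ vanishes; alternatively, one can appeal directly to Lemma \ref{lemma2}.
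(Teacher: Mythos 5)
Your proposal is correct and follows essentially the same route as the paper: express the invariance of $u$ via the (unitary) parallel transport along the flow, take fiberwise norms, and use the Anosov contraction on $E_s$ as $t\to+\infty$ and on $E_u$ as $t\to-\infty$ together with $\iota_X u=0$ to conclude $u\equiv 0$. The paper likewise treats the Hermitian potential $\Phi$ by noting that the transport defined by $P=-i\iota_Xd_A+\Phi$ is still unitary, exactly as in your closing remark.
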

\begin{proof}
        We start by proving the following formula, which holds for any $u \in C^\infty(M; \Omega^k \otimes \E)$
        \begin{equation}\label{eq:formula}
            \alpha_{x, t}\big(u_x(\xi^k)\big) = e^{-t(\iota_X d_A + i\Phi)}u_{\varphi_tx}\big((\wedge^k d\varphi_t) \xi^k\big).
        \end{equation}
        Here $\xi^k \in \Lambda^k_xM$ is a $k$-vector and $x$ is any point in $M$. The definitions of $\alpha_{x, t}$ are given in \eqref{eq:paralleltransport} and \eqref{eq:propagator'}.
        
        Note firstly that it suffices to prove the claim above for $u = s \otimes w$, where $w$ a $k$-form and $s$ a section of $\mathcal{E}$, since we can write $u$ as a sum of such terms near $x$ and a term which is zero close to $x$. But this follows from equation \eqref{eq:beta} and by the definition of the map $\mathcal{P}_{x, t}$.
        
        If $u \in \text{Res}_{1, A, \Phi}(0) \cap C^\infty(M; \Omega_0^1 \otimes \mathcal{E})$ we must have $(-i\iota_{X}d_{A} + \Phi) u = 0$ and $\iota_X u = 0$. This further implies $e^{-t(\iota_{X}d_{A} + i\Phi)} u = u$, since $(\partial_t + \iota_{X}d_{A} + i\Phi)u = 0$. Then by \eqref{eq:formula} for $k = 1$ and $\xi \in E_s(x)$
        \begin{equation}
            |u_x(\xi)| = |\alpha_{x, t}u_x(\xi)| = |u_{\varphi_t x}(d\varphi_t \xi)| \lesssim |d\varphi_t \xi|_g \lesssim e^{-\lambda t}, \quad t > 0.
        \end{equation}
        Here we used that $\alpha_{x, t}$ is a unitary isomorphism\footnote{This can be shown as follows. Fix $x \in M$ and take two parallel sections $u_1$ and $u_2$ of $\E$ along the orbit $\{\varphi_tx : t \in \mathbb{R}\}$, solving locally in some trivialisation $(\partial_t + A(\partial_t) + i \Phi)u_j = 0$ for $j = 1, 2$. Then $\partial_t \langle{u_1, u_2}\rangle_{\E(\varphi_tx)} = \langle{(\partial_t + A(\partial_t)) u_1, u_2}\rangle + \langle{u_1, (\partial_t + A(\partial_t)) u_2}\rangle = -i \langle{\Phi u_1, u_2}\rangle + i\langle{u_1, \Phi u_2}\rangle = 0$, as $d_A$ is unitary and $\Phi$ is Hermitian. Therefore the parallel transport preserves inner products and $\alpha_{x, t}$ is unitary.}, the Anosov property of $X$ and that $t > 0$ in the last inequality. By taking the limit $t \to \infty$, we get $u$ is zero in the direction of $E_s$. Similarly, we get that $u$ is zero in the direction of $E_u$, so $u$ is zero.
\end{proof}

\begin{rem}\rm
    The above method shows that for an arbitrary smooth $k$-form $u \in \text{Res}_{k, A, \Phi}(0)$, we have $u|_{\wedge^k E_u} = 0$ and $u|_{\wedge^k E_s} = 0$, and more generally one could compare rates of contraction and expansion to obtain vanishing on larger subspaces. Other components can be non-zero, as can be seen e.g. below from the computation for $\text{Res}_{2, A}(0)$ for $A$ flat.
\end{rem}

\subsection{$\text{\rm Res}_{0, A}(0)$ and $\text{\rm Res}_{2, A}(0)$} Recall that $\omega=i_{X}\Omega$ and assume from now on that $A$ is flat.

\begin{lemma}\label{lemma:k=02noncontact}
	We have:
	\begin{align}\label{k=0}
		\text{\rm Res}_{0, A}(0) &= \{s \in C^\infty(M; \mathcal{E}):\; d_{A}s = 0\}=H_{A}^{0}(M,\mathcal E),\\
		\text{\rm Res}_{2, A}(0) &= \{s\,\omega :\; s \in C^\infty(M; \mathcal{E}), \, d_{A}s = 0\}.\label{k=2}
	\end{align}
	Moreover, $k$-semisimplicity holds for $k=0,2$.
\end{lemma}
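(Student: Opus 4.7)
The plan is to reduce the computations of $\textnormal{Res}_{0,A}(0)$ and $\textnormal{Res}_{2,A}(0)$ to the vanishing of smooth invariant $1$-forms (Lemma \ref{lemma:ressmoothtrivial}) together with the distributional regularity input of Lemma \ref{lemma2}, and then to verify semisimplicity by pairing a generalized resonant state against a smooth parallel section.

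For $\textnormal{Res}_{0,A}(0)$: given $u \in \mathcal{D}'_{E_u^*}(M;\mathcal{E})$ with $\iota_X d_A u = 0$, set $P = -i\iota_X d_A$. Since $X$ preserves $\Omega$ and $A$ is unitary, $P$ is formally self-adjoint on $L^2(M;\mathcal{E})$, so Lemma \ref{lemma2} applied to $Pu = 0$ yields $u \in C^\infty$. Flatness of $A$ gives $d_A(d_A u) = 0$, hence $d_A u \in \textnormal{Res}_{1,A}(0) \cap C^\infty(M; \Omega^1_0 \otimes \mathcal{E})$, which is trivial by Lemma \ref{lemma:ressmoothtrivial}. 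Thus $d_A u = 0$ and the identification with $H^0_A(M;\mathcal{E})$ follows.

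For $\textnormal{Res}_{2,A}(0)$: on a $3$-manifold $\Omega^2_0$ is a rank-one subbundle globally trivialised by the nowhere vanishing $\omega$, so any $u \in \mathcal{D}'_{E_u^*}(M;\Omega^2 \otimes \mathcal{E})$ with $\iota_X u = 0$ can be written as $u = s\omega$ for some $s \in \mathcal{D}'_{E_u^*}(M;\mathcal{E})$. Using $d\omega = \mathcal{L}_X \Omega = 0$ one finds $d_A u = d_A s \wedge \omega$ and then $\iota_X d_A u = (\iota_X d_A s)\omega$ (since $\iota_X \omega = 0$). The condition $\iota_X d_A u = 0$ therefore reduces to $s \in \textnormal{Res}_{0,A}(0)$, giving the asserted description.

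For semisimplicity: in the $k=0$ case, suppose $\iota_X d_A u = v$ with $v \in \textnormal{Res}_{0,A}(0)$, i.e.\ $v$ is smooth and $d_A v = 0$. Since $v$ is smooth we may pair it distributionally with $u$; using that $\iota_X d_A$ is formally anti-self-adjoint,
\[
\|v\|^2_{L^2} = \langle \iota_X d_A u, v\rangle = -\langle u, \iota_X d_A v\rangle = 0,
\]
so $v = 0$ and $u \in \textnormal{Res}_{0,A}(0)$. For $k = 2$, write $u = s\omega$; the hypothesis $\iota_X d_A u \in \textnormal{Res}_{2,A}(0)$ translates to $(\iota_X d_A s)\omega = s'\omega$ with $s'$ smooth and $d_A$-closed, hence $\iota_X d_A s = s'$, and $0$-semisimplicity forces $s' = 0$. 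The main obstacle is reconciling the distributional nature of $u$ with the smooth, parallel output: without Lemma \ref{lemma2} one cannot bootstrap from wavefront control to smoothness, and without flatness the relation $d_A(d_A u)=0$ that pushes $d_A u$ into $\textnormal{Res}_{1,A}(0)$ (and then kills it via Lemma \ref{lemma:ressmoothtrivial}) would be lost.
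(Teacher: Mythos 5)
Your proof is correct and follows essentially the same approach as the paper: Lemma \ref{lemma2} for regularity, flatness plus Lemma \ref{lemma:ressmoothtrivial} to kill $d_A u$, the trivialization $u=s\omega$ of $\Omega^2_0\otimes\E$ with $d\omega=0$ and $\iota_X\omega=0$ to reduce $k=2$ to $k=0$, and pairing against the parallel target to obtain semisimplicity. The only cosmetic difference is in the $k=0$ semisimplicity step: you invoke formal anti-self-adjointness of $\iota_X d_A$ (equivalently self-adjointness of $P=-i\iota_Xd_A$, which uses $\mathcal L_X\Omega=0$ and $A$ unitary), whereas the paper arrives at the same cancellation by writing $\langle\iota_Xd_A s,u\rangle_\E=X\langle s,u\rangle_\E$ for $u$ parallel and then using $\int_M Xf\,\Omega=0$; these are the same computation.
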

\begin{proof} We distinguish the cases $k = 0$ or $2$.

    \textbf{Case $k = 0$.} If $s \in \text{Res}_{0, A}(0)$, then $s \in C^{\infty}(M; \mathcal{E})$ by Lemma \ref{lemma2}.  Since $A$ is flat, $d^2_{A}s=0$ and therefore $d_{A} s \in \text{Res}_{1, A}(0) \cap C^{\infty}(M, \Omega_0^1\otimes \mathcal{E})$ and by Lemma \ref{lemma:ressmoothtrivial} we have $d_{A} s = 0$. So in this case we get a bijection with the parallel sections of $\mathcal{E}$. 

    For semisimplicity, consider $s \in \mathcal{D}'_{E_u^*}(M; \mathcal{E})$ with $\iota_{X}d_{A} s =: v \in \text{Res}_{0, A}(0)$. Then $v \in C^{\infty}(M; \mathcal{E})$ by Lemma \ref{lemma2} and $v$ is parallel by the previous paragraph. For $u \in C^\infty(M; \mathcal{E})$ parallel, since $d_{A}$ is unitary, we have:
    \begin{equation}
	    \int_M \langle{\iota_{X}d_{A}s, u}\rangle_{\mathcal{E}}\,\Omega = \int_M X\langle{s, u}\rangle_{\E}\,\Omega = 0.
    \end{equation}
    By picking $u = v$, we get $v = 0$ and so $s \in \text{Res}_{0, A}(0)$.
    
    \textbf{Case $k = 2$.} For $u \in \text{Res}_{2, A}(0)$, we may write
$u = s\omega$ for some distributional section $s \in \mathcal{D}'_{E_u^*}(M; \E)$. Then $\iota_{X}d_{A} u = 0$ implies $\iota_{X}d_{A} s = 0$, as $\Lapl_X \Omega = d\omega = 0$. By the analysis of $\text{Res}_{0, A}(0)$, we immediately get that $s$ is parallel.
    
    For semisimplicity, assume $\iota_{X}d_{A} u = v \in \text{Res}_{2, A}(0)$ with $u \in \mathcal{D}'_{E_u^*}(M; \Omega_0^2 \otimes \E)$. So $u = s \omega$ for some $s \in \mathcal{D}'_{E_u^*}(M; \E)$ and $v = s' \omega$ with $s'$ smooth and parallel. Therefore $s' = \iota_{X}d_{A}s \in \text{Res}_{0, A}(0)$ and by semisimplicity in the $k = 0$ case, we obtain $s' = 0$.
\end{proof}

\begin{rem}\rm
In the proof of Lemma \ref{lemma:k=02noncontact}, the fact that $J(0) = 1$ in the case $k = 0$ also holds for $A$ non-flat and unitary. To see this, consider the spectral theoretic inequality, that holds for $\varphi \in C^\infty(M; \E)$
		\begin{equation}
			\lVert{(P - \lambda)\varphi}\rVert_{L^2} \cdot \lVert{\varphi}\rVert_{L^2} \geq \big|\im \big\langle{(P - \lambda)\varphi, \varphi}\big\rangle_{L^2}\big| = |\im \lambda| \lVert{\varphi}\rVert^2_{L^2}.
 		\end{equation}
 		Here we used that $P = P^*$ on $L^2$. Therefore $\lVert{R(\lambda)}\rVert_{L^2 \to L^2} \leq \frac{1}{|\im \lambda|}$ for $\im \lambda > 0$, which implies $J(0) = 1$. 
\end{rem}

\subsection{$\text{\rm Res}_{1, A}(0)$} Recall that $H^0_{A}(M;\mathcal E)$ is the space of parallel sections (i.e. smooth sections $s$ of $\mathcal E$ such that $d_{A}s=0$). We start with a solvability result along the lines of \cite[Proposition 3.3.]{DyZw17}.

\begin{prop}\label{prop:existence}
	Assume $X$ preserves a smooth volume form $\Omega$ and $A$ is unitary and flat. Let $f \in C^\infty(M; \mathcal{E})$ and assume $\int_M \langle{f, s}\rangle_{\E} \Omega = 0$ for all $s \in C^\infty(M; \mathcal{E})$ parallel. Then there exists $u \in \mathcal{D}'_{E_u^*}(M; \mathcal{E})$ such that $\iota_{X}d_{A} u = f$.
\end{prop}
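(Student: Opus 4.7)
The plan is to construct $u$ by inverting the operator $P := -i \iota_X d_A$ acting on $C^\infty(M; \mathcal{E})$ via its meromorphic resolvent $R(\lambda) = (P - \lambda)^{-1}$ at $\lambda = 0$. A preliminary observation is that, since $X$ preserves $\Omega$ and $A$ is unitary, a short integration-by-parts using $X\langle u,v\rangle = \langle \iota_X d_A u, v\rangle + \langle u, \iota_X d_A v\rangle$ and $\int_M X\varphi\,\Omega = 0$ shows that $P$ is formally self-adjoint on $L^2(M; \mathcal{E})$; hence the adjoint machinery of Subsection~\ref{subsec:rescores} is available. By the $k=0$ analysis in Lemma~\ref{lemma:k=02noncontact}, $\textnormal{Res}_{0,A}(0)$ coincides with $H_A^0(M; \mathcal{E})$, the finite-dimensional space of smooth parallel sections, and $0$-semisimplicity holds, so $J(0) = 1$.

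With this in hand, I will write down the Laurent expansion $R(\lambda) = R_H(\lambda) - \Pi_0/\lambda$ coming from \eqref{expansion}. Using that $P\Pi_0 = 0$ on the range, $(P-\lambda)\Pi_0 = -\lambda \Pi_0$, and the identity $(P-\lambda)R(\lambda) = \mathrm{Id}$ then reduces to $P R_H(0) = \mathrm{Id} - \Pi_0$ with $R_H(0) : C^\infty(M; \mathcal{E}) \to \mathcal{D}'_{E_u^*}(M; \mathcal{E})$. The candidate solution will be $u := -i R_H(0) f \in \mathcal{D}'_{E_u^*}(M; \mathcal{E})$, which satisfies $\iota_X d_A u = i P u = (\mathrm{Id} - \Pi_0) f$; the whole problem thus reduces to verifying that the hypothesis on $f$ forces $\Pi_0 f = 0$.

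This last step is the main obstacle, and I plan to handle it by identifying the co-resonant space of $P$ at zero via Lemma~\ref{lemma:adjoints}. Since $P^* = P$, the co-resonant states at $0$ are the resonances of $-P = i \iota_X d_A$, which is precisely the analogue of $P$ built from the time-reversed Anosov flow $-X$ (whose stable and unstable bundles are $E_u$ and $E_s$, respectively; the connection $A$ is unchanged). Applying Lemma~\ref{lemma:k=02noncontact} to $-X$, the co-resonant space at zero also coincides with the smooth parallel sections of $\mathcal{E}$, now sitting inside $\mathcal{D}'_{E_s^*}(M; \mathcal{E})$, and $\Pi'_0$ restricts to the identity on this subspace. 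Using $\Pi_0^* = \Pi'_0$ from Lemma~\ref{lemma:adjoints}, I will then compute, for any smooth parallel section $s$, $\langle \Pi_0 f, s\rangle_{L^2} = \langle f, \Pi'_0 s\rangle_{L^2} = \langle f, s\rangle_{L^2} = 0$. Since $\Pi_0 f$ itself lies in $H_A^0(M; \mathcal{E})$ and the $L^2$ inner product restricts to a positive-definite form there, this forces $\Pi_0 f = 0$, completing the proof. The delicate point is thus the symmetry between resonances and co-resonances at zero, which rests on applying the $k=0$ analysis of Lemma~\ref{lemma:k=02noncontact} simultaneously to the original and time-reversed Anosov flows.
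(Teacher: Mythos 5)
Your proposal is correct and follows essentially the same path as the paper: identify $J(0)=1$ via the $k=0$ semisimplicity of Lemma~\ref{lemma:k=02noncontact}, set $u=-iR_H(0)f$, and reduce to $\Pi_0 f=0$ using $P^*=P$, the identification $\ran\Pi_0=\ran\Pi_0'=H_A^0(M;\mathcal{E})$ (applying the $k=0$ analysis to both $X$ and $-X$), and $\Pi_0^*=\Pi_0'$ from Lemma~\ref{lemma:adjoints}. The only cosmetic difference is in the last step, where you conclude via positive-definiteness of the $L^2$ pairing on the finite-dimensional space $H_A^0(M;\mathcal{E})$, whereas the paper tests $\langle\Pi_0 f,g\rangle=0$ against all $g\in C^\infty(M;\mathcal{E})$; both are equally valid.
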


\begin{proof}
		Let us denote $P = -i \iota_X d_A$. By Lemma \ref{lemma:k=02noncontact} we have the $0$-semisimplicity and so $J(0) = 1$. Thus by \eqref{expansion} near zero, where $\Pi = \Pi_0$
		\begin{equation*}
		 	R(\lambda) = R_H(\lambda) - \frac{\Pi}{\lambda}.
		\end{equation*}
		Therefore, by applying $P - \lambda$ to this equation we obtain close to zero
		\begin{equation}\label{eq:resolvent+}
				(P - \lambda) R_H(\lambda) + \Pi_0 = \id.
		\end{equation}
		We introduce $u := -iR_H(0)f$, which lies in $\mathcal{D}'_{E_u^*}(M; \mathcal{E})$ by the mapping properties of $R_H(\lambda)$ in \eqref{expansion}.
Then, assuming $\Pi_0 f = 0$ we have by \eqref{eq:resolvent+}, evaluated at $\lambda = 0$
		\begin{equation*}
				f = f - \Pi_0f = PR_H(0)f = (iP)\big(-iR_H(0)f\big) = \iota_{X}d_{A} u.
		\end{equation*}
		Now we prove that $\Pi_0 f  = 0$. 
		 By Lemma \ref{lemma:genres} and Lemma \ref{lemma:k=02noncontact}, we get
		\begin{equation*}
				\ran(\Pi_0) = \ker(P|_{\mathcal{D}'_{E_u^*}(M; \E) })= \text{Res}_{0, A}(0) = H_A^0(M; \mathcal{E}).
		\end{equation*}
		Since $X$ is volume-preserving and $A$ is unitary, we have $P^* = P$. Therefore $\ran \Pi_0' = H_A^0(M; \E)$ analogously, where $\Pi_0'$ denote the spectral projector of $-P$ w.r.t. the flow $-X$. Now Lemma \ref{lemma:adjoints} gives $\Pi_0^* = \Pi_0'$ and so for any $g \in C^\infty(M; \E)$
		\begin{equation*}
			\langle{\Pi_0f, g}\rangle_{L^2} = \langle{f, \Pi_0^*g}\rangle_{L^2} = 0.
		\end{equation*}
		Thus $\Pi_0f = 0$, which concludes the proof.
\end{proof}

We proceed with

\begin{lemma}\label{lemma:T} There is a linear map $T:\text{\rm Res}_{1, A}(0)\to H^0_{A}(M;\mathcal E)$ such that
$d_{A}u=T(u)\omega$, where $u\in \text{\rm Res}_{1, A}(0)$. The map $T$ satisfies the following:
\begin{enumerate}
\item if $[\omega]\neq 0$ or $\mathcal H(X)\neq 0$, then $T$ is trivial;
\item if $\mathcal H(X)=0$, then $T$ is surjective.
\end{enumerate}
\end{lemma}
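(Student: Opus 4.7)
For $u \in \text{Res}_{1,A}(0)$, the condition $\iota_X d_A u = 0$ places $d_A u$ in distributional sections of the line bundle $\Omega_0^2 \otimes \mathcal{E}$, which is trivialised by the nowhere-vanishing section $\omega$. So one may write $d_A u = s\omega$ uniquely with $s \in \mathcal{D}'_{E_u^*}(M; \mathcal{E})$. Flatness of $A$ then gives $0 = d_A^2 u = d_A s \wedge \omega$, and the pointwise identity $\beta \wedge \omega = (\iota_X \beta)\Omega$ (valid for any 1-form $\beta$, scalar or $\mathcal{E}$-valued) forces $\iota_X d_A s = 0$. Since $-i\iota_X d_A$ is formally self-adjoint ($A$ unitary, $X$ volume-preserving), Lemma \ref{lemma2} upgrades $s$ to a smooth section, and Lemma \ref{lemma:ressmoothtrivial} applied to $d_A s$ then yields $d_A s = 0$. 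I set $T(u) := s \in H_A^0(M; \mathcal{E})$; linearity is immediate from uniqueness.

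\textbf{Vanishing, case (1).} Suppose first $[\omega] \neq 0$. By Lemma \ref{lemma:regularity}, I write $u = v + d_A w$ with $v \in C^\infty(M;\Omega^1 \otimes \mathcal{E})$, so $d_A v = s\omega$. A local computation in a unitary frame gives the signed Leibniz rule
\begin{equation*}
d\langle v, s\rangle = \langle d_A v, s\rangle - \langle v, d_A s\rangle,
\end{equation*}
which combined with $d_A s = 0$ reduces to $d\langle v, s\rangle = \|s\|^2 \omega$. Since $\|s\|^2$ is constant ($s$ parallel, $A$ unitary), $s \neq 0$ would exhibit $\omega$ as exact, contradicting $[\omega] \neq 0$. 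In the remaining subcase $[\omega] = 0$ and $\mathcal{H}(X) \neq 0$, I fix $\tau$ with $d\tau = \omega$ and pair $d_A u = s\omega$ against the smooth test form $s\tau$. Using $\omega \wedge \tau = \tau(X)\Omega$, one obtains
\begin{equation*}
\int_M \langle d_A u, s\tau\rangle = \|s\|^2 \int_M \tau(X)\Omega = \|s\|^2 \mathcal{H}(X).
\end{equation*}
On the other hand, distributional Stokes (applicable since $u \in \mathcal{D}'_{E_u^*}$ is paired with smooth forms) together with $d_A(s\tau) = s\omega$ identifies this with $\int_M \langle u, s\omega\rangle = \int_M \langle u, s\rangle \wedge \omega = \int_M \langle \iota_X u, s\rangle\,\Omega = 0$, using $\iota_X u = 0$. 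Hence $\mathcal{H}(X) \neq 0$ forces $s = 0$.

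\textbf{Surjectivity, case (2).} If $\mathcal{H}(X) = 0$, then in particular $[\omega] = 0$; fix $\tau$ with $d\tau = \omega$. Given $s_0 \in H_A^0(M; \mathcal{E})$, I try the ansatz $u = s_0\tau + d_A w$ with $w \in \mathcal{D}'_{E_u^*}(M; \mathcal{E})$. Flatness and $d_A s_0 = 0$ give $d_A u = s_0 \omega$ automatically, so $T(u) = s_0$ once I produce such a $w$; the remaining constraint $\iota_X u = 0$ reads $\iota_X d_A w = -s_0 \tau(X)$, a smooth equation in $C^\infty(M; \mathcal{E})$. By Proposition \ref{prop:existence}, this is solvable in $\mathcal{D}'_{E_u^*}$ provided $\int_M \langle s_0 \tau(X), s'\rangle\,\Omega = \langle s_0, s'\rangle\, \mathcal{H}(X) = 0$ for every parallel $s'$, which is precisely the hypothesis.

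\textbf{Main obstacle.} The most delicate step is the computation in the subcase $[\omega]=0$, $\mathcal{H}(X)\neq 0$: one must carefully verify the signed Leibniz rule for Hermitian pairings of $\mathcal{E}$-valued forms (a bookkeeping exercise in a unitary trivialisation, where the skew-Hermitian matrix of $A$ causes the connection contributions to cancel between the two pairings) and then apply distributional Stokes, which is valid because pairing a distributional form with a smooth test form is unambiguous on a closed manifold. The algebraic identity $\alpha \wedge \omega = \alpha(X)\Omega$ is a pointwise fact specific to dimension three and underlies both the vanishing and surjectivity arguments.
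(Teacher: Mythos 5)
Your proof is correct and follows essentially the same route as the paper: the same construction of $T$ via $d_Au\in\text{Res}_{2,A}(0)$, the same exactness argument from $d\langle v,s\rangle=\|s\|^2\omega$ when $[\omega]\neq 0$, and the same use of Proposition \ref{prop:existence} with the ansatz $s_0\tau+d_Aw$ for surjectivity. The only (harmless) variation is in the subcase $[\omega]=0$, $\mathcal H(X)\neq 0$, where you integrate $\langle d_Au,s\tau\rangle$ by parts directly against the smooth test form $s\tau$ instead of decomposing $u-T(u)\tau=\eta+d_AF$ via Lemma \ref{lemma:regularity} as the paper does; both yield $\|s\|^2\mathcal H(X)=0$.
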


\begin{proof} Let $u \in \text{Res}_{1, A}(0)$. Since $A$ is flat, $d_{A}^2=0$ and hence
 $d_{A} u \in \text{Res}_{2, A}(0)$ and so $d_{A} u = s \omega$ with $s$ parallel and smooth, by Lemma \ref{lemma:k=02noncontact}. If we set $T(u)=s$, this defines a linear map such that $d_{A}u=T(u)\omega$.

Next note that given parallel sections $p,q\in H^0_{A}(M;\mathcal E)$, the inner product $\langle q,p\rangle_{\E}$ is a constant function on $M$. By Lemma \ref{lemma:regularity} there is a smooth $v$ such that $d_{A}u=d_{A}v$.
We write
\[d\langle T(u),v\rangle_{\E} = \langle T(u),d_{A}v\rangle_{\E} = \|{T(u)}\|^2\omega\]
and observe that the left hand side is exact. Hence we must have $T\equiv 0$ if $[\omega]\neq 0$.

If $[\omega]=0$, we set $\omega=d\tau$ and thus
\[d_{A}(u-T(u)\tau)=0.\]
Using Lemma \ref{lemma:regularity}, we can write $u-T(u)\tau=\eta+d_{A}F$, where $\eta$ is a smooth 1-form with $d_{A}\eta=0$ and $F\in \mathcal D'_{E^{*}_{u}}(M;\mathcal E)$. Contracting with $X$ and taking (pointwise) inner product with $T(u)$ we derive
\begin{equation}
-\|T(u)\|^2\tau(X)=\varphi(X)+X\langle T(u),F\rangle_{\E}
\label{eq:aux1}
\end{equation}
where $\varphi$ is the smooth, closed 1-form $\varphi:=\langle T(u),\eta\rangle$.
But note that
\[\int_{M}\varphi(X)\Omega=\int_{M}\varphi\wedge d\tau=-\int_{M}d(\varphi\wedge\tau)=0.\]
Hence integrating \eqref{eq:aux1} yields
\[-\|T(u)\|^2\mathcal H(X)=0\]
and therefore $T\equiv 0$ if $\mathcal H(X)\neq 0$ thus showing item (1) in the lemma.

To show item (2) assume $\mathcal H(X)=0$ and let $s$ be a parallel section. We shall show that there is 
$u\in  \text{Res}_{1, A}(0)$ with $T(u)=s$.
Note that for any parallel section $p$
\[\int_{M}\langle s\tau(X),p\rangle_{\E}\,\Omega=\langle s, p\rangle_{\E}\,\mathcal H(X)=0.\] 
By Proposition \ref{prop:existence} there is an $F\in \mathcal D'_{E^{*}_{u}}(M;\mathcal E)$ such that
$\iota_{X}d_{A}F=s\tau(X)$ and hence $u:=s\tau-d_{A}F\in \text{Res}_{1, A}(0)$ and
$T(u)=s$ as desired.

\end{proof}

\begin{lemma}\label{lemma:k=1noncontact}
	There is an injection
	\begin{equation}
			\text{\rm Ker\,T} \hookrightarrow H_A^1(M; \mathcal{E}).
	\end{equation}
	The injection can be described as follows: let $u \in \text{\rm Ker\,T}$. Then there exists $F \in \mathcal{D}'_{E_u^*}(M; \mathcal{E})$, such that
	\begin{equation}
		u - d_{A}F \in C^\infty(M; \mathcal{E} \otimes \Omega^1)
	\end{equation}
	and also $d_{A}(u - d_{A}F) = 0$. The injection map is given by 
	\begin{equation}
		S: u \in \text{\rm Ker\,T} \mapsto [u - d_{A}F] \in H_A^1(M; \mathcal{E}).
	\end{equation}
An element $[\eta] \in H_A^1(M; \mathcal{E})$ is in the image of $S$ iff
\[\int_{M}\langle p,\eta(X)\rangle_{\E}\,\Omega=0\]
for any parallel section $p$.

\label{lemma:mapS}

\end{lemma}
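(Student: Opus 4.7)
The plan is to build $S$ from the regularity Lemma \ref{lemma:regularity}, verify injectivity using the identification $\textnormal{Res}_{0,A}(0) = H^0_A(M; \E)$ from Lemma \ref{lemma:k=02noncontact}, and characterize the image by combining Proposition \ref{prop:existence} with the self-adjointness of $P := -i\iota_{X}d_{A}$ on $L^2(M; \E, \Omega)$ (which relies on $X$ preserving $\Omega$ and $A$ being unitary).

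For $u \in \ker T$ we have $d_{A}u = 0$, so Lemma \ref{lemma:regularity} with $\Gamma = E_u^*$ and $k = 1$ produces $v \in C^\infty(M; \Omega^1 \otimes \E)$ and $F \in \mathcal{D}'_{E_u^*}(M; \E)$ with $u = v + d_{A}F$; flatness gives $d_{A}v = 0$, so $[v] \in H^1_A(M; \E)$ is meaningful. Independence of the auxiliary $F$ is standard: a second such decomposition forces $d_{A}(F - F') \in C^\infty$, and elliptic regularity (applied in local gauges where $d_A$ is gauged to $d$ since $A$ is flat) promotes $F - F'$ to a smooth section, giving $[v] = [v']$. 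We set $S(u) := [v]$. For injectivity, if $S(u) = 0$ then $v = d_{A}g$ with $g$ smooth, so $u = d_{A}G$ with $G := F + g \in \mathcal{D}'_{E_u^*}(M; \E)$; the hypothesis $\iota_{X}u = 0$ puts $G \in \textnormal{Res}_{0,A}(0)$, and Lemma \ref{lemma:k=02noncontact} then gives $G$ smooth and parallel, hence $u = d_{A}G = 0$.

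For the image description, a smooth $d_A$-closed $\eta$ represents some $S(u)$ with $u \in \ker T$ if and only if one can choose $F \in \mathcal{D}'_{E_u^*}(M; \E)$ making $u := \eta + d_{A}F$ satisfy $\iota_{X}u = 0$, i.e.\ solving $\iota_{X}d_{A}F = -\iota_{X}\eta$. Proposition \ref{prop:existence} solves this precisely when the smooth right-hand side $-\iota_{X}\eta$ is $L^2(\Omega)$-orthogonal to $H^0_A(M; \E)$, which (up to complex conjugation of the pairing) is the stated integral condition. The converse necessity follows by testing against a parallel section $p$: the distributional identity $\langle{PF, p}\rangle_{L^2} = \langle{F, Pp}\rangle_{L^2} = 0$ — valid because the wavefront condition on $F$ makes the pairing with the smooth $p$ (and with $Pp$) legitimate, and because $Pp = -i\iota_{X}d_{A}p = 0$ for parallel $p$ — directly yields $\int_{M}\langle{p, \iota_{X}\eta}\rangle_{\E}\,\Omega = 0$.

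The main technical subtlety is this last distributional self-adjointness step: one must rigorously transfer $P$ from the distribution $F \in \mathcal{D}'_{E_u^*}(M; \E)$ onto the smooth test section $p$, which requires invoking the wavefront-compatibility of $F$ with smooth sections so that all pairings are well-defined distributionally. Once this is clear, and once Lemmas \ref{lemma:regularity}--\ref{lemma:k=02noncontact} and Proposition \ref{prop:existence} are in hand, the remainder of the argument is essentially formal.
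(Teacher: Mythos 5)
Your proposal is correct and follows essentially the same route as the paper: the regularity lemma to produce the smooth representative, the identification $\textnormal{Res}_{0,A}(0)=H^0_A(M;\E)$ for injectivity, and Proposition \ref{prop:existence} for surjectivity onto the kernel of the winding-cycle functional. The only cosmetic difference is that you phrase the necessity of the integral condition via self-adjointness of $P=-i\iota_X d_A$ on $L^2(M;\E,\Omega)$, whereas the paper writes $\langle p,\eta(X)\rangle_{\E}=-X\langle p,F\rangle_{\E}$ and integrates against the invariant volume form — the same computation.
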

\begin{proof}
  Let $u \in \text{Ker\,T}$, so that $d_{A}u = 0$. By Lemma \ref{lemma:regularity} there is $F \in \mathcal{D}'_{E_u^*}(M; \mathcal{E})$ such that $u - d_{A}F \in C^{\infty}(M; \Omega^1 \otimes \mathcal{E})$. We claim that the class $[u - d_{A}F] \in H_A^1(M; \mathcal{E})$ is independent of our choice of $F$. Suppose there is a $G$ such that $u - d_{A}G$ is smooth and $d_{A}$-closed. Then $d_{A}(F-G) \in C^\infty(M; \Omega^1 \otimes \mathcal{E})$, so by Lemma \ref{lemma:regularity} (or ellipticity), $F-G$ is smooth and thus $u-d_{A}F$ and $u-d_{A}G$ belong to the same class.
	
	For injectivity, we assume that $u - d_{A}F$ is exact; so without loss of generality assume $u=d_{A}F$. Then $\iota_X u = 0$ implies $d_{A}F(X) = 0$, so by Lemma \ref{lemma:k=02noncontact} we have $F$ smooth and parallel, so $u = 0$. 

If $[\eta]$ is in the image of $S$, then $\eta=u-d_{A}F$ for some $F \in \mathcal{D}'_{E_u^*}(M; \mathcal{E})$.
Contracting with $X$, we see that $\eta(X)=-d_{A}F(X)$ and hence $\langle p,\eta(X)\rangle_{\E}=-X\langle p,F\rangle_{\E}$.
Integrating gives
\[\int_{M}\langle p,\eta(X)\rangle_{\E}\,\Omega=0.\]
Conversely, if the last integral is zero for all $p$, Proposition \ref{prop:existence} gives $F \in \mathcal{D}'_{E_u^*}(M; \mathcal{E})$ such that $-\eta(X)=d_{A}F(X)$ and $u:=\eta + d_{A}F\in \text{Ker\,T}$
and $Su=[\eta]$.

\end{proof}

And finally we can compute the rank of $S$ in terms of whether $X$ is null-homologous or not.

\begin{lemma}\label{lemma:S} We have:
\begin{enumerate}
\item $\text{\rm dim\,S(Ker\,T)}=b_{1}(M,\mathcal E)$ if $[\omega]=0$;
\item $\text{\rm dim\,S(Ker\,T)}=b_{1}(M,\mathcal E)-b_{0}(M,\mathcal E)$ if $[\omega]\neq 0$.
\end{enumerate}
\end{lemma}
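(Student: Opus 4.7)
The plan is to reformulate $S(\mathrm{Ker}\,T)$ as the kernel of an explicit obstruction map into $(H_A^0(M;\mathcal{E}))^*$ and to compute its rank in the two cohomological regimes. By Lemma \ref{lemma:mapS}, the image of $S$ consists of those classes $[\eta]\in H^1_A(M;\mathcal{E})$ that annihilate every parallel section under the pairing $\Phi([\eta])(p):=\int_M\langle p,\eta(X)\rangle_{\mathcal{E}}\,\Omega$. The first step is to verify that $\Phi$ is well-defined on cohomology: for $p\in H_A^0(M;\mathcal{E})$ and $\eta=d_Af$, the identity $\eta(X)\Omega=\eta\wedge\omega$ (valid on a $3$-manifold since $\eta\wedge\Omega=0$) combined with $d\langle p,f\rangle=\langle p,d_Af\rangle$ (since $A$ is unitary and $p$ is parallel) and Stokes' theorem make the integral vanish. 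Once $\Phi$ is well-defined, $S(\mathrm{Ker}\,T)=\mathrm{Ker}\,\Phi$, so by the rank-nullity theorem $\dim S(\mathrm{Ker}\,T)=b_1(M,\mathcal{E})-\mathrm{rank}\,\Phi$, and the lemma reduces to showing that $\mathrm{rank}\,\Phi=0$ when $[\omega]=0$ and $\mathrm{rank}\,\Phi=b_0(M,\mathcal{E})$ when $[\omega]\neq 0$.

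For the case $[\omega]=0$, write $\omega=d\tau$ and compute, using $\eta(X)\Omega=\eta\wedge\omega$,
\[
\Phi([\eta])(p)=\int_M\langle p,\eta\rangle\wedge d\tau=\int_M d\langle p,\eta\rangle\wedge\tau-\int_M d\bigl(\langle p,\eta\rangle\wedge\tau\bigr).
\]
The second integral vanishes by Stokes, and the first integrand is zero since $d\langle p,\eta\rangle=\langle d_Ap,\eta\rangle+\langle p,d_A\eta\rangle=0$ (both $p$ and $\eta$ are $d_A$-closed, and $A$ is unitary). Hence $\Phi\equiv 0$ and $\dim S(\mathrm{Ker}\,T)=b_1(M,\mathcal{E})$, proving (1).

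For the case $[\omega]\neq 0$, the strategy is to build enough classes in $H^1_A(M;\mathcal{E})$ to hit every functional on $H^0_A(M;\mathcal{E})$. Since $[\omega]\neq 0\in H^2(M;\mathbb{R})$, Poincar\'e duality furnishes a closed $1$-form $\alpha\in\Omega^1(M)$ with $c_\alpha:=\int_M\alpha\wedge\omega\neq 0$. For any parallel section $p_0\in H_A^0(M;\mathcal{E})$ the form $\eta_{p_0}:=p_0\,\alpha$ is $d_A$-closed because $d_A(p_0\alpha)=p_0\,d\alpha=0$, and for any parallel $p$,
\[
\Phi([\eta_{p_0}])(p)=\langle p,p_0\rangle_{\mathcal{E}}\,c_\alpha,
\]
using that $\langle p,p_0\rangle_{\mathcal{E}}$ is constant on $M$. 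Thus the composition $p_0\mapsto\Phi([\eta_{p_0}])$ is $c_\alpha$ times the (anti-linear) isomorphism $H_A^0\to (H_A^0)^*$ induced by the Hermitian inner product, and $\Phi$ is therefore surjective. This gives $\mathrm{rank}\,\Phi=b_0(M,\mathcal{E})$ and hence $\dim S(\mathrm{Ker}\,T)=b_1(M,\mathcal{E})-b_0(M,\mathcal{E})$, proving (2).

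The only genuine subtlety is the well-definedness of $\Phi$ on cohomology and the use of Poincar\'e duality in the twisted setting; once these are in place both cases are essentially Stokes' theorem combined with the pointwise identity $\eta(X)\Omega=\eta\wedge\omega$ and the unitarity of $A$, which ensures that inner products of parallel sections are constants that can be pulled out of integrals.
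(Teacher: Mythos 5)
Your proposal is correct and follows essentially the same route as the paper: you define the obstruction map $W$ (your $\Phi$) into $(H_A^0)^*$, use the pointwise identity $\eta(X)\Omega=\eta\wedge\omega$ and Stokes to show $W\equiv 0$ when $[\omega]=0$, and use Poincar\'e duality to produce classes $q\varphi$ showing $W$ is onto when $[\omega]\neq 0$. The only cosmetic difference is that you explicitly verify well-definedness of the pairing on cohomology, which the paper folds into the statement of Lemma \ref{lemma:mapS}.
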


\begin{proof} If $X$ is null-homologous, we write $\omega=d\tau$. We use Lemma \ref{lemma:mapS}
to show that $S$ is surjective. Consider $\eta \in H^1_A(M; \E)$ and $p \in H_A^0(M; \E)$. Since the $1$-form $\varphi:=\langle p,\eta\rangle$ is closed we have
\[\int_{M}\varphi(X)\Omega=\int_{M}\varphi\wedge d\tau=-\int_{M}d(\varphi\wedge\tau)=0\]
and item (1) follows.

Suppose now $[\omega]\neq 0$. We define a map $W:H^{1}_{A}(M,\mathcal E)\to (H_{A}^{0}(M,\mathcal E))^*$
by
\[W([\eta])(p):=\int_{M}\langle p,\eta(X)\rangle_{\E}\,\Omega.\]
By Lemma \ref{lemma:mapS} the image of $S$ coincides with the kernel of $W$. Thus, to prove item (2) it suffices to show that
$W$ is surjective.
By Poincar\'e duality there is a closed 1-form $\varphi$ such that
\[\int_{M}\varphi\wedge\omega\neq 0.\]
If $p$ and $q$ are parallel sections we compute
\[W([q\varphi])(p)=\langle p,q\rangle_{\E} \int_{M}\varphi(X)\,\Omega=\langle p,q\rangle_{\E} \int_{M}\varphi\wedge\omega\]
and hence $W$ is onto.
\end{proof}

We are now in shape to put the ingredients together and prove

\begin{proof}[Proof of Theorem \ref{thm:generalresonances}]
The theorem follows directly after applying Lemmas \ref{lemma:T} and \ref{lemma:S}.
\end{proof}

Putting together the material from this section and Section \ref{section:zeta} we obtain

\begin{proof}[Proof of Corollary \ref{corollary:zetaA}]
	The order of vanishing of $\zeta(s)$ is equal to $m_1(0) - m_0(0) - m_2(0)$ by Corollary \ref{cor:3dfactor}. By Theorem \ref{thm:generalresonances} we have that $m_0(0) = m_2(0) = b_0(M, \E)$ and $m_1(0) \geq \dim \res_{1, A}(0)$, which concludes the proof.
\end{proof}

Moreover, we obtain

\begin{proof}[Proof of Corollary \ref{corollary:zeta}]
	This is a direct consequence of Corollary \ref{corollary:zetaA} applied to the case $\E = M \times \mathbb{C}$ and the trivial connection $d_A = d$.
\end{proof}

\section{Examples}\label{section:ex}

In this section we consider a few non-contact examples of Anosov flows on the unit tangent bundle of a surface.
They illustrate the various cases in Theorem \ref{thm:volpres} and give specific deformations for Theorem \ref{thm:perturbationflow}.


\subsection{Structural equations.}\label{subsec:structureeqns} As a general reference for structural equations, see \cite[Chapter 7]{ST76}. For this section assume $(\Sigma, g)$ is a compact oriented negatively curved surface. Let $X$ be the geodesic vector field on the unit sphere bundle $S\Sigma$. Denote by $\pi: S\Sigma \to \Sigma$ the footpoint projection. Then, there are $1$-forms $\alpha$, $\beta$ and $\psi$ on $S\Sigma$ defined by, for $\xi \in T^*_{(x, v)} S\Sigma$
\begin{align}\begin{split}
	\alpha_{(x, v)}(\xi) &= \langle{v, d\pi(\xi)}\rangle_x,\\
    \beta_{(x, v)}(\xi) &= \langle{d\pi(\xi), iv}\rangle_x,\\
    \psi_{(x, v)}(\xi) &= \langle{\mathcal{K}(\xi), iv}\rangle_x.
    \end{split}
\end{align}
The $1$-form $\alpha$ is called the contact form. From the defining equation one obtains $\iota_X \alpha = 0$ and $\iota_X d\alpha = 0$, and $\Omega = -\alpha \wedge d\alpha$ is a volume form. Also, here $\mathcal{K}: TT\Sigma \to T\Sigma$ is the \emph{connection map}, i.e. projection along the horizontal subbundle, and $\psi$ is called the \emph{connection $1$-form}. The expression $iv$ denotes the vector $v$ rotated by an angle of $\frac{\pi}{2}$ (we fix an orientation). Explicitly, 
\begin{equation}
    \mathcal{K}_{(x, v)}(\xi) := \frac{DZ}{dt}(0) \in T_x \Sigma,
\end{equation}
where $\big(\gamma(t), Z(t)\big)$ is an arbitrary local curve in $T\Sigma$ with initial data $\big(\gamma(0), Z(0)\big) = (x, v)$ and $\big(\dot{\gamma}(0), \dot{Z}(0)\big) = \xi$; $\frac{D}{dt}$ denotes the Levi-Civita derivative along the curve. One can then show that $\{\alpha, \beta, \psi\}$ form a co-frame on $S\Sigma$, such that the following \emph{structural equations} (see \cite[p. 188]{ST76}) hold
\begin{align}\label{eq:structure1}
\begin{split}
    d\alpha &= \psi \wedge \beta,\\
    d\beta &= -\psi \wedge \alpha,\\
    d\psi &= -K\alpha \wedge \beta.
    \end{split}
\end{align}
From this, we deduce the following properties
\begin{equation}\label{eq:structure2}
    \iota_X \beta = \iota_X \psi = 0, \quad \iota_X d\beta = \psi, \quad \iota_X d\psi = -K\beta.
\end{equation}
Furthermore, there is a natural choice of metric on $S\Sigma$, called the \emph{Sasaki metric}. It is defined by the splitting
\[T_{(x,v)} S\Sigma = \mathbb{H}(x, v) \oplus \mathbb{V}(x, v) = \ker(\mathcal{K}(x, v)|_{S\Sigma}) \oplus \ker\big(d\pi(x, v)\big)\]
into \emph{horizontal} and \emph{vertical} subspaces, respectively. Then the new metric is defined as:
\begin{equation}
    \langle\langle{\xi, \eta}\rangle\rangle := \langle{\mathcal{K}(\xi), \mathcal{K}(\eta)}\rangle + \langle{d\pi(\xi), d\pi(\eta)}\rangle.
\end{equation}
It follows after some checking from relations \eqref{eq:structure1} and the definitions that $\{\alpha, \beta, \psi\}$ are an orthonormal co-frame for $T^*S\Sigma$ with respect to the Sasaki metric. This also yield an orthonormal dual frame $\{X, H, V\}$, respectively. We record the structural equations \eqref{eq:structure1} for these vector fields
\begin{align}\label{eq:structure3}
	\begin{split}[H, V] &= X,\\
	[V, X] &= H,\\
	[X, H] &= KV.
	\end{split}
\end{align}
Here $V$ is the generator of rotations in the vertical fibres.

We now use the Hodge star operator $\ast$ with respect to the Sasaki metric on $S\Sigma$ to write $\Lapl_X^* = -\ast \Lapl_X \ast$ on one forms. We also have an extra structure given by
\begin{equation}\label{eq:hodge}
	\alpha \wedge Ju = \ast u
\end{equation}
for $u$ section of $\Omega_0^1$. Here $J:\Omega_0^1 \to \Omega_0^1$ is the (dual) almost complex structure associated to the symplectic form $d\alpha$ on $\ker \alpha = \spann \{V, H\}$ and is given by
\[J(u_2 \beta + u_3\psi) = u_3 \beta - u_2 \psi, \quad J^2 = -\id.\] 
Therefore $(\Lapl_X^*)^k u = 0$ for some $k \in \mathbb{N}$ is equivalent to $\Lapl_X^k Ju = 0$ and we obtain
\begin{equation}\label{eq:cores1}
	\res_{-i\Lapl_X^*, \Omega_0^1}(0) = J^{-1} \res_{i\Lapl_X, \Omega_0^1}(0).
\end{equation}
In the next section we use this relation together with time-changes to derive an explicit expression for co-resonant states at zero.

\subsection{Time-reversal and resonant spaces}\label{subsec:reversal} Here we consider the action under pullback of the time-reversal map $R: S\Sigma \to S\Sigma$, given by $R(x, v) = (x, -v)$. We first collect the information on this action on the orthonormal frames and co-frames given in \eqref{eq:structure1} and \eqref{eq:structure3}.


\begin{prop}
	We have $R^*\alpha = -\alpha$, $R^*\beta = -\beta$ and $R^*\psi = \psi$. Similarly, we have $R^*X = -X$, $R^*H = -H$ and $R^*V = V$.
\end{prop}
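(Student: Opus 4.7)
The plan is to compute each identity directly from the geometric definitions in Subsection~\ref{subsec:structureeqns}, using the two basic features of the time-reversal $R(x,v)=(x,-v)$: it commutes with the footpoint projection, i.e.\ $\pi\circ R=\pi$, and it intertwines the geodesic flow with itself reversed in time.

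First I would handle the co-frame $\{\alpha,\beta,\psi\}$. Since $\pi\circ R=\pi$, we have $d\pi\circ dR=d\pi$, so the defining formula $\alpha_{(x,v)}(\xi)=\langle v,d\pi(\xi)\rangle$ immediately gives
\[
(R^{*}\alpha)_{(x,v)}(\xi)=\alpha_{(x,-v)}(dR\,\xi)=\langle -v,d\pi(\xi)\rangle=-\alpha_{(x,v)}(\xi),
\]
and the analogous one-line computation with $iv$ in place of $v$ yields $R^{*}\beta=-\beta$. For $\psi$, the nontrivial input is the identity $\mathcal{K}\circ dR=-\mathcal{K}$. To see this, I would pick a curve $t\mapsto(\gamma(t),Z(t))$ through $(x,v)$ realising $\xi$; then $R$ sends it to $(\gamma(t),-Z(t))$, whose velocity at $0$ is $dR(\xi)$, and $\tfrac{D(-Z)}{dt}(0)=-\tfrac{DZ}{dt}(0)$ gives the claim. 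Combining with the extra sign from $iv\mapsto -iv$ produces $R^{*}\psi=+\psi$.

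For the dual frame $\{X,H,V\}$, I would give geometric arguments rather than invoke duality (which would need the extra step of checking that $R$ is a Sasaki isometry). The geodesic flow satisfies $\varphi_{-t}\circ R=R\circ \varphi_{t}$, since reversing a geodesic direction reverses the time parametrisation; differentiating at $t=0$ gives $dR\circ X=-X\circ R$, i.e.\ $R_{*}X=-X$. The vertical flow $e^{sV}(x,v)=(x,e^{is}v)$ clearly commutes with $R$, yielding $R_{*}V=V$. Finally, from the structural equation $[V,X]=H$ and the naturality of the Lie bracket,
\[
R_{*}H=R_{*}[V,X]=[R_{*}V,R_{*}X]=[V,-X]=-H.
\]

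No step is genuinely hard; the only point where one has to be a little careful is the transformation of the connection $1$-form $\psi$, since $\mathcal{K}$ is not given by an explicit coordinate formula and one must unwind its definition via curves to see the sign. Everything else is a one-line chain-rule computation from the definitions.
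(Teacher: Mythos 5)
Your proof is correct. For the co-frame identities $R^*\alpha = -\alpha$, $R^*\beta = -\beta$, $R^*\psi = \psi$, you compute directly from the definitions, using $\pi\circ R = \pi$ and unwinding the definition of the connection map $\mathcal{K}$ via curves to get $\mathcal{K}\circ dR = -\mathcal{K}$; this is essentially identical to the paper's argument (the paper leaves the use of $\pi\circ R = \pi$ implicit and has a small typo in the curve adapted to $dR\,\xi$, which you get right).

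For the frame identities, you take a genuinely different route. The paper deduces $R^*X = -X$, $R^*H = -H$, $R^*V = V$ from the co-frame case in one line, by observing that contractions commute with pullbacks, i.e. $R^*(\iota_Z\omega) = \iota_{R^*Z}(R^*\omega)$; pairing $R^*X$ against $\alpha,\beta,\psi$ then forces $R^*X = -X$ and similarly for $H, V$. You instead argue geometrically: $R$ intertwines $\varphi_t$ with $\varphi_{-t}$ to get $R_*X = -X$, $R$ commutes with the fibre rotation to get $R_*V = V$, and then you close with the structural equation $[V,X]=H$ and naturality of the Lie bracket. Both are valid. The paper's duality argument is shorter and dispenses with all three at once; your route is more explicitly dynamical and does not require recalling the pullback-contraction identity, at the cost of invoking the structural equation $[V,X]=H$ from \eqref{eq:structure3}. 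Your aside that pure duality requires checking $R$ is a Sasaki isometry is not quite right: the paper's argument uses only that $\{\alpha,\beta,\psi\}$ is a co-frame dual to $\{X,H,V\}$ and that pullback and contraction commute, no metric compatibility of $R$ is needed.
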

\begin{proof}
We consider the co-frame case first. Simply observe that 
\[R^*\alpha_{(x, v)}(\xi) = \langle{-v, d\pi dR \xi}\rangle_x = -\alpha_{(x, v)}(\xi)\]
so $R^*\alpha = -\alpha$. Similarly 
\[R^*\beta_{(x, v)}(\xi) = \langle{-iv, d\pi dR \xi}\rangle_x = -\beta_{(x, v)}(\xi)\]
so $R^*\beta = -\beta$. Finally, recall that $\mathcal{K}(\xi) = \frac{DZ}{dt}(0)$, where $c(t) = \big(\gamma(t), Z(t)\big)$ is any curve in $T\Sigma$ with $\dot{c}(0) = \xi$. Therefore 
\[\mathcal{K}(dR \xi) = - \frac{DZ}{dt}(0) = -\mathcal{K}(\xi)\]
since $\tilde{c}(t) = \big(\gamma(t), -Z(t)\big)$ is the curve adapted to $-dR \xi$. Now we easily see that $R^*\psi = \psi$ from the definition.

The frame case follows from the co-frame case, since contractions commute with pullbacks. 
\end{proof}

Now note that in any unit sphere bundle $SN$ over an Anosov manifold $(N, g_1)$, the pullback by $R$ swaps the stable and unstable bundles. More precisely, we have
\[R^*E_{u, s}^X = E_{u, s}^{R^*X} = E_{u, s}^{-X} = E_{s, u}, \quad R^*E_0 = E_0.\]
The upper index denotes the vector field with respect to which we are taking the stable/unstable bundles. This follows from the fact that $R$ intertwines the flows of $X$ and $-X$. Thus we also have
\[R^*E_{u, s}^{*} = E_{s, u}^{*}, \quad R^*E_0^* = E_0^*.\]

The upshot is of course that $R^*$ is an isomorphism between resonant and co-resonant spaces, i.e. the ones with the wavefront set in $E_u^*$ and in $E_s^*$.
\begin{prop}\label{prop:symmetricpairing}
	The pairing \eqref{eq:pairingfull} between resonant and co-resonant states is equivalent to the pairing on
	\begin{equation}\label{eq:pairingsymmetric}
		\res_{-i\Lapl_X, \Omega_0^1}(0) \times \res_{-i\Lapl_X, \Omega_0^1}(0), \quad (u, v) := \int_{S\Sigma} u \wedge \alpha \wedge R^*\overline{v}.
	\end{equation}
	The pairing \eqref{eq:pairingsymmetric} is Hermitian (i.e. conjugate symmetric).
\end{prop}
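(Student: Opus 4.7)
The strategy is to construct an explicit linear isomorphism $\Phi$ from the resonant space $\res_{-i\Lapl_X, \Omega_0^1}(0)$ to the co-resonant space $\res_{-P^*}(0)$ which identifies the $L^2$-pairing \eqref{eq:pairingfull} with the wedge pairing \eqref{eq:pairingsymmetric}; conjugate symmetry will then follow directly from the invariance of $\Omega$ under $R$.

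The natural candidate is $\Phi(v) := J^{-1} R^* v$. To check it is well-defined I would first note that $R$ swaps $E_u^*$ and $E_s^*$, so $R^*v$ has wavefront in $E_s^*$; the relation $R^*X = -X$ from Subsection \ref{subsec:reversal} gives $R^*\iota_X = -\iota_X R^*$, and hence $R^*\Lapl_X = -\Lapl_X R^*$, so $R^*v$ lies in the generalised kernel of $\Lapl_X$ on $\mathcal{D}'_{E_s^*}(S\Sigma; \Omega_0^1)$. Composing with $J^{-1}$ and applying \eqref{eq:cores1} lands in $\res_{-P^*}(0)$, and invertibility of $J$ and $R^*$ makes $\Phi$ an isomorphism.

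Next I would compute $\langle u, \Phi(v)\rangle_{L^2}$ via the Hodge identity $\ast w = \alpha \wedge Jw$ from \eqref{eq:hodge}. Since $J$ and $R^*$ are real operators, $\overline{\Phi(v)} = J^{-1}R^*\bar v$, and so $\ast\overline{\Phi(v)} = \alpha \wedge J(J^{-1}R^*\bar v) = \alpha \wedge R^*\bar v$, giving
\[
\langle u, \Phi(v)\rangle_{L^2} = \int_{S\Sigma} u \wedge \ast\overline{\Phi(v)} = \int_{S\Sigma} u \wedge \alpha \wedge R^*\bar v = (u, v),
\]
which identifies the two pairings; non-degeneracy of \eqref{eq:pairingsymmetric} then follows from the non-degeneracy of \eqref{eq:pairingfull} established in Subsection \ref{subsec:rescores}.

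For the conjugate symmetry, the key input is $R^*\Omega = \Omega$, which comes from $R^*\alpha = -\alpha$ and $R^*d\alpha = -d\alpha$; consequently integration of top forms over $S\Sigma$ is $R$-invariant. Starting from $\overline{(v,u)} = \int_{S\Sigma} \bar v \wedge \alpha \wedge R^*u$, I would pull back by $R$ to obtain $-\int_{S\Sigma} R^*\bar v \wedge \alpha \wedge u$, and then commute the three $1$-forms using $R^*\bar v \wedge \alpha \wedge u = -u \wedge \alpha \wedge R^*\bar v$ to conclude $\overline{(v,u)} = (u,v)$. The main obstacle is simply the sign bookkeeping coming from $R^*\alpha = -\alpha$, $J^2 = -\id$, and the anticommutativity of $1$-forms under $\wedge$; no new analytic input beyond Section \ref{section:ex} is required.
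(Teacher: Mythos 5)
Your proof is correct and follows essentially the same route as the paper: the identification $\res_{-i\Lapl_X^*,\Omega_0^1}(0)=J^{-1}R^*\res_{-i\Lapl_X,\Omega_0^1}(0)$ via \eqref{eq:cores1} and the time-reversal properties of $R$, the Hodge identity $\ast w=\alpha\wedge Jw$ to convert the $L^2$ pairing into the wedge pairing, and a pullback by the orientation-preserving map $R$ together with sign bookkeeping for the conjugate symmetry. Your version is, if anything, slightly more explicit about the anticommutation $R^*\iota_X=-\iota_X R^*$, which is harmless at the eigenvalue $0$.
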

\begin{proof}
	We first claim that
\begin{equation}\label{eq:rescorestimereverse}
	\res_{-i\Lapl_X^*, \Omega_0^1}(0) = J^{-1} R^* \res_{-i\Lapl_X, \Omega_0^1}(0).
\end{equation}
This is obtained from \eqref{eq:cores1} and by observing that $v \in \res_{i\Lapl_X, \Omega_0^1}(0)$ if and only if $R^*v \in \res_{-i\Lapl_X, \Omega_0^1}(0)$, since $R^*$ commutes with $\iota_X$ and $d$, and as $R^*$ swaps $E_u^*$ and $E_s^*$ by the discussion above. Thus by another application of \eqref{eq:hodge}, we obtain \eqref{eq:pairingsymmetric}. 

For the symmetry part, observe that $R$ is orientation-preserving and
\[(u, v) = \int_{SM} u \wedge \alpha \wedge R^*\overline{v} = -\int_{SM} R^*u \wedge \alpha \wedge \overline{v} = \overline{(v, u)}.\]
\end{proof}

\subsection{Magnetic flows} These flows are determined by a smooth function $\lambda\in C^{\infty}(\Sigma)$.
The relevant vector field is $X_{\lambda}:=X+\lambda V$. A calculation using the structure equations shows
\[\iota_{X_{\lambda}}\Omega=-d\alpha+\lambda \alpha\wedge\beta=-d\alpha+\lambda\pi^*\sigma,\]
where $\sigma$ is the area form of $g$. If $\Sigma$ has negative Euler characteristic, then $K\sigma$ generates
$H^{2}(\Sigma)$ and thus there is a constant $c$ and a 1-form $\gamma$ such that
\[\lambda\sigma=cK\sigma+d\gamma.\]
Therefore
\[\iota_{X_{\lambda}}\Omega=-d\alpha+\lambda\pi^*\sigma=d(-\alpha-c\psi+\pi^*\gamma),\]
and hence $X_{\lambda}\in {\mathcal X}_{\Omega}^{0}$. If $X$ is Anosov and $\lambda$ is small, $X_{\lambda}$ remains Anosov. In general these flows are {\it not} contact, cf. \cite{DP}.

\subsection{Explicit flows with $[\omega]\neq 0$}\label{subsec:explicitWnonzero}

In this subsection, we construct explicit volume-preserving non null-homologous Anosov flows that are close to the geodesic flow on a compact oriented negatively surface $(\Sigma, g)$. Let $\theta \neq 0$ be a {\it harmonic} 1-form on $\Sigma$. At the level of $S\Sigma$ this can be seen in terms of two equations
\begin{align}\label{eq:thetaharmonic}
\begin{split}
X(\theta)+HV(\theta)&=0,\\
H(\theta)-XV(\theta)&=0.
\end{split}
\end{align}
This first is zero divergence, the second is $d\theta=0$. To check these equations one can argue as follows. We will use that $d\pi_{(x,v)}\big(X(x,v)\big)=v$ and
$d\pi_{(x,v)}\big(H(x,v)\big)=iv$. Given $\theta$, we consider $\pi^*\theta$ and note (using the standard formula for $d$ applied to $\pi^*\theta$):
\[d(\pi^*\theta)(X,H)=X\pi^*\theta(H)-H\big(\pi^*\theta(X)\big)-\pi^*\theta\big([X,H]\big).\]
By the structural equations, the term $[X,H]$ is purely vertical, hence it is killed by $\pi^*\theta$. Now one can check that $\pi^*\theta(H)(x,v)=\theta(iv)=V(\theta)=-(\ast \theta)(v)$ and $\pi^*\theta(X)=\theta(v)$.
Finally since
\[d(\pi^*\theta)(X,H)=\pi^*d\theta(X,H)=d\theta\big(d\pi(X),d\pi(H)\big)=d\theta(v,iv)\]
one obtains that $d\theta=0$ if and only if $H(\theta)-XV(\theta)=0$. The form $\theta$ has zero divergence if and only if $\ast\theta$ is closed so the first equation also follows.

We consider the vector field $Y:=\theta X+V(\theta)H$. This vector field is dual to the 1-form on $S\Sigma$ given
by $\pi^*\theta=\theta\alpha+V(\theta)\beta$. This form is closed as well as
$\varphi:=-V(\theta)\alpha+\theta\beta$ which is the pull back $\pi^*(\ast \theta)$.
We can easily check that $\varphi(Y)=0$ and $\pi^*\theta(Y)=[\theta]^2+[V(\theta)]^2$.

The flows we wish to consider are of the form $X_{\varepsilon}=X+\varepsilon Y$, where $X$ is the Anosov geodesic vector field and $\varepsilon$ is small so that it remains Anosov. Using the above we observe

\begin{itemize}
\item $X_{\varepsilon}$ preserves the volume form $\Omega=\alpha\wedge\beta\wedge\psi$. This is thanks to the fact
that $\theta$ has zero divergence.
\item $[\iota_{X_{\varepsilon}}\Omega]\neq 0$ for $\varepsilon\neq 0$. This is because  $\pi^*\theta(Y)=[\theta]^2+[V(\theta)]^2\geq 0$ and hence if $\theta$ is not trivial 
\begin{align}\label{eq:Wnonzero}
	\int_{S\Sigma}\pi^*\theta(X_{\varepsilon})\Omega=\varepsilon \int_{S\Sigma}\pi^*\theta(Y)\Omega\neq 0.
\end{align}

\end{itemize}

What we will prove in the coming sections is that $X_\varepsilon$ has a splitted resonance for one-forms near zero, and the semisimplicity does not break down.

\section{Perturbations}\label{section:perturbations}

In this section we study the behaviour of the Pollicott-Ruelle multiplicities under small deformations
and start with the proof of Theorem \ref{thm:localresonance}.

\subsection{Uniform anisotropic Sobolev spaces}\label{subsec:anisotropic}

We start by laying out the necessary tools to study perturbations of Anosov flows and associated anisotropic Sobolev spaces. We will follow the recent approach of Guedes Bonthonneau \cite{YGB19}, who constructs a uniform weight function that works in a neighbourhood of the initial vector field. For brevity, we will only outline the necessary details. We refer the reader to \cite{FaSj11} for more details in the case of a fixed vector field, and to \cite{DGRS18} for an alternative construction of a weight function that works for perturbed vector fields.
The use of anisotropic spaces in hyperbolic dynamics has its origins in the works of many authors, cf. \cite{Ba,BaT, 
BKL,BL,Li1,GoLi}.

Let $M$ be compact and $X_0$ an Anosov vector field. By \cite[Section 2]{YGB19}, there exists a $0$-homogeneous weight function $m \in C^\infty(T^*M \setminus 0)$ that applies to all flows with generators $\lVert{X - X_0}\rVert_{C^1} < \eta$, for some $\eta > 0$, in the sense to be explained. It satisfies, for all such $X$
\[m = 1 \text{ near } E_u^*, \quad m = - 1 \text{ near } E_s^*, \quad  X_*m \leq 0.\]
Here $X_*$ is the symplectic lift of $X$ to $T^*M$. We set $G(x, \xi) \sim m(x, \xi) \log (1 + |\xi|)$ for all $|\xi|$ large. The anisotropic Sobolev spaces are defined as, for $r \in \mathbb{R}$
\begin{equation}\label{eq:defanisotropicspace}
	\mathcal{H}_{h, rG} = \Op_h (e^{-rG}) L^2(M).
\end{equation}
Here $h > 0$ and $\Op_h$ denotes a semiclassical quantisation on $M$; we write $\Op := \Op_1$. We will write $\mathcal{H}_{rG} = \Op(e^{-rG})L^2(M)$. Frequently we consider a smooth vector bundle $\mathcal{E}$ over $M$ and in that case we consider the corresponding spaces $\mathcal{H}_{h, rG} = \Op_h(e^{-rG \times \id_{\E}}) L^2(M; \mathcal{E})$. We will write
\[\mathcal{H}_{h, rG + k \log \langle{\xi}\rangle} = \Op_{h}(e^{-rG}) H^k(M; \E).\]
We will use the special notation $\mathcal{H}_{rG, k} := \mathcal{H}_{1, rG + k \log \langle{\xi}\rangle} = \Op(e^{-rG})H^k(M; \E)$. We remark that the spaces $\mathcal{H}_{h, rG}$ for varying $h$ are all the same as sets, equipped with a family of distinct, but equivalent norms.

Let $X_\varepsilon$ be a smooth family of Anosov vector fields on $M$. Consider also a smooth family of differential operators $P_\varepsilon$ with principal symbol $\sigma(X_\varepsilon) \times \id_{\E}$. We will consider any $Q \in \Psi^{-\infty}(M; \E)$ compactly microsupported, self-adjoint operator, elliptic in the neighbourhood of the zero section in $T^*M$. Introduce now the spaces 
\[\mathcal{D}_{h, rG}^\varepsilon := \{u \in \mathcal{H}_{h, rG} : P_\varepsilon
u \in \mathcal{H}_{h, rG}\}\]
and equip them with the norm $\lVert{u}\rVert_{\mathcal{D}_{h, rG}^\varepsilon}^2 = \lVert{u}\rVert_{\mathcal{H}_{h, rG}}^2 + \lVert{hP_\varepsilon u}\rVert_{\mathcal{H}_{h, rG}}^2$. Completely analogously with $\mathcal{H}_{rG}$, we introduce $\mathcal{D}_{rG}^\varepsilon$, and also $\mathcal{D}_{rG, k}^\varepsilon$ for an integer $k$.

Then \cite[Lemma 9]{YGB19} states

\begin{lemma}\label{lemma:anisotropic}
	There exists an $\varepsilon_0 > 0$ such that the following holds. Given any $s_0 > 0$, $k \in \mathbb{Z}$ and $r > r(s_0)+|k|$, there is $h_k > 0$ such that for $0 < h < h_k$, $\im s > -s_0$, $|\re s| < h^{-\frac{1}{2}}$ and $|\varepsilon| < \varepsilon_0$,
\[P_\varepsilon - h^{-1}Q - s: \mathcal{D}_{h, rG + k\log \langle{\xi}\rangle}^\varepsilon \to \mathcal{H}_{h, rG + k\log\langle{\xi}\rangle}\]
is invertible and the inverse is bounded as $O(1)$ independently of $\varepsilon$.
\end{lemma}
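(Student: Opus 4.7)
The plan is to follow the Faure--Sj\"ostrand scheme of conjugating $P_\varepsilon$ by an exponential weight built from $G$, with the key improvement that the symbol $m$ is chosen once and for all, independently of $\varepsilon$, on a $C^1$-neighbourhood of $X_0$; this is precisely the content of \cite[Section~2]{YGB19}. All estimates will then be uniform for $|\varepsilon|<\varepsilon_0$ because the only $\varepsilon$-dependence in the symbolic calculus enters through $\sigma(X_\varepsilon)$ and its derivatives, which depend smoothly on $\varepsilon$.

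First I would reduce everything to an $L^2$ estimate by conjugation. Setting $A_{h,r}=\Op_h(e^{rG\cdot\id_\E})$ (with a symbolic inverse $A_{h,r}^{-1}$ modulo smoothing) and $\Lambda_{h,k}=\Op_h(\langle\xi\rangle^k\cdot\id_\E)$, the invertibility claim for $P_\varepsilon-h^{-1}Q-s$ on $\mathcal{H}_{h,rG+k\log\langle\xi\rangle}$ is equivalent to invertibility on $L^2$ of
\begin{equation*}
    \widetilde{P}_\varepsilon^{h,r,k}\;:=\;A_{h,r}\Lambda_{h,k}\bigl(P_\varepsilon-h^{-1}Q-s\bigr)\Lambda_{h,k}^{-1}A_{h,r}^{-1}.
\end{equation*}
By the logarithmic-weight semiclassical calculus developed in \cite{FaSj11,YGB19}, the principal symbol of $h\widetilde{P}_\varepsilon^{h,r,k}$ is $h\sigma(X_\varepsilon)-ihr\,X_{\varepsilon,*}G+O_{\Psi^{-1}}$. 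Since $G\sim m\log\langle\xi\rangle$ and $X_{\varepsilon,*}m\le 0$ uniformly for $|\varepsilon|<\varepsilon_0$ (away from an arbitrarily small conic neighbourhood of $E_0^*$), one obtains $\im\sigma\bigl(h\widetilde{P}_\varepsilon^{h,r,k}\bigr)\le -chr\log\langle\xi\rangle$ on the region at infinity in $T^*M$, with $c>0$ independent of $\varepsilon$.

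Next I would use $-h^{-1}Q$ to handle bounded frequencies. On the microsupport of $Q$ the operator is self-adjoint and elliptic, so $-h^{-1}Q$ contributes a large negative imaginary part. Combining the two regions via a microlocal partition of unity and noting the role of $\im s>-s_0$, one gets, for $r>r(s_0)+|k|$ and $h$ sufficiently small, an effective negative imaginary part of order $-s_0-c_1 r\log\langle\xi\rangle$. A sharp G\r{a}rding inequality in the log-weight calculus then yields the uniform estimate
\begin{equation*}
    \bigl\lVert (P_\varepsilon-h^{-1}Q-s)u\bigr\rVert_{\mathcal{H}_{h,rG+k\log\langle\xi\rangle}}\;\ge\;C\,\lVert u\rVert_{\mathcal{H}_{h,rG+k\log\langle\xi\rangle}},
\end{equation*}
with $C>0$ independent of $\varepsilon$. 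Since the adjoint $P_\varepsilon^{*}$ satisfies the analogous estimate (with the roles of $E_u^*$ and $E_s^*$ swapped and $m$ replaced by $-m$, which is automatic from the choice of $G$), a standard Fredholm/duality argument gives surjectivity, hence invertibility with $O(1)$ bound.

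The main obstacle is the uniformity in $\varepsilon$: one must verify that the escape inequality $X_{\varepsilon,*}m\le 0$, the exceptional compact set in $T^*M$ where it fails, and the constants in the G\r{a}rding step can all be controlled uniformly for $|\varepsilon|\le\varepsilon_0$. This is precisely what is engineered in \cite{YGB19} by constructing $m$ globally from the cone fields of $X_0$ and exploiting the openness of the Anosov property in $C^1$; once this uniform $m$ is in hand, the remainder of the Faure--Sj\"ostrand argument goes through verbatim. A secondary technical point is to keep $Q$ independent of $\varepsilon$, which is harmless because $Q$ is only required to be elliptic on a fixed compact neighbourhood of the zero section.
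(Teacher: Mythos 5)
The paper does not actually prove this lemma: it is imported verbatim as \cite[Lemma 9]{YGB19}, so there is no in-paper argument to compare against. Your proposal is a reconstruction of the proof strategy of that reference, and it correctly identifies both the standard Faure--Sj\"ostrand machinery (conjugation by $\Op_h(e^{rG})$ and $\Lambda_{h,k}$ to reduce to $L^2$, escape-function estimate at infinity, the complex absorbing potential $h^{-1}Q$ near the zero section, sharp G\r{a}rding, and a Fredholm/duality step for surjectivity) and the one genuinely new ingredient, namely that the weight $m$ is built once from the cone fields of $X_0$ so that $X_{\varepsilon,*}m\le 0$ holds uniformly on a $C^1$-neighbourhood. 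That is consistent with what \cite{YGB19} does.

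One substantive gap in your sketch: you never address the conic neighbourhood of $E_0^*$ at high frequency, where $X_{\varepsilon,*}m$ is merely $\le 0$ (indeed $m$ transitions through $0$ there) and the damping term $-r\,X_{\varepsilon,*}G$ contributes nothing, while $Q$ is also microsupported away from that region. There the estimate must come from ellipticity of the real part $\sigma(P_\varepsilon)-\re s=\xi(X_\varepsilon)-\re s$, which is nonvanishing on $E_0^*\setminus 0$; this is precisely where the hypothesis $|\re s|<h^{-\frac{1}{2}}$ (which your argument quotes but never uses) and the semiclassical rescaling enter, since one needs $\re s$ to be dominated by $h^{-1}|\xi(X_\varepsilon)|$ on that region. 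Without this piece the microlocal partition of unity you invoke does not close up. A second, more minor point is that the threshold $r>r(s_0)+|k|$ should be tied to absorbing the extra $\langle\xi\rangle^{\pm k}$ conjugation losses into the escape-function gain, which your sketch asserts but does not track. Since the paper itself delegates all of this to \cite{YGB19}, these are gaps relative to a complete proof rather than divergences from the paper.
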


Here $r(s)$ is a non-increasing function of $\im s$, so that $r(s) > r_{P_\varepsilon}(\im s)$ for all $\varepsilon\in (-\varepsilon_0, \varepsilon_0)$. Also, here $r_{P_\varepsilon}(s_0)$ represents a certain threshold (see \cite[p. 4.]{YGB19}) depending on $P_\varepsilon$ such that for $r$ bigger than this quantity the resolvent $(P_\varepsilon -h^{-1}Q - s)^{-1}: \mathcal{H}_{h, rG} \to \mathcal{H}_{h, rG}$ is holomorphic and $(P_\varepsilon - s)^{-1}: \mathcal{H}_{rG} \to \mathcal{H}_{rG}$ admits a meromorphic extension, to $\im s > - s_0$ and $|\re s| \leq h^{-\frac{1}{2}}$.

\subsection{Pollicott-Ruelle multiplicities are locally constant} In this section we prove, using the construction of anisotropic Sobolev spaces in the previous section, that in some fixed bounded region, the sums of multiplicities of resonances are locally constant. Observe that under the assumptions in Lemma \ref{lemma:anisotropic}, we have the following factorisation property
\begin{equation}\label{eq:factorisation}
	(P_\varepsilon - s)(P_\varepsilon - h^{-1}Q - s)^{-1} = \id + h^{-1}Q(P_\varepsilon - h^{-1}Q - s)^{-1}.
\end{equation}
This holds for $s$ in $\Omega_{h, s_0} : = \{s : \im s > -s_0, \,\, |\re s| < h^{-\frac{1}{2}}\}$. Introduce the notation 
\[D(\varepsilon, s) := h^{-1}Q(P_\varepsilon - h^{-1}Q - s)^{-1}.\] 
Since $Q$ is smoothing, we have that $D(\varepsilon, s)$ is of trace class, and moreover since for any $\varepsilon, \varepsilon'$
\[D(\varepsilon, s) - D(\varepsilon', s) = h^{-1}Q(P_{\varepsilon'} - h^{-1}Q - s)^{-1} (P_{\varepsilon'} - P_{\varepsilon}) (P_{\varepsilon} - h^{-1}Q - s)^{-1},\]
we have that $\varepsilon \mapsto D(\varepsilon, s)$ is continuous with values in holomorphic maps from $\Omega_{h, s_0 + 1}$ to $\Lapl(\mathcal{H}_{rG}, \mathcal{H}_{rG})$. Here $\mathcal{L}(A, B)$ denotes the space of bounded operators from $A$ to $B$, with the operator norm.

Then $P_\varepsilon - s: \mathcal{D}_{rG}^\varepsilon \to \mathcal{H}_{rG}$ are an analytic family of Fredholm operators for $\im s > -s_0$. Consider now a resonance $s_1$ of $P = P_0$, and a simple closed curve $\gamma$ around $s_1$ containing no resonances on itself or in its interior except $s_1$, such that $\gamma \subset \Omega_{h, s_0}$. The fact that $D(\varepsilon, s)$ is continuous, allows to say that for $\varepsilon$ small, a neighbourhood of $\gamma$ still contains no resonances of $P_\varepsilon$. Introduce the family of projectors
\[\Pi_\varepsilon := \frac{1}{2\pi i} \oint_{\gamma} (s - P_\varepsilon)^{-1} ds.\]

Our first aim is to prove

\begin{lemma}\label{lemma:constrank}
	The ranks of $\Pi_\varepsilon$ are locally constant, i.e. there is an $\varepsilon_1 > 0$ s.t. $\rk \Pi_\varepsilon$ is constant for $\varepsilon \in (-\varepsilon_1, \varepsilon_1)$.
\end{lemma}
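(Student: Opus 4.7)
\medskip

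\noindent\textbf{Proof plan.} The goal is to show that the spectral projectors $\Pi_\varepsilon$ depend norm-continuously on $\varepsilon$ as operators on $\mathcal{H}_{rG}$, and then to invoke the classical fact that a norm-continuous family of bounded projections has locally constant rank (whenever one of them is of finite rank). Since $\Pi_0$ is known to be finite-rank (being the spectral projector for a Pollicott-Ruelle resonance of $P_0$), this will give the claim.

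The main step is to establish norm-continuity of $\varepsilon\mapsto(s-P_\varepsilon)^{-1}$ on $\mathcal{H}_{rG}\to\mathcal{H}_{rG}$, uniformly in $s\in\gamma$. By the factorisation \eqref{eq:factorisation}, for $s\in\gamma$ one has
\[
(s-P_\varepsilon)^{-1}=-(P_\varepsilon-h^{-1}Q-s)^{-1}\bigl(\id+D(\varepsilon,s)\bigr)^{-1},
\]
so it suffices to check that each factor is continuous in $\varepsilon$ in operator norm. For the second factor, $\id+D(0,s)$ is invertible along $\gamma$ by the choice of $\gamma$; since $\varepsilon\mapsto D(\varepsilon,s)$ is continuous with values in holomorphic maps $\Omega_{h,s_0+1}\to\mathcal{L}(\mathcal{H}_{rG},\mathcal{H}_{rG})$ (as stated in the excerpt) and $\gamma$ is compact, the invertibility persists and the inverse varies continuously for $|\varepsilon|$ small. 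For the first factor, I would use the resolvent identity
\[
(P_\varepsilon-h^{-1}Q-s)^{-1}-(P_0-h^{-1}Q-s)^{-1}=-(P_\varepsilon-h^{-1}Q-s)^{-1}(P_\varepsilon-P_0)(P_0-h^{-1}Q-s)^{-1},
\]
combined with the uniform bound $O(1)$ of the resolvents from Lemma \ref{lemma:anisotropic} and the fact that $P_\varepsilon-P_0$ is a first-order differential operator whose coefficients tend to zero as $\varepsilon\to 0$, so that $(P_\varepsilon-P_0)(P_0-h^{-1}Q-s)^{-1}:\mathcal{H}_{rG}\to\mathcal{H}_{rG}$ has norm $\to 0$. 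Integrating over the compact contour $\gamma$ then yields norm-continuity of $\Pi_\varepsilon$ on $\mathcal{H}_{rG}$.

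With norm-continuity in hand, I would conclude by the standard perturbation lemma for projections: if $\Pi,\Pi'$ are bounded idempotents on a Banach space with $\|\Pi-\Pi'\|<1$, then the operator $U:=\Pi\Pi'+(\id-\Pi)(\id-\Pi')$ is invertible (being close to the identity) and satisfies $U\Pi'=\Pi U$, so $\ran\Pi\cong\ran\Pi'$ as vector spaces. Applied to $\Pi_0$ and $\Pi_\varepsilon$ for $|\varepsilon|<\varepsilon_1$ sufficiently small, this forces $\rk\Pi_\varepsilon=\rk\Pi_0$, which is finite since $\Pi_0$ is of finite rank.

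The main obstacle is the estimate in the second step: because the natural domain $\mathcal{D}_{rG}^\varepsilon$ of $P_\varepsilon$ varies with $\varepsilon$, one must be careful when asserting that $(P_\varepsilon-P_0)(P_0-h^{-1}Q-s)^{-1}$ is a bounded operator $\mathcal{H}_{rG}\to\mathcal{H}_{rG}$ with small norm. This is handled by noting that $(P_0-h^{-1}Q-s)^{-1}$ maps $\mathcal{H}_{rG}$ into $\mathcal{D}_{rG}^0\subset\mathcal{D}_{rG,-1}^0$ (i.e.\ anisotropic Sobolev loss of one order), on which the first-order differential operator $P_\varepsilon-P_0$ acts boundedly into $\mathcal{H}_{rG}$ with norm controlled by $\|X_\varepsilon-X_0\|_{C^0}=O(\varepsilon)$; this is exactly the type of mapping property built into the construction of the uniform weight in \cite{YGB19}. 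Once this bound is secured, everything else is routine.
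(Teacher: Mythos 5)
There is a genuine gap at exactly the step you flag as ``the main obstacle'', and it is not a technicality that the construction of \cite{YGB19} handles for you. Your plan is to prove norm-continuity of $\varepsilon\mapsto\Pi_\varepsilon$ in $\mathcal{L}(\mathcal{H}_{rG},\mathcal{H}_{rG})$ and then invoke the classical perturbation lemma for projections; but the norm-continuity is the whole difficulty, and your argument for it fails. The operator $(P_0-h^{-1}Q-s)^{-1}$ maps $\mathcal{H}_{rG}$ into the domain $\mathcal{D}_{rG}^0=\{u\in\mathcal{H}_{rG}:P_0u\in\mathcal{H}_{rG}\}$, which controls only the derivative of $u$ \emph{along} $X_0$: since $P_0-h^{-1}Q-s$ is not elliptic, its inverse gains no full anisotropic order and $\mathcal{D}_{rG}^0$ is not contained in $\mathcal{H}_{rG,1}$. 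The perturbation $P_\varepsilon-P_0$ is a first-order operator in directions transverse to $X_0$, so $(P_\varepsilon-P_0)(P_0-h^{-1}Q-s)^{-1}$ is a priori bounded only as a map $\mathcal{H}_{rG}\to\mathcal{H}_{rG,-1}$ (equivalently $\mathcal{H}_{rG,1}\to\mathcal{H}_{rG}$), not on $\mathcal{H}_{rG}$ with small norm. Hence the resolvents, and therefore the projectors, are continuous in $\varepsilon$ only up to a loss of one derivative --- precisely what the paper records immediately after the lemma (``a priori projections $\Pi_\varepsilon$ are continuous only \ldots with values in $\mathcal{L}(\mathcal{H}_{rG,1},\mathcal{H}_{rG})$ and $\mathcal{L}(\mathcal{H}_{rG},\mathcal{H}_{rG,-1})$''). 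Two projections that are close only as maps between \emph{different} spaces do not satisfy the hypotheses of the perturbation lemma you quote, so the conclusion $\rk\Pi_\varepsilon=\rk\Pi_0$ does not follow. Your treatment of the factor $(\id+D(\varepsilon,s))^{-1}$ is fine; the problem sits entirely in the first factor $(P_\varepsilon-h^{-1}Q-s)^{-1}$.

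The paper's proof is designed to sidestep exactly this: it never estimates the resolvent difference, but instead computes $\rk\Pi_\varepsilon$ via the generalised argument principle as the contour integral of the logarithmic derivative of the Fredholm determinant $\det\big(\id+D(\varepsilon,s)\big)$. That quantity \emph{is} continuous in $\varepsilon$ because in the difference $D(\varepsilon,s)-D(\varepsilon',s)=h^{-1}Q(P_{\varepsilon'}-h^{-1}Q-s)^{-1}(P_{\varepsilon'}-P_\varepsilon)(P_\varepsilon-h^{-1}Q-s)^{-1}$ the smoothing operator $Q$ sits on the left and absorbs the one-derivative loss; an integer-valued continuous function is locally constant. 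Note also that the paper establishes genuine continuity of $\Pi_\varepsilon$ on $\mathcal{H}_{rG}$ only \emph{after} Lemma \ref{lemma:constrank}, using the constancy of the rank and the finite-rank decomposition of \cite{ChDa19}, so deriving the lemma from that continuity would invert the logical order. To repair your proof you should either work with $\id+D(\varepsilon,s)$ and its determinant (or its Gohberg--Sigal null multiplicities) as the paper does, or supply a genuinely new argument comparing ranks of projections that are close only as maps $\mathcal{H}_{rG,1}\to\mathcal{H}_{rG}$.
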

\begin{proof}
	We first claim that, for $\varepsilon$ small enough
	\begin{equation}\label{eq:argumentprinciple}
		\frac{1}{2\pi i} \Tr \oint_\gamma \partial_s \big(\id + D(\varepsilon, s)\big)^{-1} \big(\id + D(\varepsilon, s)\big) ds =  -\rk \Pi_\varepsilon.
	\end{equation}
	The left hand side is well defined by the generalised argument principle \cite[Theorem C.11.]{DyZwbook}, since the contour integral is a finite rank operator. To prove the equality in \eqref{eq:argumentprinciple}, we apply the residue theorem for meromorphic families of operators. Use \eqref{eq:factorisation} to obtain the left hand side of \eqref{eq:argumentprinciple} is equal to
	\begin{align*}
		&\frac{1}{2\pi i} \Tr \oint_\gamma \big((s - P_\varepsilon)^{-1} + (P_\varepsilon - h^{-1}Q - s)(P_\varepsilon - s)^{-2}\big)(P_\varepsilon - s)(P_\varepsilon - h^{-1}Q - s)^{-1} ds\\
		&= -\frac{1}{2\pi i} \Tr \oint_\gamma (P_\varepsilon - h^{-1}Q - s)^{-1}ds + \frac{1}{2\pi i} \Tr \oint_\gamma (P_\varepsilon - h^{-1}Q - s)(P_\varepsilon - s)^{-1}(P_\varepsilon - h^{-1}Q - s)^{-1} ds.
	\end{align*}
	The first integrand in the second line above vanishes, since $(P_\varepsilon - h^{-1}Q - s)^{-1}$ is holomorphic; the second one is equal to $-\Tr \Pi_\varepsilon = -\rk \Pi_\varepsilon$, by the cyclicity of traces. This shows \eqref{eq:argumentprinciple}.
	
	Now recall by Jacobi's formula that we have
	\begin{multline*}
	\frac{1}{2\pi i} \Tr \oint_\gamma \partial_s \big(\id + D(\varepsilon, s)\big)^{-1} \big(\id + D(\varepsilon, s)\big) ds = -\frac{1}{2\pi i} \oint_\gamma \Tr \Big((\id + D(\varepsilon, s))^{-1} \partial_s D(\varepsilon, s)\Big) ds\\
	= -\frac{1}{2\pi i} \oint_\gamma \frac{\partial_s \det\big(\id + D(\varepsilon, s)\big)}{\det\big(\id + D(\varepsilon, s)\big)} ds.
	\end{multline*}
	Here we used integration by parts, and that $\partial_s D(\varepsilon, s)$ is a smoothing operator to commute trace and integration. In particular, continuity of $\varepsilon \mapsto D(\varepsilon, s)$ as above and so of the Fredholm determinant $\varepsilon \mapsto \det\big(\id + D(\varepsilon, s)\big)$ and its derivative $\varepsilon \mapsto \partial_s\det\big(\id + D(\varepsilon, s)\big)$, implies that for $\varepsilon$ small enough the integrand changes by a small margin, and since the integral is integer valued, we obtain the claim.\footnote{Alternatively, one may apply the generalised Rouch\'e's theorem \cite[Theorem C.12.]{DyZwbook} to conclude that the sums of \emph{null multlicities} (in the sense of Gohberg-Sigal theory, cf. \cite[Appendix C]{DyZwbook}) over the resonances in the interior of $\gamma$ of operators $\id + D(\varepsilon, s)$ for small enough $\varepsilon$ are constant. By \eqref{eq:argumentprinciple}, we know that these sums of null multiplicities are equal to $\rk \Pi_\varepsilon$, which proves the claim.}
\end{proof}

Note that apriori projections $\Pi_\varepsilon$ are continuous only as functions of $\varepsilon$ with values in $\mathcal{L}(\mathcal{H}_{rG, 1}, \mathcal{H}_{rG})$ and $\mathcal{L}(\mathcal{H}_{rG}, \mathcal{H}_{rG, -1})$, if the resolvents $(P_\varepsilon - s)^{-1}$ are. The maps $\Pi_\varepsilon: \ran \Pi_0 \to \ran \Pi_\varepsilon$ are isomorphisms for small $\varepsilon$ by Lemma \ref{lemma:constrank}. We will show $\varepsilon \mapsto \Pi_{\varepsilon} \in \mathcal{L}(\mathcal{H}_{rG}, \mathcal{H}_{rG})$ is continuous; we follow the argument in \cite[Appendix A]{ChDa19}. Pick a basis $\varphi^j \in \mathcal{H}_{rG, 1}$, $j = 1, \dotso, k = \rk \Pi_0$ of $\ran \Pi_0$, and define $\varphi_\varepsilon^j := \Pi_\varepsilon \varphi^j$; $\varepsilon \mapsto \varphi_{\varepsilon}^j \in \mathcal{H}_{rG}$ is continuous. Define also $\widetilde{\varphi}_\varepsilon^j = \Pi_0 \Pi_\varepsilon \varphi^j$ and note $\varepsilon \mapsto \widetilde{\varphi}_\varepsilon^j \in \mathcal{H}_{rG}$ is also continuous. Let $\nu_\varepsilon^j$ be the dual basis in $\ran \Pi_0$ of $\widetilde{\varphi}_\varepsilon^j$; then $\varepsilon \mapsto \nu_\varepsilon^j \in (\ran \Pi_0)'$ is continuous. Here the prime denotes the dual.  Finally, let $l_\varepsilon^j : = \nu_\varepsilon^j \circ \Pi_0 \circ \Pi_\varepsilon$, continuous as a map $\varepsilon \mapsto l_\varepsilon^j \in \mathcal{H}_{rG}'$. Then we may write
\[\Pi_\varepsilon = \sum_{j = 1}^k \varphi_\varepsilon^j \otimes l_\varepsilon^j.\]
By construction, this map is continuous $\mathcal{H}_{rG} \to \mathcal{H}_{rG}$, for $r > r(s_0) + 1$.

One may further bootstrap this argument as in \cite{ChDa19}, to re-obtain \cite[Lemma 10]{YGB19}:
\begin{lemma}\label{lemma:Pismooth}
	For $r > r(s_0) + k + 1$ and $\varepsilon$ small enough, $\varepsilon \mapsto \Pi_\varepsilon$ is a $C^k$ family of bounded operators on $\mathcal{H}_{rG}$.
\end{lemma}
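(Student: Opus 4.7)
The plan is to bootstrap the $C^0$ argument carried out just before the lemma by differentiating the resolvent identity and tracking the loss of anisotropic regularity. The starting observation is the standard resolvent identity
\begin{equation*}
(P_\varepsilon - s)^{-1} - (P_{\varepsilon'} - s)^{-1} = (P_{\varepsilon} - s)^{-1}(P_{\varepsilon'} - P_{\varepsilon})(P_{\varepsilon'} - s)^{-1},
\end{equation*}
valid outside resonances by Lemma \ref{lemma:anisotropic}. Since the family $\varepsilon \mapsto P_\varepsilon$ is smooth in $\varepsilon$ with values in first-order differential operators, the difference $P_{\varepsilon'} - P_{\varepsilon}$ is bounded $\mathcal{H}_{rG} \to \mathcal{H}_{rG, -1}$ with norm $O(|\varepsilon' - \varepsilon|)$. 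Combined with the uniform $O(1)$ bound on $(P_\varepsilon - s)^{-1}$ between any pair $\mathcal{H}_{rG, j} \to \mathcal{H}_{rG, j}$ (provided $r > r(s_0)+|j|$), one concludes that $\varepsilon \mapsto (P_\varepsilon - s)^{-1}$ is differentiable as a map into $\mathcal{L}(\mathcal{H}_{rG}, \mathcal{H}_{rG,-1})$, with derivative $-(P_\varepsilon - s)^{-1}(\partial_\varepsilon P_\varepsilon)(P_\varepsilon - s)^{-1}$.

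Iterating via Leibniz, the $k$-th derivative $\partial_\varepsilon^k (P_\varepsilon - s)^{-1}$ is a sum of terms of the form
\begin{equation*}
\pm (P_\varepsilon - s)^{-1} \, Q_{j_1} \, (P_\varepsilon - s)^{-1} \, Q_{j_2} \, \cdots \, Q_{j_k} \, (P_\varepsilon - s)^{-1},
\end{equation*}
where each $Q_{j_i} = \partial_\varepsilon^{j_i} P_\varepsilon$ is a first-order differential operator depending smoothly on $\varepsilon$. Each such term is bounded $\mathcal{H}_{rG, m} \to \mathcal{H}_{rG, m-k}$ by Lemma \ref{lemma:anisotropic}, as long as $r > r(s_0) + \max(|m|, |m-k|)$; holomorphy in $s$ is preserved. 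Integrating around the fixed contour $\gamma$ (with no resonances of $P_\varepsilon$ in its interior or on itself, for $\varepsilon$ small, by Lemma \ref{lemma:constrank}) yields $\partial_\varepsilon^k \Pi_\varepsilon$ bounded $\mathcal{H}_{rG, m} \to \mathcal{H}_{rG, m-k}$.

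To upgrade this to boundedness $\mathcal{H}_{rG} \to \mathcal{H}_{rG}$, I would redo the splitting $\Pi_\varepsilon = \sum_{j=1}^{\rk \Pi_0} \varphi_\varepsilon^j \otimes l_\varepsilon^j$ from the paragraph preceding the lemma, but starting with basis vectors $\varphi^j \in \mathcal{H}_{rG, k+1}$ of $\ran \Pi_0$ (this is possible because $\ran \Pi_0$ is finite dimensional and consists of resonant states, which in fact lie in all $\mathcal{H}_{rG, m}$ for $r > r(s_0) + |m|$). Then $\varphi_\varepsilon^j := \Pi_\varepsilon \varphi^j \in \mathcal{H}_{rG, 1}$ by the previous paragraph applied with $m = k+1$, and $\varepsilon \mapsto \varphi_\varepsilon^j$ is $C^k$ with values in $\mathcal{H}_{rG}$, since the $k$-fold loss in regularity is absorbed by the extra $k+1$ smoothness of the seed $\varphi^j$. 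The same argument applied to the adjoint family $-P_\varepsilon^*$ on the dual anisotropic scale produces the $C^k$ regularity of $l_\varepsilon^j$ in $\mathcal{H}_{rG}'$, and Leibniz finishes the proof.

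The main obstacle, and also the reason for the threshold $r > r(s_0) + k + 1$, is the book-keeping of anisotropic regularity: every $\varepsilon$-derivative consumes one unit of smoothness in the scale $\mathcal{H}_{rG, \bullet}$, and Lemma \ref{lemma:anisotropic} only provides resolvent bounds on layers whose index is controlled by $r - r(s_0)$. One needs $k$ units to accommodate the $k$ derivatives of the resolvent, plus one additional unit so that the seed vectors $\varphi^j$ can be taken slightly smoother than $\mathcal{H}_{rG}$, converting the $\mathcal{H}_{rG, -k}$-valued derivatives into $\mathcal{H}_{rG}$-valued ones after application to $\varphi^j$.
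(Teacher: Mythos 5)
Your proposal is correct and follows essentially the same route as the paper, which proves continuity via the resolvent identity and the rank-one decomposition $\Pi_\varepsilon = \sum_j \varphi_\varepsilon^j \otimes l_\varepsilon^j$ and then simply asserts that one ``bootstraps'' this argument as in \cite{ChDa19} to recover \cite[Lemma 10]{YGB19}. You have carried out that bootstrap explicitly --- differentiating the resolvent identity, tracking the loss of one unit in the scale $\mathcal{H}_{rG,\bullet}$ per $\varepsilon$-derivative, and absorbing the $k$-fold loss by taking seed vectors in $\mathcal{H}_{rG,k+1}$ --- which is precisely what the cited references do and what the threshold $r > r(s_0)+k+1$ is designed for.
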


We are now in good shape to prove some of the basic perturbation statements from the introduction.

\begin{proof}[Proof of Theorem \ref{thm:localresonance}, (1) \& (2)]
		If $X_0 \in \mathcal{X}_\Omega^0$ has non-zero helicity, then for $\varepsilon$ small enough, 
$\mathcal{H}(X_{\varepsilon}) \neq 0$ and we may assume by Lemma \ref{lemma:constrank} that $m_{1, X_{\varepsilon}}(0) \leq m_{1, X_0}(0) = b_1(M)$. Thus by Theorem \ref{thm:volpres}, $\dim \res_{-i\Lapl_{X_{\varepsilon}}, \Omega_0^1}(0) = b_1(M) = m_{1, X_{\varepsilon}}(0)$, so that $X_{\varepsilon}$ is $1$-semisimple, which proves $(1)$.
		
		The proof of $(2)$ is completely analogous to the proof above and we omit it.
\end{proof}

\section{Proof of Theorem \ref{thm:perturbationflow}}\label{section:closetocontact} In this section we discuss what happens with semisimplicity if we perturb an arbitrary contact Anosov flow. For this purpose, consider $M$ a closed orientable $3$-manifold, and a contact Anosov flow $X$ on $M$. This implies there is a contact $1$-form $\alpha$, such that $\Omega = -\alpha \wedge d\alpha$ is a volume form, $\alpha(X) = 1$ and $\iota_X d\alpha = 0$.

We consider a frame $\{X_{1},X_{2}\}$ of $ \ker \alpha$ (such a frame exists since $M$ is parallelizable) such
that $d\alpha(X_{1},X_{2})=-1$. The dual co-frame $\{\alpha,\alpha_{1},\alpha_{2}\}$ to $\{X,X_{1},X_{2}\}$  satisfies
\[d\alpha = \alpha_2 \wedge \alpha_1, \quad \Omega = - \alpha \wedge d\alpha = \alpha \wedge \alpha_1 \wedge \alpha_2.\]
Next, consider a Riemannian metric $g$ on $M$, making $\{X,X_{1},X_{2}\}$ an orthonormal frame. Observe that
$\Omega^1 = \mathbb{R}\alpha \oplus \Omega_0^1$ and for any $u =  u_1 \alpha_1 + u_2 \alpha_2 \in \mathcal{D}'(M; \Omega_0^1)$, we have for the action of the Hodge star $\ast$ of $g$
\begin{equation}\label{eq:hodge'}
\ast u = u_1 \alpha_2 \wedge \alpha + u_2 \alpha \wedge \alpha_1 = \alpha \wedge (u_2 \alpha_1 - u_1 \alpha_2).
\end{equation}
We introduce the complex structure $J: \Omega_0^1 \to \Omega_0^1$ given by
\[J u := u_2 \alpha_1 - u_1 \alpha_2,\]
so that $\ast u = \alpha \wedge J u$. In particular, we have $\Lapl_X^*u = -\ast \Lapl_X \ast u = 0$ if and only if
\begin{equation}\label{eq:liecoresonant}
\Lapl_X J u = 0.
\end{equation}

Let $Y\in \mathcal{X}_{\Omega}$. Since $Y$ preserves $\Omega$ we may consider the winding cycle map associated to $Y$:
\[W_Y: H^1(M) \to \mathbb{C}, \quad W_Y(\theta) := \int_{M} \theta(Y) \Omega.\]
Clearly $Y$ is null-homologous iff $W_{Y}\equiv 0$. The next lemma characterizes the property of $Y$ being null-homologous in terms of a distinguished resonant state of $X$. Let $\Pi$ denote the spectral projector at zero of $-i\Lapl_{X}$ acting on $\Omega^{1}$ (cf. \eqref{eq:projector}). 
Set
\[u:=\Pi \Lapl_{Y}\alpha\in \text{Res}_{-i\Lapl_{X},\Omega^{1}}(0).\]


\begin{lemma} We have $\iota_X u = 0$. Let $\theta$ be a (real) smooth closed 1-form and let $\psi \in \mathcal{D}'_{E_s^*}(M)$ be such that
$v:= (J)^{-1}(\theta + d\psi) \in \res_{-i\Lapl_X^*, \Omega_0^1}(0)$. Then
\[\langle u,v\rangle_{L^2}=-W_{Y}(\theta).\]
In particular, $Y$ is null-homologous iff $u=0$.
\label{lemma:uandY}

\end{lemma}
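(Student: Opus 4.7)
The plan is to prove the three claims ($\iota_X u = 0$; the pairing formula; the null-homologous dichotomy) separately. For $\iota_X u = 0$: since $\iota_X$ intertwines $\Lapl_X$ (because $\Lapl_X = d\iota_X + \iota_X d$ and $\iota_X^2 = 0$), it commutes with the resolvents and hence with the spectral projectors, giving $\iota_X \Pi_1 = \Pi_0 \iota_X$ on the level of projectors on $\Omega^k$. The contact condition $\iota_X d\alpha = 0$ reduces $\iota_X \Lapl_Y \alpha = \iota_X d(\alpha(Y)) = X(\alpha(Y)) = \Lapl_X(\alpha(Y))$, and $0$-semisimplicity (Lemma \ref{lemma:k=02noncontact}) yields $\Lapl_X \Pi_0 = 0$, so $\iota_X u = \Pi_0 \Lapl_X(\alpha(Y)) = 0$.

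For the pairing formula, by Lemma \ref{lemma:adjoints} (co-resonance gives $\Pi^* v = v$), one rewrites $\langle u, v\rangle = \langle \Lapl_Y \alpha, v\rangle$; then using the Hodge identity \eqref{eq:hodge'} and the reality of $\theta$, $\star \bar v = \alpha \wedge J\bar v = \alpha \wedge (\theta + d\bar\psi)$, so
\begin{equation*}
\langle u, v\rangle = \int_M \Lapl_Y \alpha \wedge \alpha \wedge (\theta + d\bar\psi).
\end{equation*}
Expanding $\Lapl_Y \alpha = \iota_Y d\alpha + d(\alpha(Y))$, the exact part contributes $-\int_M \alpha(Y) d\alpha \wedge (\theta + d\bar\psi)$ by Stokes. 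For the $\iota_Y d\alpha$ part, I will contract $\iota_Y$ against the identically vanishing $4$-form $d\alpha \wedge \alpha \wedge (\theta + d\bar\psi)$ and use $d\alpha \wedge \alpha = -\Omega$; combining the pieces produces
\begin{equation*}
\langle u, v\rangle = -2\int_M \alpha(Y)\, d\alpha \wedge (\theta + d\bar\psi) - W_Y(\theta) - \int_M Y\bar\psi\, \Omega.
\end{equation*}
The first integral vanishes because $d\alpha \wedge \eta = -\eta(X)\Omega$ for any $1$-form $\eta$, together with $\iota_X(\theta + d\bar\psi) = \iota_X(J\bar v) = 0$ (since $\bar v \in \Omega_0^1$ and $J$ preserves $\Omega_0^1$). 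The third vanishes because $Y$ preserves $\Omega$, so $Y^* = -Y$ in $L^2(\Omega)$ and $\langle Y\bar\psi, 1\rangle = 0$. This yields $\langle u, v\rangle = -W_Y(\theta)$.

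The final ``iff'' follows by combining this pairing formula with non-degeneracy of the resonant/co-resonant pairing. If $Y$ is null-homologous, writing $\iota_Y\Omega = d\tau$ and using Stokes gives $W_Y(\theta) = \int_M \theta \wedge d\tau = 0$ for every closed $\theta$, hence $\langle u, v\rangle = 0$ for every co-resonant $v$; $1$-semisimplicity of contact flows \cite[Lemma 3.5]{DyZw17} and Lemma \ref{lemma:pairingsemisimple} then force $u = 0$. Conversely, since $X$ is contact we have $[\omega] = 0$ and $\mathcal{H}(X) \neq 0$, so by Theorem \ref{thm:dimension} and the co-resonant analog of Lemma \ref{lemma:mapS}, the map $v \mapsto [\theta] \in H^1(M)$ is a bijection; hence $u = 0$ forces $W_Y \equiv 0$ on $H^1(M)$, i.e., $Y$ is null-homologous. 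The main technical care throughout will be verifying that the Stokes, $\iota_Y$-derivation, and wedge manipulations make distributional sense given $\psi \in \mathcal{D}'_{E_s^*}(M)$; this is routine from the wavefront-set conditions, but one has to check that every product and every pairing against a test object has the right wavefront compatibility.
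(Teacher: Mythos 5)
Your proof is correct and follows the same overall strategy as the paper (compute $\iota_X u$, convert $\langle u,v\rangle$ to $\langle\Lapl_Y\alpha,v\rangle$ via $\Pi^*v=v$, write the pairing as a wedge integral using the Hodge identity, then extract $-W_Y(\theta)$), but the central wedge computation is carried out by a different decomposition. Where the paper introduces the contact coframe $\{\alpha,\alpha_1,\alpha_2\}$, writes $Y=aX+a_1X_1+a_2X_2$, and computes $\Lapl_Y\alpha = a_1\iota_{X_1}d\alpha + a_2\iota_{X_2}d\alpha + da$ before wedging term by term, you instead use Cartan's formula $\Lapl_Y\alpha = \iota_Y d\alpha + d(\alpha(Y))$ and obtain the key identity for the $\iota_Y d\alpha$ piece by contracting $\iota_Y$ against the vanishing $4$-form $d\alpha\wedge\alpha\wedge(\theta+d\bar\psi)$. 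This is coordinate-free and slightly cleaner; both routes land on the same cancellations ($d\alpha\wedge(\theta+d\bar\psi)=-(\theta+d\bar\psi)(X)\,\Omega=0$ and $\int_M Y\bar\psi\,\Omega=0$). Two further remarks: (1) You are more careful with complex conjugation, writing $\ast\bar v = \alpha\wedge(\theta+d\bar\psi)$ where the paper writes $\ast v=\alpha\wedge(\theta+d\psi)$; this is the more accurate statement, but as you note the discrepancy is immaterial since the $d\psi$ contribution vanishes. (2) For the ``only if'' direction of the final claim you spell out that the co-resonant states realize every class in $H^1(M)$ — invoking Theorem \ref{thm:dimension} with $[\omega]=0$, $\mathcal H(X)\neq 0$, and the co-resonant analog of Lemma \ref{lemma:mapS} — which the paper takes as ``clear''; this is a worthwhile piece of explicitness, and your appeal to the surjectivity of $S$ on $\ker T$ when $[\omega]=0$ is exactly the right justification.
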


\begin{proof} We may write for some $a, a_1, a_2 \in C^\infty(M)$
\[Y = aX + a_1X_1 + a_2X_2\]
and a calculation shows
\begin{equation}\label{eq:lieY}
\Lapl_Y \alpha = (\iota_Y d + d \iota_Y) \alpha = a_1 \iota_{X_1} d\alpha + a_2 \iota_{X_2} d\alpha + da.
\end{equation}
Therefore, we have
\begin{equation}
	\iota_X u = \Pi \iota_X \Lapl_Y\alpha = \Pi Xa = X \Pi a = 0.
\end{equation}
In the previous equation we used that $\Pi a$ is constant by Theorem \ref{thm:volpres} and that $\Pi$ commutes with $X$. Next we compute using that $*v=\alpha\wedge(\theta+d\psi)$
\begin{align}\label{eq:thetai}
\begin{split}
	\langle{\Lapl_Y \alpha, v}\rangle_{L^2} &= \int_{M} (a_1 \iota_{X_1} d\alpha + a_2 \iota_{X_2} d\alpha + da) \wedge \alpha \wedge (\theta + d\psi )\\
	&= -\int_{M} (a_1 \iota_{X_1} + a_2 \iota_{X_2})(\theta + d\psi) \Omega\\
	&= -\int_{M}  \iota_Y(\theta + d\psi) \Omega = -\int_{M} \iota_Y \theta \Omega = -W_{Y}(\theta).
	\end{split}
\end{align}
Here we used the graded commutation rule for contractions, integration by parts and the following facts: $\theta + d\psi$ is closed, $\iota_X(\theta+ d\psi) = 0$ and $Y$ is volume preserving. By Lemma \ref{lemma:adjoints} it follows that $\Pi^* v = v$. By this and the computation in \eqref{eq:thetai}, it follows that
\[\langle{u, v}\rangle_{L^2} = \langle{\Pi\Lapl_Y \alpha, v}\rangle_{L^2} = \langle{\Lapl_Y \alpha, v}\rangle_{L^2} = -W_{Y}(\theta)\]
as desired.
Clearly, the relation $\langle u,v\rangle=-W_{Y}(\theta)$ implies that if $u=0$, then $Y$ is null-homologous.
If $Y$ is null-homologous, then $\langle u,v\rangle=0$ for all $v$. Since 1-semisimplicity holds for $X$, Lemma \ref{lemma:pairingsemisimple}, implies $u=0$ and the lemma is proved.

\end{proof}

The next lemma provides important information about the pairing between resonant and co-resonant states in the contact case.

\begin{lemma}\label{lemma:pairings}
	Let $\theta$ be a smooth closed one form on $M$. Let $\varphi \in \mathcal{D}'_{E_u^*}(M)$ and $\psi \in \mathcal{D}'_{E_s^*}(M)$ be such that \begin{align}\label{eq:rescond}
	\begin{split}
	u &= \theta + d\varphi \in \res_{-i\Lapl_X, \Omega_0^1}(0),\\
	v &= (J)^{-1}(\theta + d\psi) \in \res_{-i\Lapl_X^*, \Omega_0^1}(0).
	\end{split}
\end{align}
	Then
	\[\re \langle{u, v}\rangle_{L^2} = \re \int_{M} (\theta + d\varphi) \wedge \alpha \wedge (\overline{\theta} + d\overline{\psi}) \leq 0\]
	with equality if and only if $\theta$ is exact, or in other words $u = v = 0$.
\end{lemma}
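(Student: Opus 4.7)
The plan is to evaluate $I := \int_M(\theta+d\varphi)\wedge\alpha\wedge(\theta+d\bar\psi)$ algebraically via Stokes, reduce it to a scalar integral, and identify that integral with (minus one-half of) the asymptotic variance of the smooth mean-zero function $h:=\iota_X\theta$, whose non-negativity then delivers the inequality. I first expand $I$ by distributivity; the purely cubic $\theta$-term vanishes pointwise, and for the remaining three pieces I integrate by parts using $d\theta=0$. The two pointwise inputs in the co-frame $\{\alpha,\alpha_1,\alpha_2\}$ are $d\alpha\wedge\theta=-(\iota_X\theta)\Omega$ and the (crucial) \emph{smooth} identity $d\alpha\wedge d\varphi = d\alpha\wedge d\bar\psi = (\iota_X\theta)\Omega$: expanding $d\varphi=(X\varphi)\alpha+(X_1\varphi)\alpha_1+(X_2\varphi)\alpha_2$, the distributional pieces $X_j\varphi$ are annihilated by $d\alpha=-\alpha_1\wedge\alpha_2$, while $X\varphi=-\iota_X\theta$ is smooth because $\iota_X u=0$; the argument for $\bar\psi$ is identical via $\iota_X v=0$. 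Collecting, the $\varphi$-contributions cancel and one obtains $I=-\int_M\bar\psi\,(\iota_X\theta)\,\Omega$. Since $\iota_X\theta$ is real, $\im\varphi$ and $\im\psi$ are $X$-invariant scalar distributions with wavefront sets in $E_u^*$ and $E_s^*$ respectively, hence constants by the scalar case of Lemma \ref{lemma:k=02noncontact} (applied to $\pm X$); adjusting by imaginary constants leaves $u,v$ unchanged, so we may take $\varphi,\psi$ real and then $\re I = I = -\int_M \psi\,h\,\Omega$, with $\int_M h\,\Omega=0$ because $h\Omega=d(\theta\wedge\alpha)$.

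Next I realise $\psi$ dynamically: as the unique (up to constants) distributional solution of $X\psi=-h$ with $WF(\psi)\subset E_s^*$, it is given by the meromorphic extension of the scalar Pollicott-Ruelle resolvent of $-iX$ at $\lambda\to 0^+$ on appropriate anisotropic Sobolev spaces, equivalently by the ``future'' integral $\psi=\int_0^\infty h\circ\varphi_t\,dt$. Substituting and using $\varphi_t^*\Omega=\Omega$ together with the evenness $C(t)=C(-t)$ of the correlation $C(t):=\int_M h\cdot(h\circ\varphi_t)\,\Omega$,
\[\re I \;=\; -\int_0^\infty C(t)\,dt \;=\; -\tfrac12\,\sigma^2(h),\qquad \sigma^2(h):=\int_{-\infty}^\infty C(t)\,dt\;\geq\;0,\]
where non-negativity of $\sigma^2$ is Bochner's theorem for the unitary group $e^{-itX}$ on $L^2_0(M,\Omega)$. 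This step is the main technical obstacle: the future integral converges only in the anisotropic sense, so one must justify the limit $\lambda\to 0^+$ of $R_{-iX}(\lambda)h$ on a suitable anisotropic space and exchange it with the correlation integral via Fourier/spectral positivity.

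For the equality case, $\sigma^2(h)=0$ iff $h=Xg$ for some $g\in C^\infty(M)$ (Livsic criterion for Anosov flows); then $\theta-dg$ is a smooth closed $1$-form in $\Omega_0^1$, hence a smooth element of $\res_{-i\Lapl_X,\Omega_0^1}(0)$, which is zero by Lemma \ref{lemma:ressmoothtrivial}, so $\theta=dg$ is exact. Conversely, if $\theta$ is exact then $X(\varphi+g)=0$ with wavefront set in $E_u^*$ forces $\varphi+g$ constant, so $u=d(\varphi+g)=0$; symmetrically $v=0$.
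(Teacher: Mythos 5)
Your algebraic reduction is essentially the same integration-by-parts computation the paper performs (you land on $\re\langle u,v\rangle = -\re\int_M \overline{\psi}\,(\iota_X\theta)\,\Omega$, the paper on $\re\int_M\varphi\,(\iota_X\overline{\theta})\,\Omega$; these agree by one more IBP). But the sign argument and the equality case diverge sharply from the paper, and there the proposal has real gaps.

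A small slip first: the cubic term $\int_M\theta\wedge\alpha\wedge\overline{\theta}$ does \emph{not} vanish pointwise when $\theta$ is complex --- it is only purely imaginary --- and the identity $d\alpha\wedge d\overline{\psi}=(\iota_X\theta)\Omega$ should read $(\iota_X\overline{\theta})\Omega$. You patch this later by ``taking $\theta$ real'', but the lemma as stated allows complex $\theta$, and the reduction is asserted (``since $\iota_X\theta$ is real'') rather than derived; to be clean one should split $\theta = \theta_1+i\theta_2$ and sum two real inequalities, and you should note that $\im\varphi$, $\im\psi$ being constant uses $0$-semisimplicity (Lemma \ref{lemma:k=02noncontact}) and ergodicity.

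The central gap is the dynamical step. You replace the paper's positive-commutator argument (Lemma \ref{lemma2}, i.e.\ \cite[Lemma 2.3]{DyZw17}: $\im\langle -iX\varphi,\varphi\rangle \ge 0$ with $-iX\varphi$ smooth forces $\varphi$ smooth, whence $u=0$ by Lemma \ref{lemma:ressmoothtrivial}) by the identity $\re\langle u,v\rangle = -\int_0^\infty C(t)\,dt = -\tfrac12\sigma^2(h)$ and Bochner positivity. As you acknowledge, this requires (i) justifying $\psi = \lim_{\lambda\to 0^+}R_{-iX}(\lambda)h$ in an anisotropic space, (ii) exchanging the $M$-integral with the $t$-integral, and (iii) absolute convergence of $\int_0^\infty C(t)\,dt$. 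Item (iii) is not a routine technicality: for a general contact Anosov flow on a $3$-manifold this is exponential mixing (Liverani), a theorem considerably deeper than the microlocal Lemma \ref{lemma2} that the paper uses. Without it, the ``future integral'' for $\psi$ converges only in the anisotropic sense, and the step $-\int_M\psi h\,\Omega = -\int_0^\infty C(t)\,dt$ is not established. The equality case is also not self-contained: ``$\sigma^2(h)=0$ iff $h=Xg$ with $g$ smooth'' is not the Livsi\v c periodic-orbit theorem but a CLT-variance rigidity statement, which again needs the exponential-mixing framework plus the de la Llave--Marco--Mori\'yon regularity theory to upgrade a H\"older coboundary to a smooth one. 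The paper gets the equality case for free: once $\varphi$ is smooth, $u$ is a smooth invariant $1$-form in $\Omega_0^1$ and therefore vanishes (Lemma \ref{lemma:ressmoothtrivial}), giving $\theta = -d\varphi$ exact and then $v=0$ by $0$-semisimplicity. In short, the asymptotic-variance picture is a valid and conceptually appealing alternative route, but as written it trades an elementary lemma for two hard dynamical theorems and leaves the key convergence step open.
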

\begin{proof}
	By \eqref{eq:rescond} we have $\iota_X u = 0$ and $\iota_X v = 0$, so $X\varphi = X\psi = -\theta(X)$. We have the following chain of equalities
	\begin{align}\begin{split}\label{eq:computation}
		\re \langle{u, v}\rangle_{L^2} &= -\int_{M} \re(\theta \wedge \overline{\theta}) \wedge \alpha - \re \int_{M} \varphi d\alpha \wedge \overline{\theta}\\
		&= \re\int_{M} \varphi \overline{\theta}(X) \Omega = -\re \langle{\varphi, X\varphi}\rangle_{L^2} = \im \langle{-iX\varphi, \varphi}\rangle_{L^2}.
		\end{split}
	\end{align}
	Here we used $X\varphi = -\theta(X)$, $\re (\theta \wedge \overline{\theta}) = 0$ and integration by parts.
	
	Assume now $\re \langle{u, v}\rangle_{L^2} \geq 0$. By the computation in \eqref{eq:computation}, Lemma \ref{lemma2} implies $\varphi \in C^\infty(M)$, so $u \in C^\infty(M; \Omega_0^1)$ and Lemma \ref{lemma:ressmoothtrivial} implies $u \equiv 0$ and $\theta$ exact, so also $v \equiv 0$. This completes the proof.
\end{proof}

\subsection{Constructing the splitting resonance.} Let $Y\in {\mathcal X}_{\Omega}$ such that $Y$ is {\it not} null-homologous and consider a perturbation of $X$
\[X_{\varepsilon}=X+\varepsilon Y.\]

Consider a simple closed curve $\gamma$ around zero, so that no resonances of $-i\Lapl_{X_\varepsilon}$ on $\Omega^1(M)$ cross the curve $\gamma$, for small enough values of the parameter $\varepsilon$. Consider the family of projectors given by 
\begin{equation}\label{eq:projepsilon}
\Pi_\varepsilon := \Pi_{\Lapl_{X_\varepsilon}} = \frac{1}{2\pi i} \oint_\gamma (\lambda + i\Lapl_{X_\varepsilon})^{-1} d\lambda.
\end{equation}
By Lemma \ref{lemma:Pismooth}, the $\Pi_\varepsilon$ are $C^k$ in $\varepsilon$ in suitable topologies. More precisely, we have $\varepsilon \mapsto \Pi_\varepsilon \in \mathcal{L}(\mathcal{H}_{rG}, \mathcal{H}_{rG})$ is $C^k$ for $r > r(0) + k + 1$ (i.e. $r$ large enough).

We will construct the splitting resonant state ``by hand". For that purpose, consider
\[t_\varepsilon = \Lapl_{X_\varepsilon} \Pi_\varepsilon \alpha = \varepsilon \Pi_\varepsilon \Lapl_Y \alpha.\]
Here we used that $\Pi_\varepsilon$ commutes with $\iota_{X_\varepsilon}$ and $d$, which follows since the integral defining $\Pi_\varepsilon$ does so. 
Our candidate for the splitting resonance is
\[u_\varepsilon := \Pi_\varepsilon \Lapl_Y \alpha.\]
Firstly, we note that $\iota_{X_\varepsilon} u_\varepsilon = 0$, which follows from
\[\iota_{X_\varepsilon} t_\varepsilon = \Lapl_{X_\varepsilon} \Pi_\varepsilon (1 + \varepsilon \alpha(Y)) = 0.\]
This is because $\Pi_\varepsilon f = \frac{1}{\vol(M)} \int_{M} f \Omega$ is constant, which follows from Theorem \ref{thm:volpres}. We also understand that $\Pi_\varepsilon$ acts on forms of any degree, and is given by the expression \eqref{eq:projepsilon}. This implies directly that $\iota_{X_\varepsilon} u_\varepsilon = 0$ for $\varepsilon \neq 0$, and then by continuity we have $\iota_{X_\varepsilon} u_\varepsilon = 0$ for all $\varepsilon$.

Fix now $\varepsilon \neq 0$. Then either exactly one resonance ``splits" by Lemma \ref{lemma:constrank} and Theorem \ref{thm:volpres}, so we must have $\Lapl_{X_\varepsilon} t_\varepsilon = \mu_\varepsilon t_\varepsilon$ for some $\mu_\varepsilon \neq 0$ and thus $\Lapl_{X_\varepsilon} u_\varepsilon = \mu_\varepsilon u_\varepsilon$, or a resonant state does not split, in which case $\Lapl_{X_\varepsilon}t_\varepsilon = 0$ and so $\Lapl_{X_\varepsilon} u_\varepsilon = 0$. Also, we clearly have $\Lapl_{X} u_0 = 0$. 
Therefore, there exists a function $\lambda_\varepsilon$ such that for each small enough $\varepsilon$
\begin{equation}
	\Lapl_{X_\varepsilon} u_\varepsilon = \lambda_\varepsilon u_\varepsilon.
\end{equation}
Hence we may write
\[\lambda_\varepsilon = \frac{\langle{\Lapl_{X_\varepsilon} u_\varepsilon, u^*}\rangle}{\langle{u_\varepsilon, u^*}\rangle},\]
where $u^*$ is a co-resonant 1-form at zero, such that $\langle{u_0, u^*}\rangle \neq 0$. Such a one form exists by Lemma \ref{lemma:pairingsemisimple}. Therefore, for $\varepsilon$ small enough and by continuity the above expression makes sense, so we conclude that $\lambda_\varepsilon$ is in $C^2$ for $\varepsilon$ in an interval around zero. Note that $\lambda_0 = 0$ and that by Lemma \ref{lemma:uandY}, $u_0\neq 0$ since $Y$ is not null-homologous.

\subsection{Proving that $\lambda_\varepsilon \neq 0$.}
We dedicate this subsection to proving that $\lambda_\varepsilon \neq 0$ for $\varepsilon \neq 0$ and we achieve this by looking at the second order derivatives of $\lambda_\varepsilon$ in $\varepsilon$. Recall we have a $C^2$ family of resonant one forms $u_\varepsilon = \Pi_\varepsilon \Lapl_Y \alpha$, corresponding to resonances $-i\lambda_\varepsilon$ for the flow $X + \varepsilon Y$ such that
\begin{align}\label{eq:family}
\begin{split}
	\iota_{X + \varepsilon Y} du_\varepsilon &= \lambda_\varepsilon u_\varepsilon,\\
	\iota_{X + \varepsilon Y} u_\varepsilon &= 0.
	\end{split}
\end{align}
We will denote $u_0$ by $u$ and $\lambda_0$ by $\lambda$, and we apply the same principle to the derivatives of $\lambda$ and $u$ at zero. We want to linearise the equations \eqref{eq:family}, by taking derivatives in $\varepsilon$.

\textbf{First linearisation of \eqref{eq:family}.} We take the first derivative of \eqref{eq:family} to get
\begin{align}\label{eq:lin1}
\begin{split}
	\iota_Y du_\varepsilon + \iota_{X + \varepsilon Y} d\dot{u}_\varepsilon &= \dot{\lambda}_\varepsilon u_\varepsilon + \lambda_\varepsilon \dot{u}_\varepsilon,\\
	\iota_Y u_\varepsilon + \iota_{X + \varepsilon Y} \dot{u}_\varepsilon &= 0.
	\end{split}
\end{align}
Evaluating \eqref{eq:lin1} at $\varepsilon = 0$, we get the system
\begin{align}\label{eq:lin10}
\begin{split}
	\iota_Y du + \iota_{X} d\dot{u} &= \dot{\lambda} u,\\
	\iota_Y u + \iota_{X} \dot{u} &= 0.
	\end{split}
\end{align}
This further simplifies, since $u$ is a resonant state at zero, so by Lemma \ref{lemma:T} we have $du = 0$.  
By \eqref{eq:hodge'} we may write $\ast u^* = \alpha \wedge w$, where $w = Ju^*$ and we have $\Lapl_X w = 0$ and $\iota_X w = 0$. Similarly as before, since $w \in \mathcal{D}'_{E_s^*}(M; \Omega_0^1)$ we have $dw = 0$. Therefore, by taking inner product with $u^*$ in \eqref{eq:lin10}, we get
\begin{align*}
	\dot{\lambda} \langle{u, u^*}\rangle &= \langle{\iota_X d\dot{u}, u^*}\rangle = \int_{M} \iota_X d\dot{u} \wedge \alpha \wedge w\\
	&= -\int_{M} d \dot{u} \wedge w = -\int_{M} \dot{u} \wedge dw = 0.
\end{align*}
This implies $\dot{\lambda} = 0$.

\textbf{Second linearisation of \eqref{eq:family}.} By taking the $\varepsilon$ derivative of \eqref{eq:lin1} we get
\begin{align}\label{eq:lin2}
\begin{split}
	2\iota_Y d\dot{u}_\varepsilon + \iota_{X + \varepsilon Y} d \ddot{u}_\varepsilon &= \ddot{\lambda}_\varepsilon u_\varepsilon + 2\dot{\lambda}_\varepsilon \dot{u}_\varepsilon + \lambda_\varepsilon \ddot{u}_\varepsilon,\\
	2\iota_Y \dot{u}_\varepsilon + \iota_{X + \varepsilon Y} \ddot{u}_\varepsilon &= 0.
	\end{split}
\end{align}
We evaluate the equation \eqref{eq:lin2} at $\varepsilon = 0$ to get
\begin{align}\label{eq:lin20}
\begin{split}
	2\iota_Y d\dot{u} + \iota_{X} d \ddot{u} &= \ddot{\lambda} u,\\
	2\iota_Y \dot{u} + \iota_{X} \ddot{u} &= 0.
	\end{split}
\end{align}
Consider the same co-resonant state $u^*$ as above. 
Pairing \eqref{eq:lin20} with $u^*$ yields
\begin{align}\label{eq:secondderivative}
	\ddot{\lambda} \langle{u, u^*}\rangle = 2\int_{M} \iota_Y d\dot{u} \wedge \alpha \wedge w + \int_{M} \iota_X d \ddot{u} \wedge \alpha \wedge w.
\end{align}
Now the second integral above is equal to $-\int_{M} d\ddot{u} \wedge w = 0$, by integration by parts.

The first integral is a bit trickier and it is equal to
\begin{multline}\label{eq:secondvarcomp}
	\int_{M} \iota_Y d\dot{u} \wedge \alpha \wedge w =  \int_{M} (a_1 \iota_{X_1} + a_2 \iota_{X_2}) d\dot{u} \wedge \alpha \wedge w = \int_{M} (a_1 \iota_{X_1} + a_2 \iota_{X_2}) w  d \dot{u} \wedge \alpha\\
	= \int_{M} w(Y) d\dot{u} \wedge \alpha.
\end{multline}
Here we used that $\iota_X d\dot{u} = 0$ by the first linearisation analysis and $\iota_X w = 0$. Note that $\iota_X d\dot{u} = 0$ also implies that $d \dot{u} \wedge \alpha$ is $X$-invariant, so the integral $\int_{M} w(Y) d\dot{u} \wedge \alpha$ may be interpreted as ``some winding cycle".

Observe that $WF(d \dot{u}) \subset WF(\dot{u}) \subset E_u^*$. This follows by differentiating $\Pi_\varepsilon$ at zero to deduce
\[\dot{\Pi}_0 = \frac{1}{2\pi i} \oint_\gamma (\lambda + i\Lapl_X)^{-1} (-i\Lapl_Y) (\lambda + i\Lapl_X)^{-1} d\lambda = i\big(R_H(0) \Lapl_Y \Pi_0 + \Pi_0 \Lapl_Y R_H(0)\big).\]
At this point we recall that $(-i\Lapl_X - \lambda)^{-1} = R_H(\lambda) - \frac{\Pi_0}{\lambda}$. Since $\Pi_0, R_H(0)$ extend to maps $\mathcal{D}'_{E_u^*}(M; \Omega^1) \to \mathcal{D}'_{E_u^*}(M; \Omega^1)$, we have that $\dot{u} = \dot{\Pi}_0 \Lapl_Y \alpha \in \mathcal{D}'_{E_u^*}(M; \Omega^1)$.

By Theorem \ref{thm:volpres} it follows that $d\dot{u} \wedge \alpha = c \Omega$ for some constant $c$. In fact, we have
\begin{align}\label{eq:c} 
\begin{split}
c \vol(M) &= \int_{M} d \dot{u} \wedge \alpha = \int_{M} \dot{u} \wedge d\alpha = -\int_{M} \dot{u}(X) \Omega\\ 
&= \int_{M} u(Y) \Omega = W_Y(u).
\end{split}
\end{align}
In these lines we used the second equation of \eqref{eq:lin10} and $\iota_X u = 0$. Combining \eqref{eq:c}, \eqref{eq:secondderivative} and \eqref{eq:secondvarcomp} we have
\begin{align}\label{eq:secondderivative'}
	\ddot{\lambda} \langle{u, u^*}\rangle = 2c \int_{M} w(Y) \Omega = 2cW_Y(w) = \frac{2W_Y(u) W_Y(w)}{\vol(M)}.
\end{align}
Next we choose a special $u^*$.  Namely, if we write $u=\theta+d\varphi$
for some (real) smooth closed 1-form $\theta$ and $\varphi \in \mathcal{D}'_{E_u^*}(M)$, then we choose $u^*=v$ as in 
Lemma \ref{lemma:pairings}. This ensures that $\langle u,u^*\rangle < 0$ and moreover, by Lemma \ref{lemma:uandY} we have
\[\langle u,u^*\rangle=-W_{Y}(\theta)< 0.\]
Hence \eqref{eq:secondderivative'} simplifies to
\[\ddot{\lambda} = \frac{-2W_Y(\theta)}{\vol(M)} < 0.\]
By the symmetry of the Pollicott-Ruelle resonance spectrum, we have that $\lambda_\varepsilon$ is real, since otherwise we would contradict Lemma \ref{lemma:constrank}. We conclude by Taylor's theorem 
\[\lambda_\varepsilon = \varepsilon^2 \Big(-\frac{W_Y(\theta)}{ \vol(M)} + O(\varepsilon)\Big).\]
In particular $\lambda_\varepsilon$ is negative (so non-zero) for sufficiently small $\varepsilon \neq 0$. Therefore, the resonance $-i\lambda_\varepsilon$ of $-i\Lapl_{X_\varepsilon}$ splits to the upper half-plane and $0$ is a strict local maximum for $\lambda_\varepsilon$.  This completes the proof of Theorem \ref{thm:perturbationflow}.

We conclude this section with:

\begin{proof}[Proof of the first part of Corollary \ref{cor:zeta'}]
	By Corollary \ref{cor:3dfactor}, the order of vanishing of the Ruelle zeta function at zero is equal to $m_1(0) - m_0(0) - m_2(0)$. By Theorem \ref{thm:volpres}, we know $m_2(0) = m_0(0) = 1$ and by Theorem \ref{thm:perturbationflow} and Lemma \ref{lemma:constrank} we have $m_1(0) = b_1(M) - 1$ for small enough non-zero $\varepsilon$. This concludes the proof.
\end{proof}

	
	

\section{Time changes}\label{section:timechange}
In this section we consider the transformation $X \mapsto \widetilde{X} = f X$, where $X$ is an Anosov vector field and $f > 0$ a positive smooth function and call such a transformation a \emph{time-change}. By \cite[Lemma 2.1.]{DMM86}, we have that $\widetilde{X}$ is also Anosov and moreover, its stable and unstable bundle $\widetilde{E}^s$ and $\widetilde{E}^u$ are given by
\begin{equation}\label{eq:timechange}
	 \widetilde{E}^s = \{Z + \theta(Z)X: Z \in E^s\}.
\end{equation} 
Here the continuous one form $\theta$ is given by solving $\Lapl_X(f^{-1}\theta) = f^{-2}df$. Therefore, we notice that $\widetilde{E}_u^* = (\widetilde{E}^s \oplus \mathbb{R}\widetilde{X})^* = E_u^*$ and $\widetilde{E}_s^* = (\widetilde{E}^u \oplus \mathbb{R}\widetilde{X})^* = E_s^*$, where we used \eqref{eq:timechange}. This means that the resonant states associated to the flow $f X$ lie in suitable spaces $\mathcal{D}'_{E_u^*}$, which will be very convenient. 

We begin by recasting Lemma \ref{lemma:pairingsemisimple} to the case of one forms and consider a time-change.

\begin{prop}\label{prop:semisimplicitypairing}
	Let $X$ be an Anosov flow on a manifold $M$ and let $f > 0$ be a positive smooth function. Then $\Lapl_{fX}$ acting on $\Omega_0^1$ is semisimple at zero if and only if the pairing
	\begin{equation}\label{eq:pairinglie}
		\res^{(1)}_{-i\Lapl_X, \Omega_0^1}(0) \times \res^{(1)}_{-i\Lapl_X^*, \Omega_0^1}(0) \to \mathbb{C}, \,\, (u, v) \mapsto \big\langle{u/f, v}\big\rangle_{L^2(M; \Omega^1)}
	\end{equation}
	is non-degenerate. 
\end{prop}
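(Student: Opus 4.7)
The plan is to reduce the claim directly to Lemma~\ref{lemma:pairingsemisimple} applied to $P := -i\Lapl_{fX}|_{\Omega_0^1}$. The decisive observation is that on sections of the subbundle $\Omega_0^1 = \Omega^1 \cap \ker\iota_X = \Omega^1 \cap \ker\iota_{fX}$ one has the algebraic identity
\begin{equation*}
\Lapl_{fX} u = \iota_{fX}du = f\,\iota_X du = f\,\Lapl_X u,
\end{equation*}
so $\Lapl_{fX}|_{\Omega_0^1} = (f\,\cdot)\circ \Lapl_X|_{\Omega_0^1}$. Moreover, as recorded at the start of this section, the time-changed flow has the same dual stable and unstable bundles $E_s^*, E_u^*$ as $X$, so the underlying distribution spaces $\mathcal{D}'_{E_u^*}(M;\Omega_0^1)$ and $\mathcal{D}'_{E_s^*}(M;\Omega_0^1)$ coincide for $X$ and $fX$.

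Granting these, I would carry out two steps. Since $f>0$, the identity above immediately gives
\begin{equation*}
\res^{(1)}_{-i\Lapl_{fX},\Omega_0^1}(0) = \res^{(1)}_{-i\Lapl_X,\Omega_0^1}(0).
\end{equation*}
Next, fix a smooth Hermitian inner product on $\Omega_0^1$ together with a smooth volume form on $M$ defining $\langle\cdot,\cdot\rangle_{L^2}$; since $f$ is real, multiplication by $f$ is $L^2$-self-adjoint, and therefore
\begin{equation*}
(\Lapl_{fX}|_{\Omega_0^1})^* = (\Lapl_X|_{\Omega_0^1})^* \circ (f\,\cdot),
\end{equation*}
so that $\ker(\Lapl_{fX}^*)|_{\Omega_0^1} = f^{-1}\,\ker(\Lapl_X^*)|_{\Omega_0^1}$. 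The map $v\mapsto fv$ preserves wavefront sets as $f\in C^\infty(M)$, so it yields a linear bijection
\begin{equation*}
\res^{(1)}_{-i\Lapl_{fX}^*,\Omega_0^1}(0) \xrightarrow{\ \sim\ } \res^{(1)}_{-i\Lapl_X^*,\Omega_0^1}(0).
\end{equation*}

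Finally I would invoke Lemma~\ref{lemma:pairingsemisimple} applied to $P=-i\Lapl_{fX}|_{\Omega_0^1}$: it says $\Lapl_{fX}$ is semisimple at $0$ on $\Omega_0^1$ if and only if the pairing $(u,v)\mapsto\langle u,v\rangle_{L^2}$ on $\res^{(1)}_{-i\Lapl_{fX},\Omega_0^1}(0)\times\res^{(1)}_{-i\Lapl_{fX}^*,\Omega_0^1}(0)$ is non-degenerate. Substituting the bijection $v=w/f$ with $w\in\res^{(1)}_{-i\Lapl_X^*,\Omega_0^1}(0)$ and using once more that $f$ is real, the pairing rewrites as $(u,w)\mapsto\langle u, w/f\rangle_{L^2} = \langle u/f, w\rangle_{L^2}$, which is exactly \eqref{eq:pairinglie}. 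The only slightly delicate point is the adjoint computation, which must be done with care to place the factor of $1/f$ on the resonant rather than the co-resonant side; this is precisely what the $L^2$-self-adjointness of multiplication by the real, smooth, nonvanishing $f$ ensures, and no new analytic input beyond the identity $\Lapl_{fX}|_{\Omega_0^1} = f\,\Lapl_X|_{\Omega_0^1}$ is required.
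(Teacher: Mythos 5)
Your proposal is correct and follows essentially the same route as the paper: both identify $\res^{(1)}_{-i\Lapl_{fX},\Omega_0^1}(0)=\res^{(1)}_{-i\Lapl_X,\Omega_0^1}(0)$ (using that time-changes preserve $E_u^*$ and that $\Lapl_{fX}=f\Lapl_X$ on $\ker\iota_X$), compute $\Lapl_{fX}^*=\Lapl_X^*(f\cdot)$ so that the co-resonant space is $f^{-1}\res^{(1)}_{-i\Lapl_X^*,\Omega_0^1}(0)$, and then conclude via Lemma~\ref{lemma:pairingsemisimple}. No gaps.
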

\begin{proof}
	Let us determine the appropriate resonant spaces of $\Lapl_{fX}$ and $\Lapl_{fX}^*$ at zero. Note first that $\ker \Lapl_{fX} = \ker \Lapl_X$ on $\mathcal{D}'_{E_u^*}(M; \Omega_0^1)$, since time-changes preserve the $E_u^*$ set. Next, we compute $\Lapl_{fX}^* = \Lapl_X^*(f\cdot)$ on $\Omega_0^1$, with respect to a fixed smooth inner product (e.g. given by a metric). Therefore, we have
	\[\res_{-i\Lapl_{fX}^*, \Omega_0^1}^{(1)}(0) = \frac{1}{f} \res_{-i\Lapl_{X}^*, \Omega_0^1}^{(1)}(0).\]
	Thus the non-degeneracy of the pairing between resonances and co-resonances is equivalent to the non-degeneracy of \eqref{eq:pairinglie} and applying Lemma \ref{lemma:pairingsemisimple} finishes the proof.
	
\end{proof}

\subsection{Time-changes of the geodesic flow on a hyperbolic surface.}\label{subsec:time-changes} The aim of this subsection is to explicitly specify the equations for one forms in the kernel of $\Lapl_X$ on the unit sphere bundle $M = S\Sigma$ of a closed hyperbolic surface $\Sigma$. We start by considering the case of general variable curvature and use the orthonormal frame $\{\alpha, \beta, \psi\}$ constructed in Subsection \ref{subsec:structureeqns}.

Let $u \in \mathcal{D}'(M; \Omega_0^1)$. Then $u = b \beta + f \psi$ for some $b, f \in \mathcal{D}'(M)$ and we have
\begin{align*}
	du = \alpha \wedge \beta \big(X(b)- fK)\big) + \beta \wedge \psi \big(H(f) - V(b)\big) + \alpha \wedge \psi \big(b + X(f)\big).
\end{align*}
Therefore, $du = 0$ implies that
\begin{align}\label{eq:res1}
\begin{split} X(b) &= Kf,\\
X(f) &= -b,\\
H(f) &= V(b).
\end{split}
\end{align}
The first two equations are coming from $\iota_X d u = 0$. The third is an additional one, which we know holds if $u \in \mathcal{D}'_{E_u^*}(M; \Omega_0^1)$ and $\iota_X du = 0$; it can be explained as an additional horocyclic invariance (cf. \cite{FaGu17} and below).


Now we specialise to $K = -1$, i.e. the case of hyperbolic surfaces. Then in $\{\beta, \psi\}$ co-frame spanning $\Omega_0^1$, the operator $\Lapl_X$ may be written as
\[\Lapl_X = X \times Id + \begin{pmatrix}
0 & 1\\
1 & 0
\end{pmatrix}\]
and the first two equations in \eqref{eq:res1} then read 
\begin{align*}
	(X - 1) (b - f) &= 0,\\
	(X + 1) (b + f) &= 0.
\end{align*}
Thus $f = -b$ as there are no resonances with positive imaginary part, since $X$ is volume-preserving.\footnote{This can be seen from \eqref{eq:laplace}, since $e^{-itP} = \varphi_{-t}^*$ is an isometric isomorphism on $L^2(M)$ and so the integral defining the resolvent converges for $\im \lambda > 0$.} 
The third equation in \eqref{eq:res1} now gives $U_-b = 0$, where $U_- = H + V$ is the horocyclic vector field spanning $E_u$. Now we may also write, where the adjoint is with respect to the Sasaki metric on $S\Sigma$ 
\[\Lapl_X^* = -X \times Id + \begin{pmatrix}
0 & 1\\
1 & 0
\end{pmatrix}.\]
Therefore $\Lapl_X^* v = 0$, where $v = b'\beta + f'\psi$ for some $b', f' \in \mathcal{D}'_{E_s^*}(M)$, is the same as 
\begin{align*}
	(-X + 1) (b' + f') &= 0,\\
	(-X - 1) (b' - f') &= 0.
\end{align*}
Since we are looking at the vector field $-X$, no resonances with positive imaginary part gives $f' = -b'$ and so $(X + 1) b' = 0$. The third equation in \eqref{eq:res1} then reads $U_+ b' = 0$, where $U_+ = H - V$ spans the $E_s$ bundle.

Therefore, we have
\begin{align}\label{eq:correspondence}
\begin{split}
	\res_{-i\Lapl_X}^{(1)}(0) &= \big\{b(\beta - \psi) \in \mathcal{D}'(M) : (X - 1)b = 0, \,\, (H + V)b = 0\big\},\\
	\res_{-i\Lapl_X^*}^{(1)}(0) &= \big\{b(\beta - \psi) \in \mathcal{D}'(M) : (X + 1)b = 0, \,\, (H - V)b = 0\big\}.
\end{split}
\end{align}
Note that we may drop the wavefront set conditions, since they follow from the equations being satisfied. We remark that since we know $-i\Lapl_X$ at $0$ is semisimple by \cite{DyZw17}, then so is $-iX$ at $-i$ by the correspondence \eqref{eq:correspondence} and $\dim \res_{-iX}(-i) = b_1(M)$. Alternatively, we may use \cite[Theorem 1]{GHW18} to deduce semisimplicity even at the special point $-i$ for hyperbolic surfaces.
\begin{prop}
Let $f \in C^\infty(M)$ and $f > 0$. Semisimplicity for $-i\Lapl_{fX}$ at zero acting on $\Omega_0^1$ is equivalent to the non-degeneracy of the following pairing
\begin{equation}\label{eq:pairinghyp}
\res_{-iX}^{(1)}(-i) \times \res_{iX}^{(1)}(-i), \,\, (b_1, b_2) \mapsto \langle{b_1/f, b_2}\rangle_{L^2(M)}.
\end{equation}
\end{prop}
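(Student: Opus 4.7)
The plan is to apply Proposition~\ref{prop:semisimplicitypairing} and then translate the resulting pairing on $1$-forms into the scalar pairing \eqref{eq:pairinghyp} using the explicit description of resonant and co-resonant states recorded in \eqref{eq:correspondence}. By Proposition~\ref{prop:semisimplicitypairing}, semisimplicity of $-i\Lapl_{fX}$ at zero on $\Omega_0^1$ is equivalent to non-degeneracy of
$$\res^{(1)}_{-i\Lapl_X,\Omega_0^1}(0)\times\res^{(1)}_{-i\Lapl_X^*,\Omega_0^1}(0)\to\mathbb{C},\qquad(u,v)\mapsto\langle u/f,v\rangle_{L^2(M;\Omega^1)},$$
so the remaining work is to identify this with the scalar pairing \eqref{eq:pairinghyp}.

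From \eqref{eq:correspondence}, the linear map $b\mapsto b(\beta-\psi)$ is a bijection from $\res_{-iX}^{(1)}(-i)$ onto $\res^{(1)}_{-i\Lapl_X,\Omega_0^1}(0)$: the eigenvalue condition $(X-1)b=0$ rewrites as $(-iX-(-i))b=0$, the horocyclic relation $(H+V)b=0$ is automatic from the wavefront set condition $\mathrm{WF}(b)\subset E_u^*$, and $\beta-\psi$ is nowhere vanishing. Analogously, $b'\mapsto b'(\beta-\psi)$ identifies $\res_{iX}^{(1)}(-i)$ with $\res^{(1)}_{-i\Lapl_X^*,\Omega_0^1}(0)$, using that $(X+1)b'=0$ rewrites as $(iX-(-i))b'=0$ and that the natural wavefront set for resonances of $iX$ (the Pollicott--Ruelle operator associated with the reversed flow $-X$) sits in $E_s^*$.

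Since $\{\alpha,\beta,\psi\}$ is orthonormal for the Sasaki metric on $S\Sigma$ and $\beta-\psi$ is real, the pointwise inner product satisfies $\langle b(\beta-\psi),b'(\beta-\psi)\rangle=2b\overline{b'}$, whence
$$\big\langle u/f,v\big\rangle_{L^2(M;\Omega^1)}=2\,\langle b/f,b'\rangle_{L^2(M)}.$$
Non-degeneracy of the form pairing in Proposition~\ref{prop:semisimplicitypairing} is therefore equivalent to non-degeneracy of \eqref{eq:pairinghyp}, completing the proof. There is no substantive analytic obstacle: all of the content has been packaged into Proposition~\ref{prop:semisimplicitypairing} and \eqref{eq:correspondence}, and the only real care required is in lining up the spectral parameters (shifting $0$ on forms to $-i$ on scalars via $X\mp 1=\mp i(\pm iX-(-i))$) and the swap of wavefront-set conventions between $E_u^*$ and $E_s^*$ on the co-resonance side.
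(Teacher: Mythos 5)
Your proof is correct and takes essentially the same route as the paper: apply Proposition~\ref{prop:semisimplicitypairing}, identify the $1$-form resonant/co-resonant spaces with scalar ones via $b \mapsto b(\beta - \psi)$ using~\eqref{eq:correspondence}, and compute the inner product with the Sasaki-orthonormal co-frame to pick up the factor of $2$. The one place where you are terser than warranted is the claim that $(H+V)b=0$ is ``automatic from the wavefront set condition'': as stated it reads as a triviality, but what is actually needed is the horocyclic invariance of the band of Ruelle resonant states (the paper cites Faure--Guillarmou), or equivalently the fact that for a contact Anosov flow resonant $1$-forms at zero are closed (Lemma~\ref{lemma:T}), from which $(H+V)b=0$ follows once one sets $u = b(\beta-\psi)$ and observes $du = -(H+V)(b)\,\beta\wedge\psi$. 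This is a genuine (if known) analytic input, not a formal consequence of the wavefront set inclusion alone, so a pointer to where it is established would tighten the argument; otherwise the proof matches the paper's.
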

\begin{proof}
	The proof is based on the correspondence \eqref{eq:correspondence} and Proposition \ref{prop:semisimplicitypairing}. Then for $b_1(\beta - \psi) \in \res_{-i\Lapl_{fX}}^{(1)}(0)$ and $\frac{b_2}{f}(\beta - \psi) \in \res_{-i\Lapl_{fX}^*}^{(1)}(0)$, we have
	\[\big\langle{b_1(\beta - \psi), b_2/f(\beta - \psi)}\big\rangle_{L^2(M; \Omega^1)} = 2\big\langle{b_1, b_2/f }\big\rangle_{L^2(M)}.\]
	This proves that the pairing \eqref{eq:pairinghyp} is equivalent to the pairing \eqref{eq:pairinglie}, which finishes the proof.
\end{proof}
In the next sections, we would like to find out more about the pairing \eqref{eq:pairinghyp}, similar to \cite{DFG15, GHW18}, where a pairing formula for generic resonances is proved. 

\begin{rem}\rm\label{rem:speconeformshyperbolic}
	Using the decomposition $u = a \alpha + b \beta + f \psi$, by \eqref{eq:res1} it may be seen that $(\Lapl_X + s)u = 0$ is equivalent to $(X + 1 + s)(b + f) = 0$, $(X - 1 + s)(b - f) = 0$ and $(X + s)a = 0$. This enables us to determine the resonance spectrum of $\Lapl_X$ on one forms from the resonance spectrum of $X$ on functions, using the works of \cite{DFG15, GHW18}. In particular, for $\re s > -1$ we obtain $b + f = 0$, which suffices to determine the spectrum on the left in Figure \ref{fig:splitting}. The small and large eigenvalues in this figure are in the sense of \cite{BMM16}.
\end{rem}

\subsection{Reduction to distributions on the boundary.} We follow the notation from \cite[Section 3]{DFG15}. We consider the hyperboloid model 
\[\mathbb{H}^2 = \{x = (x_0, x_1, x_2) = (x_0, x') \in \mathbb{R}^3 : \langle{x, x}\rangle_{\mathcal{M}} = x_0^2 - x_1^2 - x_2^2 = 1, \, x_0 > 0\}\]
of hyperbolic geometry, equipped with the Riemannian metric $-\langle{\cdot, \cdot}\rangle_{\mathcal{M}}$, restricted to $T\mathbb{H}^2$. Here $\langle{\cdot, \cdot}\rangle_{\mathcal{M}}$ is called the Lorentzian metric. We also consider the action the isometry group $G = PSO(1, 2)$ of $\mathbb{H}^2$, consisting of matrices preserving the Lorentzian metric, orientation and the sign of $x_0$. This action extends to an action on the unit sphere bundle $S\mathbb{H}^2$, since $G$ consists of isometries and in fact $G \ni \gamma \mapsto \gamma \cdot (1, 0, 0, 0, 1, 0) \in S\mathbb{H}^2$ is a diffeomorphism. We also have explicitly
\begin{equation}\label{eq:identify}
	S\mathbb{H}^2 = \{(x, \xi) \in \mathbb{H}^2 : x, \xi \in \mathbb{R}^3,\, \langle{\xi, \xi}\rangle_{\mathcal{M}} = -1,\, \langle{x, \xi}\rangle_{\mathcal{M}} = 0\}.
\end{equation}
We will write $\varphi_t$ for the geodesic flow on $S\mathbb{H}^2$ and $X$ for the geodesic vector field. In the identification \eqref{eq:identify}, we may write
\[X = \xi \cdot \partial_x + x \cdot \partial_\xi.\]
Therefore the geodesic flow on $S\mathbb{H}^2$ may be explicitly written as
\begin{equation}\label{eq:geodesicflowhyperboloid}
	\varphi_t(x, \xi) = (x \cosh t + \xi \sinh t, x\sinh t + \xi \cosh t).
\end{equation}
We may compactify $\mathbb{H}^2$ to the closed unit ball $\overline{B^2}$ by embedding it with the map $\psi_0(x) = \frac{x'}{x_0 + 1}$ and we call $S^1$ bounding $B^2$ the \emph{boundary at infinity}. Note that to a point $\nu \in S^1$ we may associate a ray $\{(s, s\nu) : s > 0\}$, which is asymptotic to the hyperboloid ray $\{(\sqrt{1 + s^2}, s\nu): s > 0\}$. The action of $G$ extends to an action on the boundary at infinity $S^1$ as follows. Let $\gamma \in G$ and $\nu \in S^1$. Then the matrix action on $\mathbb{R}^3$
\begin{equation}
	\gamma \cdot (1, \nu) = N_\gamma(\nu) \big(1, L_\gamma(\nu)\big)
\end{equation}
defines an action of $\gamma \in G$ on $S^1$ via $L_\gamma$. It also defines the multiplicative map $N_\gamma: S^1 \to \mathbb{R}_+$. 

Denote by $\pi:S\mathbb{H}^2 \to \mathbb{H}^2$ the footpoint projection. We will consider the mappings
\begin{equation}\label{eq:B+-}
	B_\pm(x, \xi): S\mathbb{H}^2 \to S^1,\quad B_\pm(x, \xi) = \lim_{t \to \pm\infty}\pi(\varphi_t(x, \xi)).
\end{equation}
The limit in \eqref{eq:B+-} is interpreted as the point of intersection of the geodesic starting at $x$ and with tangent vector $\xi$ with the boundary at infinity. We introduce also 
\begin{equation}\label{eq:Phi+-}
\Phi_\pm: S\mathbb{H}^2 \to \mathbb{R}_+, \quad \Phi_\pm(x, \xi) := x_0 \pm \xi_0 > 0.
\end{equation}
In fact, then we can write for any $(x, \xi) \in S\mathbb{H}^2$
\begin{equation}\label{eq:x+-xi}
x \pm \xi = \Phi_\pm(x, \xi) \big(1, B_\pm(x, \xi)\big).
\end{equation}
The maps $B_\pm$ and $\Phi_\pm$ have nice interactions with the geodesic vector field $X$ and the horocyclic vector fields $U_\pm$, defined in Subsection \ref{subsec:time-changes}. By this we mean that 
\begin{equation}\label{eq:B+-derivatives}
	dB_\pm \cdot X = 0, \quad U_\pm B_\pm = 0.
\end{equation} 
The first equation holds since $B_\pm$ is constant along $X$ and the second one since $B_\pm$ is constant along horospheres. We also have
\begin{equation}\label{eq:phiderivatives}
	X \Phi_\pm = \pm \Phi_\pm, \quad U_\pm \Phi_\pm = 0.
\end{equation} 
Here, the first equation follows from $\Phi_\pm(\varphi_t(x, \xi)) = e^{\pm t} \Phi_\pm(x, \xi)$, which is true by equation \eqref{eq:geodesicflowhyperboloid}. The second one also follows from a computation. Finally, since $\langle{x + \xi, x - \xi}\rangle_{\mathcal{M}} = 2$ and by \eqref{eq:x+-xi}, for $(x, \xi) \in S\mathbb{H}^2$, we have 
\begin{equation}\label{eq:+-identity}
	\Phi_+(x, \xi) \Phi_-(x, \xi) \big(1 - B_+(x, \xi) \cdot B_-(x, \xi)\big) = 2.
\end{equation}
The maps $\Phi_\pm$ and $B_\pm$ are $G$-equivariant in the following sense. We have
\begin{equation}\label{eq:+-invariance}
	B_\pm\big(\gamma \cdot (x, \xi)\big) = L_\gamma\big(B_\pm(x, \xi)\big), \quad \Phi_\pm\big(\gamma \cdot(x, \xi)\big) = N_\gamma\big(B_\pm(x, \xi)\big) \Phi_\pm(x, \xi).
\end{equation}
Now the Jacobian of the map $L_\gamma: S^1 \to S^1$ may be computed explicitly and is given by
\begin{equation}\label{eq:jacobianG}
	\big\langle{dL_\gamma(\nu) \cdot \zeta_1, dL_\gamma(\nu) \cdot \zeta_2}\big\rangle_{\mathbb{R}^2} = 
	N_\gamma(\nu)^{-2} \langle{\zeta_1, \zeta_2}\rangle_{\mathbb{R}^2}, \quad \zeta_1, \zeta_2 \in T_\nu S^1.
\end{equation}

Consider $\Sigma = \Gamma \backslash \mathbb{H}^2$ a compact hyperbolic surface, where $\Gamma \subset PSO(1, 2)$ is a discrete subgroup. Then we may identify the unit sphere bundle as $S\Sigma = \Gamma \backslash S\mathbb{H}^2$. We introduce the space of boundary distributions as
\begin{equation}\label{eq:bd0}
	\Bd^0(\lambda) = \{w \in \mathcal{D}'(S^1) \mid L_\gamma^* w(\nu) = N_\gamma^{-\lambda}(\nu) w(\nu), \,\, \gamma \in \Gamma, \,\, \nu \in S^1\}.
\end{equation}
The generator $X$ of the geodesic flow descends to $S\Sigma$ and we define the \emph{first band resonant states} by 
\[\res_X^0(\lambda) = \{u \in \mathcal{D}'_{E_u^*}(S\Sigma) \mid (X + \lambda)u = 0, \,\, U_-u = 0\}.\]
We similarly introduce the first band co-resonant states via (cf. Subsection \ref{subsec:rescores})
\[\res_{X^*}^0(\lambda) = \{u \in \mathcal{D}'_{E_s^*}(S\Sigma) \mid (X - \overline{\lambda})u = 0, \,\, U_+u = 0\}.\]
Then we have the correspondence, valid for all $\lambda \in \mathbb{C}$ proved in \cite[Lemma 5.6]{DFG15}, that we prove here for completeness. Note that by $\Phi_\pm^\lambda$ for $\lambda \in \mathbb{C}$ we simply mean the exponentiation of the function $\Phi_\pm > 0$ by the exponent $\lambda$.
\begin{lemma}\label{lemma:boundarydistributions}
	Let $\pi_\Gamma: S\mathbb{H}^2 \to S\Sigma$ be the natural projection. Then
	\begin{equation}
		\pi^*_\Gamma \res_X^0(\lambda) = \Phi_-^\lambda B_-^* \Bd^0(\lambda).
	\end{equation}
	Similarly we have, for the space of co-resonant states
	\begin{equation}
		\pi^*_\Gamma \res_{X^*}^0(\lambda) = \Phi_+^{\overline{\lambda}} B_+^* \Bd^0(\overline{\lambda}).
	\end{equation}
	We also have $\overline{\Bd^0(\lambda)} = \Bd^0(\overline{\lambda})$.
\end{lemma}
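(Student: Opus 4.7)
The plan is to establish each of the three claims separately, working on the universal cover $S\mathbb{H}^2$ and then imposing $\Gamma$-equivariance; the argument parallels \cite[Lemma 5.6]{DFG15}. The algebraic input is provided by the identities $X\Phi_\pm = \pm\Phi_\pm$, $U_\pm \Phi_\pm = 0$, $dB_\pm \cdot X = 0$, $U_\pm B_\pm = 0$ from \eqref{eq:B+-derivatives}--\eqref{eq:phiderivatives}, combined with the $G$-equivariance \eqref{eq:+-invariance}.

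For the inclusion $\pi_\Gamma^* \res_X^0(\lambda) \supseteq \Phi_-^\lambda B_-^* \Bd^0(\lambda)$, given $w \in \Bd^0(\lambda)$ I set $u := \Phi_-^\lambda B_-^* w$ on $S\mathbb{H}^2$. Using $X\Phi_-^\lambda = -\lambda \Phi_-^\lambda$ and $X(B_-^* w) = 0$ one checks $(X+\lambda)u = 0$, and similarly $U_-\Phi_- = U_-(B_-^*w) = 0$ yield $U_- u = 0$. The wavefront set of $B_-^* w$ lies in the conormal to the fibres of the submersion $B_-$, namely the annihilator of $\ker dB_- = \mathbb{R} X \oplus E_u$, which is precisely $E_u^*$. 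Finally, $\Gamma$-invariance of $u$ follows from
\[
\gamma^* u \;=\; N_\gamma(B_-)^\lambda\,\Phi_-^\lambda\,B_-^*(L_\gamma^* w) \;=\; \Phi_-^\lambda\,B_-^*\bigl(N_\gamma^\lambda L_\gamma^* w\bigr) \;=\; \Phi_-^\lambda\,B_-^* w \;=\; u,
\]
where we invoked \eqref{eq:+-invariance} and the defining relation $L_\gamma^* w = N_\gamma^{-\lambda} w$ of $\Bd^0(\lambda)$; thus $u$ descends to a first-band resonant state on $S\Sigma$.

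For the reverse inclusion, lift $u \in \res_X^0(\lambda)$ to a $\Gamma$-invariant distribution $\tilde u$ on $S\mathbb{H}^2$ with $\mathrm{WF}(\tilde u) \subset E_u^*$ and set $v := \Phi_-^{-\lambda} \tilde u$ (well-defined since $\Phi_- > 0$ is smooth). The resonance equations translate into $X v = 0$ and $U_- v = 0$, so $v$ is invariant under the two-dimensional flow generated by $X$ and $U_-$. Since these vector fields both annihilate $B_-$, span a rank-two subbundle of $TS\mathbb{H}^2$, and $B_-$ is a submersion onto a one-dimensional manifold, the orbits of $\{X, U_-\}$ exhaust the fibres of $B_-$. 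Combined with the wavefront condition $\mathrm{WF}(v) \subset E_u^*$, which is exactly conormal to these fibres, H\"ormander's pullback/pushforward theorem for distributions \cite{HoI+II} produces a unique $w \in \mathcal{D}'(S^1)$ with $v = B_-^* w$. Then the $\Gamma$-invariance of $\tilde u$, rewritten via \eqref{eq:+-invariance}, forces $L_\gamma^* w = N_\gamma^{-\lambda} w$, i.e.\ $w \in \Bd^0(\lambda)$.

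The co-resonant statement is proved by the mirror-image argument, with $(\Phi_-, B_-, U_-, \lambda)$ replaced by $(\Phi_+, B_+, U_+, \overline{\lambda})$: here $X\Phi_+ = \Phi_+$ yields $(X-\overline{\lambda})(\Phi_+^{\overline{\lambda}} B_+^* w) = 0$, the wavefront set of the pullback now lies in the annihilator of $\ker dB_+ = \mathbb{R}X \oplus E_s$ which is $E_s^*$, and the equivariance becomes $L_\gamma^* w = N_\gamma^{-\overline{\lambda}} w$, i.e.\ $w \in \Bd^0(\overline{\lambda})$. The identity $\overline{\Bd^0(\lambda)} = \Bd^0(\overline{\lambda})$ is immediate: since $N_\gamma$ is real and positive and $L_\gamma$ is a real diffeomorphism, complex-conjugating $L_\gamma^* w = N_\gamma^{-\lambda} w$ interchanges $\lambda$ with $\overline{\lambda}$. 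The main technical step I anticipate is making the identification $v = B_-^* w$ in the reverse inclusion fully rigorous in the distributional category: one needs transversality of the $\{X, U_-\}$-foliation to a local slice of $B_-$, with the wavefront condition $\mathrm{WF}(v) \subset E_u^*$ supplying the regularity required to apply the distributional pullback/pushforward machinery.
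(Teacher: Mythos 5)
Your proposal is correct and follows essentially the same route as the paper: pull back $w$ by $B_-$, twist by $\Phi_-^\lambda$, verify the flow and horocyclic invariance via \eqref{eq:B+-derivatives}--\eqref{eq:phiderivatives}, check $\Gamma$-equivariance via \eqref{eq:+-invariance}, and run the argument backwards by factoring a distribution killed by $X$ and $U_-$ through $B_-$. You spell out the reverse inclusion and the conjugation identity $\overline{\Bd^0(\lambda)}=\Bd^0(\overline\lambda)$ more explicitly than the paper (which dismisses both briefly), but the substance and the key input -- that $X,U_-$ span $\ker dB_-$ so that invariant distributions are pullbacks -- coincide.
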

\begin{proof}
	Let $w \in \Bd^0(\lambda)$ and put $v = \Phi_-^\lambda B_-^*w \in \mathcal{D}'(S\mathbb{H}^2)$ (pullback of distributions under submersions is well-defined, see \cite[Corollary 7.9]{GrSj94}). We use now the invariance properties $\Phi_\pm$ and $B_\pm$ given by \eqref{eq:+-invariance} to prove $v$ is $\Gamma$-invariant. For $\gamma \in \Gamma$ we have
	\[\gamma^* v = (\gamma^*\Phi_-)^\lambda \gamma^* B_-^* w = B_-^* (N_\gamma)^\lambda \Phi_-^\lambda B_-^* L_\gamma ^* w = \Phi_-^\lambda B_-^* w = v.\] 
	Thus $v$ is $\Gamma$-invariant and descends to $\mathcal{D}'(SM)$.

	Now using equations \eqref{eq:B+-derivatives} and \eqref{eq:phiderivatives}, we obtain directly that $(X + \lambda)v = 0$ and $U_- v = 0$. This proves $\Phi_-^\lambda B_-^* \Bd^0(\lambda) \subset \pi_\Gamma^*\res_X^0(\lambda)$ (the wavefront set condition on $v$ follows from \cite[Chapter 7]{GrSj94}). The other direction follows by reversing the steps above and noting that a function (distribution) invariant by $X$ and $U_-$ is immediately a pullback by $B_-$. The statement about co-resonant states follows similarly.
\end{proof}

We now introduce the set of coordinates $(\nu_-, \nu_+, s) \in (S^1 \times S^1)_\Delta \times \mathbb{R}$ on $S\mathbb{H}^2$, yielding a diffeomorphism $F: (S^1 \times S^1)_\Delta \times \mathbb{R} \to S\mathbb{H}^2$, and given by identification
\begin{equation}\label{eq:coords}
	(\nu_-, \nu_+, s) = \Big(B_-(x, \xi), B_+(x, \xi), \frac{1}{2} \log \frac{\Phi_+(x, \xi)}{\Phi_-(x, \xi)}\Big).
\end{equation}
Here $(S_1 \times S_1)_\Delta$ denotes the torus $S^1 \times S^1$ without the diagonal $\Delta$. The coordinates \eqref{eq:coords} can be interpreted as: $(\nu_-, \nu_+)$ parameterizes the geodesic $\gamma$ starting at $\nu_-$ and ending at $\nu_+$ and $s$ is the parameter on this geodesic, such that $\gamma(-s)$ is the point on $\gamma$ closest to $e_0 = (1, 0, 0)$ (or $0$ in the disk model). The geodesic flow in these coordinates is simply $\varphi_t: (\nu_-, \nu_+, s) \mapsto (\nu_-, \nu_+, s + t)$.

The coordinates \eqref{eq:coords} enable us to write a product of distributions in resonant and co-resonant spaces more explicitly, but we first require an explicit computation of the Jacobian of the change of coordinates $(x, \xi) \to (\nu_-, \nu_+, s)$.

\begin{lemma}\label{lemma:jacobian}
	For the coordinate system introduced in \eqref{eq:coords}, we have the equality
	\begin{equation}\label{eq:Jacobian}
		F^*(dx d\xi) = \frac{2 d\nu_- d\nu_+ ds}{|\nu_- - \nu_+|^2} . 
	\end{equation}
\end{lemma}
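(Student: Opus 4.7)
The strategy is to leverage the fact that $G = PSO(1,2)$ acts simply transitively on $S\mathbb{H}^2$: I will show both sides of \eqref{eq:Jacobian} are $G$-invariant 3-forms, hence equal up to a scalar, and then pin down the scalar at a single convenient point.

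\textbf{Step 1: $G$-invariance of both sides.} The left-hand side is the Liouville (equivalently, Sasaki--Riemannian) volume on $S\mathbb{H}^2$, which is $G$-invariant since $G$ acts by isometries. For the right-hand side, I would use the transformation rules \eqref{eq:+-invariance}: under $\gamma \in G$, we have $\nu_\pm \mapsto L_\gamma(\nu_\pm)$ and, by the definition \eqref{eq:coords} of $s$, the parameter transforms as $s \mapsto s + \tfrac{1}{2}\log(N_\gamma(\nu_+)/N_\gamma(\nu_-))$; since the shift depends only on $\nu_\pm$, in the wedge $ds \wedge d\nu_- \wedge d\nu_+$ the correction drops out. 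The Jacobian formula \eqref{eq:jacobianG} gives $L_\gamma^* d\nu_\pm = N_\gamma^{-1}(\nu_\pm)\, d\nu_\pm$. For the denominator, using $|\nu_1 - \nu_2|^2 = 2\langle{(1,\nu_1),(1,\nu_2)}\rangle_{\mathcal{M}}$ combined with $\gamma\cdot(1,\nu) = N_\gamma(\nu)(1,L_\gamma\nu)$ and the $G$-invariance of $\langle\cdot,\cdot\rangle_{\mathcal{M}}$, I obtain the key identity
\begin{equation*}
  |L_\gamma\nu_- - L_\gamma\nu_+|^2 = N_\gamma^{-1}(\nu_-)\, N_\gamma^{-1}(\nu_+)\, |\nu_- - \nu_+|^2.
\end{equation*}
The $N_\gamma^{-1}$ factors then cancel between numerator and denominator, proving the RHS is $G$-invariant.

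\textbf{Step 2: Reduction to a base point.} Since $G$ acts simply transitively on $S\mathbb{H}^2$ (dimensions match and the stabilizer is trivial), any $G$-invariant top-degree form is determined by its value at a single point. I choose the base point $(x_0,\xi_0) = ((1,0,0),(0,1,0))$. From \eqref{eq:x+-xi}, $\Phi_\pm(x_0,\xi_0) = 1$ and $(\nu_-,\nu_+,s) = ((-1,0),(1,0),0)$.

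\textbf{Step 3: Base-point Jacobian computation.} I would parametrize $\nu_+ = (\cos\alpha,\sin\alpha)$, $\nu_- = (\cos(\pi+\beta),\sin(\pi+\beta))$ with $\alpha,\beta$ small. From $\Phi_+\Phi_-(1-\nu_-\cdot\nu_+) = 2$ and $\Phi_+/\Phi_- = e^{2s}$, I get $\Phi_\pm = \sqrt{2/(1-\nu_-\cdot\nu_+)}\,e^{\pm s}$, and then
\begin{equation*}
  x = \tfrac{1}{2}\bigl(\Phi_+(1,\nu_+) + \Phi_-(1,\nu_-)\bigr), \qquad \xi = \tfrac{1}{2}\bigl(\Phi_+(1,\nu_+) - \Phi_-(1,\nu_-)\bigr).
\end{equation*}
First-order Taylor expansion at the base point gives $x \approx (1, s, (\alpha-\beta)/2)$ and $\xi \approx (s, 1, (\alpha+\beta)/2)$. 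In the chart $(x_1,x_2)$ on $\mathbb{H}^2$ with $x_0 = \sqrt{1+x_1^2+x_2^2}$, the hyperbolic area form at $(0,0)$ is simply $dx_1\,dx_2$, and the fiber angle $\theta$ of $\xi$ in $T_{x_0}\mathbb{H}^2$ measured from $(0,1,0)$ satisfies $\theta \approx (\alpha+\beta)/2$ to first order. The Jacobian matrix of $(s,\alpha,\beta)\mapsto(x_1,x_2,\theta)$ has determinant $1/2$, so $F^*(dx\,d\xi)\big|_{\text{base}} = \tfrac{1}{2}\,ds\,d\alpha\,d\beta$. On the other hand, $|\nu_- - \nu_+|^2 = 4$ at the base, giving RHS $= \tfrac{2\,ds\,d\alpha\,d\beta}{4} = \tfrac{1}{2}\,ds\,d\alpha\,d\beta$. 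The two match, fixing the proportionality constant at $1$.

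\textbf{Main obstacle.} The principal subtlety is verifying the invariance of the RHS --- specifically, the scaling identity for $|\nu_- - \nu_+|^2$ under $G$. Once that algebraic identity is in hand, the rest is a short linearization at a convenient point. An alternative to Step 1 would be a direct global computation of the Jacobian from the explicit formulas for $x,\xi$, but this is messier and the invariance argument seems considerably cleaner.
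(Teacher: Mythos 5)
Your proof is correct. It is, however, a genuinely different route from the paper's: the paper disposes of this lemma in one line by citing \cite[Theorem 8.1.1]{Ni}, whereas you supply a complete, self-contained argument. Your two key points both check out: (i) the scaling identity $|L_\gamma\nu_- - L_\gamma\nu_+|^2 = N_\gamma^{-1}(\nu_-)N_\gamma^{-1}(\nu_+)|\nu_- - \nu_+|^2$ follows exactly as you say from $|\nu_1-\nu_2|^2 = 2\langle{(1,\nu_1),(1,\nu_2)}\rangle_{\mathcal{M}}$, the defining relation $\gamma\cdot(1,\nu)=N_\gamma(\nu)(1,L_\gamma\nu)$ and Lorentz invariance, and combined with \eqref{eq:jacobianG} and the fact that the additive shift in $s$ depends only on $(\nu_-,\nu_+)$, it gives $G$-invariance of the right-hand side of \eqref{eq:Jacobian}; (ii) the base-point linearization $x\approx(1,s,(\alpha-\beta)/2)$, $\xi\approx(s,1,(\alpha+\beta)/2)$ is correct and yields Jacobian $1/2$, matching $2/|\nu_--\nu_+|^2 = 1/2$ at $\nu_\pm=(\pm 1,0)$. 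Two small points are worth making explicit if you write this up: the fibre angle $\theta$ depends on a choice of reference frame over the base, but any two choices differ by a function of $x$ alone, so the ambiguity drops out of the wedge $dx_1\wedge dx_2\wedge d\theta$ and your first-order identification $\theta\approx\xi_2$ is legitimate; and one should say that $dx\,d\xi$ denotes the Liouville (Sasaki) volume, whose $G$-invariance is what Step 1 uses. What your approach buys is transparency and independence from an external reference; what the citation buys is brevity, since the statement is classical (the right-hand side is the standard Liouville current in the boundary parametrization of the geodesic flow).
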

\begin{proof}
This is the content of \cite[Theorem 8.1.1]{Ni}.
\end{proof}

\begin{rem}\rm
	The Jacobian popping up in Lemma \ref{lemma:jacobian} is well-known and the current in \eqref{eq:Jacobian} is called the \emph{Liouville current}. 
\end{rem}

We now prove that the invariant distributions formed as products of resonant and co-resonant states have a very nice form in the coordinates \eqref{eq:coords}.

\begin{prop}\label{prop:+-representation}
	Let $w_1 \in \Bd^0(\lambda)$ and $w_2 \in \Bd^0(\overline{\lambda})$, and consider the invariant distributions $v_1 = \Phi_-^\lambda B_-^*w_1$ and $v_2 = \Phi_+^{\overline{\lambda}} B_+^* w_2$ constructed in Lemma \ref{lemma:boundarydistributions}. Then the product distribution in $(\nu_-, \nu_+, s)$ coordinates takes the form\footnote{Formally, by \eqref{eq:+-product} we mean an equality in the sense of $0$-currents. More explicitly, we mean an equality in the sense of distributions $\big\langle{2^{2\lambda + 1}\frac{w_1(\nu_-) \overline{w}_2(\nu_+)}{|\nu_- - \nu_+|^{2(\lambda + 1)}}, f}\big\rangle_{(S^1 \times S^1)_\Delta \times \mathbb{R}} = \big\langle{v_1\overline{v}_2, f \circ F^{-1}}\big\rangle_{S\mathbb{H}^2}$.}
	\begin{align}\label{eq:+-product}
	F^*\big((v_1\overline{v}_2)(x, \xi) dx d\xi\big) = 2^{2\lambda + 1}\frac{w_1(\nu_-) \overline{w}_2(\nu_+)}{|\nu_- - \nu_+|^{2(\lambda + 1)}}d\nu_- d\nu_+ ds.
	\end{align}
	In particular, for $\lambda = -1$ the product $F^*(v_1 \overline{v}_2)$ extends to a distribution on $S^1 \times S^1 \times \mathbb{R}$.
\end{prop}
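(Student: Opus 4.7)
The plan is to compute everything directly in the $(\nu_-,\nu_+,s)$ coordinates, by first expressing $\Phi_\pm$ in closed form and then combining with the Jacobian from Lemma \ref{lemma:jacobian}.

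First I would translate the factors $\Phi_\pm$ into the new coordinates. From \eqref{eq:coords} we directly read $\Phi_+/\Phi_- = e^{2s}$, while \eqref{eq:+-identity} gives $\Phi_+\Phi_- = 2/(1-B_+\cdot B_-)$. Since $|B_\pm|=1$ on $S^1$, the identity $|\nu_--\nu_+|^2 = 2(1-\nu_-\cdot\nu_+)$ yields
\begin{equation*}
F^*(\Phi_-\Phi_+) = \frac{4}{|\nu_- - \nu_+|^2}, \qquad F^*\Phi_\pm = \frac{2e^{\pm s}}{|\nu_- - \nu_+|}.
\end{equation*}
The factor $e^{\pm s}$ cancels when I multiply $\Phi_-^\lambda$ against $\overline{\Phi_+^{\overline{\lambda}}}=\Phi_+^{\lambda}$, leaving
\begin{equation*}
F^*\bigl(\Phi_-^{\lambda}\Phi_+^{\lambda}\bigr) = \frac{2^{2\lambda}}{|\nu_--\nu_+|^{2\lambda}}.
\end{equation*}

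Next I would assemble the product. By construction $v_1=\Phi_-^\lambda B_-^* w_1$ and $\overline{v}_2 = \Phi_+^{\lambda} B_+^*\overline{w}_2$ (using that $\Phi_+$ is real positive so $\overline{\Phi_+^{\overline\lambda}}=\Phi_+^{\lambda}$). Since $F$ sends $(\nu_-,\nu_+,s)$ to a point where $B_\pm$ returns $\nu_\pm$, I have $F^*(B_-^*w_1)=w_1(\nu_-)$ and $F^*(B_+^*\overline{w}_2)=\overline{w}_2(\nu_+)$. Combining with Lemma \ref{lemma:jacobian},
\begin{equation*}
F^*\bigl((v_1\overline{v}_2)\,dx\,d\xi\bigr) = \frac{2^{2\lambda}w_1(\nu_-)\overline{w}_2(\nu_+)}{|\nu_--\nu_+|^{2\lambda}}\cdot\frac{2\,d\nu_-d\nu_+ds}{|\nu_--\nu_+|^2} = \frac{2^{2\lambda+1}w_1(\nu_-)\overline{w}_2(\nu_+)}{|\nu_--\nu_+|^{2(\lambda+1)}}\,d\nu_-\,d\nu_+\,ds,
\end{equation*}
which is the claimed identity. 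The final assertion for $\lambda=-1$ is then immediate: the exponent $2(\lambda+1)$ vanishes, the denominator becomes $1$, and the tensor $w_1(\nu_-)\overline{w}_2(\nu_+)\,d\nu_-d\nu_+ds$ is a well-defined distribution on all of $S^1\times S^1\times\mathbb{R}$ (including the diagonal).

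The main obstacle is distributional rigor rather than the computation itself. The product $v_1\overline{v}_2$ must be shown to make sense: the pullbacks $B_\pm^* w_i$ are legal as pullbacks under submersions by \cite[Chapter 7]{GrSj94}, and by Lemma \ref{lemma:boundarydistributions} one has $v_1\in\mathcal{D}'_{E_u^*}$ and $\overline{v}_2\in\mathcal{D}'_{E_s^*}$ after descending to $S\Sigma$ (the wavefront set is symmetric under complex conjugation). Since $E_u^*\cap(-E_s^*)=\emptyset$ in $T^*M\setminus 0$, H\"ormander's criterion on multiplication of distributions applies and $v_1\overline{v}_2$ is a well-defined distribution; the identity above then holds as an equality of $0$-currents, i.e.\ tested against compactly supported smooth functions on $(S^1\times S^1)_\Delta\times\mathbb{R}$. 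The analytic substance of the last sentence of the proposition is that, even though $v_1$ and $\overline{v}_2$ live in different anisotropic spaces, the explicit formula reveals that their product is in fact smoother than the general H\"ormander criterion would predict, precisely in the case $\lambda=-1$ where the anisotropy cancels and the singular denominator disappears.
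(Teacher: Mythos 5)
Your proposal is correct and follows essentially the same route as the paper: both use the identity \eqref{eq:+-identity} together with $|\nu_--\nu_+|^2 = 2(1-\nu_-\cdot\nu_+)$ to write $F^*\big((\Phi_-\Phi_+)^\lambda\big) = 2^{2\lambda}|\nu_--\nu_+|^{-2\lambda}$ and then combine with the Jacobian of Lemma \ref{lemma:jacobian}. Your explicit closed forms for the individual factors $F^*\Phi_\pm$ (with the cancelling $e^{\pm s}$) and the remarks on the wavefront-set justification of the product are harmless elaborations of the same argument.
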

\begin{proof} 
By definition, we have the following expression for the product $v_1 \overline{v}_2$
\begin{equation}\label{eq:product}
	\big(v_1 \overline{v}_2\big)(x, \xi) = \big(\Phi_-(x, \xi) \Phi_+(x, \xi)\big)^\lambda B_-^*w_1(x, \xi) B_+^*\overline{w}_2(x, \xi).
\end{equation}
Now changing the coordinates to $(\nu_-, \nu_+, s)$ given in \eqref{eq:coords} and by using the identity \eqref{eq:+-identity} we get
\begin{equation}\label{eq:product'}
	F^*(v_1\overline{v}_2\big)(\nu_-, \nu_+, s) = 2^\lambda (1 - \nu_- \cdot \nu_+)^{-\lambda} w_1(\nu_-) \overline{w}_2(\nu_+) = 2^{2\lambda} \frac{w_1(\nu_-) \overline{w}_2(\nu_+)}{|\nu_- - \nu_+|^{2\lambda}}.
\end{equation}
Using the Jacobian computation in Lemma \ref{lemma:jacobian}, we establish \eqref{eq:+-product}.


In the special case $\lambda = - 1$, using \eqref{eq:+-product} we may write 
\begin{equation}\label{eq:-1product}
	F^*\big(v_1\overline{v}_2 (x, \xi) dx d\xi\big) = \frac{1}{2} w_1(\nu_-) \overline{w}_2(\nu_+) ds d\nu_- d\nu_+.
\end{equation}
In particular, for $\lambda = -1$ the distribution $F^*(v_1 \overline{v}_2)$ extends to a distribution on the space $S^1 \times S^1 \times \mathbb{R}$.

\end{proof}

\begin{rem}\rm
The distributions in \eqref{eq:+-identity} are called Patterson-Sullivan type distributions. See \cite{AZ07} for more details, where the particular case of $\lambda = -\frac{1}{2} + ir_j$ is studied, in connection to eigenvalues of $\Delta$ on $\Sigma$ with eigenvalue $\frac{1}{4} + r_j^2$. Note however there is an extra factor of $|\nu_- - \nu_+|^2$ compared to \eqref{eq:product'}, obtained by changing coordinates according to \eqref{eq:coords}.
\end{rem}

\subsection{Construction of a time-change that is not semisimple on one-forms}

Here we construct a smooth, positive function on the unit sphere bundle $S\Sigma$ of a compact hyperbolic surface $\Sigma = \Gamma \backslash \mathbb{H}^2$ such that under a time-change of the geodesic flow, the action of the Lie derivative on resonant $1$-forms at zero is not semisimple. We establish a few auxiliary lemmas first. We denote by $\pi_\Gamma: \mathbb{H}^2 \to \Gamma \backslash \mathbb{H}^2$ the associated projection.



\begin{lemma}\label{lemma:wproperties}
	Let $w \in \Bd^0(-1)$. Then $w(\nu) d\nu$ is $\Gamma$-invariant and we have
	\[\int_{S^1} w(\nu) d\nu = 0.\]
\end{lemma}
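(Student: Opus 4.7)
For the first assertion, the plan is to combine the Jacobian formula \eqref{eq:jacobianG} with the defining condition of $\Bd^0(-1)$. Specialized to the one-dimensional setting of $S^1$, \eqref{eq:jacobianG} yields $L_\gamma^*\,d\nu = N_\gamma(\nu)^{-1}\,d\nu$, while the defining identity of $\Bd^0(-1)$ is $L_\gamma^* w = N_\gamma w$. Their product gives $L_\gamma^*(w\,d\nu) = (N_\gamma w)(N_\gamma^{-1}\,d\nu) = w\,d\nu$, as required.

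For the vanishing of $\int_{S^1} w\,d\nu$, I would lift the problem to $S\mathbb{H}^2$ by forming
\[
v := \Phi_-^{-1} B_-^* w,
\]
which by Lemma \ref{lemma:boundarydistributions} descends to an element of $\res_X^0(-1) \subset \mathcal{D}'_{E_u^*}(S\Sigma)$; in particular $Xv = v$ on $S\Sigma$. Since $\Lapl_X\Omega = 0$, the formal $L^2(\Omega)$-adjoint of $X$ is $-X$, so pairing against the constant function $1$ yields
\[
\int_{S\Sigma} v\,\Omega \;=\; \int_{S\Sigma}(Xv)\,\Omega \;=\; -\langle v,\,X(1)\rangle_{L^2} \;=\; 0,
\]
a pairing which is rigorous because $v$ has wavefront set disjoint from the zero section.

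The remaining ingredient is to evaluate the same integral via fibre integration along $\pi: S\Sigma \to \Sigma$. The crux is the Poisson-kernel Jacobian identity
\[
B_-|_{S_x\mathbb{H}^2}: S_x\mathbb{H}^2 \to S^1, \qquad d\xi = \Phi_-\big(x,\xi(\nu)\big)\,d\nu.
\]
At $x = e_0 = (1,0,0)$ the map is $(0,v) \mapsto -v$ with $\Phi_- \equiv 1$, so the identity is immediate. For arbitrary $x = g \cdot e_0$ with $g \in G$, it propagates by combining the equivariance $\Phi_-(g\,\cdot\,) = (N_g \circ B_-)\,\Phi_-$ from \eqref{eq:+-invariance} with $L_g^*\,d\nu = N_g^{-1}\,d\nu$ from \eqref{eq:jacobianG}. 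Substituting, the factors $\Phi_-^{-1}$ and $\Phi_-$ cancel, so
\[
(\pi_* v)(x) \;=\; \int_{S_x\mathbb{H}^2}\Phi_-^{-1}\,w(B_-)\,d\xi \;=\; \int_{S^1} w\,d\nu
\]
is a constant distribution on $\mathbb{H}^2$, hence descends to a constant on $\Sigma$. Fubini ($\Omega = d\mathrm{vol}_\Sigma \wedge d\xi$ up to normalization) then gives $0 = \int_{S\Sigma}v\,\Omega = \mathrm{vol}(\Sigma)\int_{S^1} w\,d\nu$, which forces $\int_{S^1} w\,d\nu = 0$.

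The main technical obstacle is distributional: one must justify the pullback $B_-^* w$ (legitimate since $B_-$ is a submersion), the pushforward $\pi_* v$ (legitimate since $\pi$ has compact fibre and $v \in \mathcal{D}'_{E_u^*}$), and the extension of the Jacobian identity $d\xi = \Phi_-\,d\nu$ to distributional $w$ (which amounts to continuity of the Helgason--Poisson transform at spectral parameter $\lambda = -1$, as $w$ pairs against the smooth function $1$ on the compact $S^1$).
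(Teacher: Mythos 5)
Your proof of the first assertion is the same as the paper's: combine $L_\gamma^*w=N_\gamma w$ from \eqref{eq:bd0} with $L_\gamma^*d\nu=N_\gamma^{-1}d\nu$ from \eqref{eq:jacobianG}. For the vanishing of $\int_{S^1}w\,d\nu$, however, you take a genuinely different and more self-contained route. The paper quotes the quantum--classical pairing formula \cite[Lemma 5.11]{DFG15} at $\lambda=-1$ together with \cite[Lemma 5.8]{DFG15} (so that $\pi_*v_1,\pi_*v_2$ are harmonic, hence constant), and then uses the time-reversal map $R$ to identify resonant with co-resonant states and conclude $\lVert \pi_*v\rVert^2=0$. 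You instead exploit two elementary facts: (i) since $v\in\res_X^0(-1)$ satisfies $Xv=v$ and $X$ preserves $\Omega$, pairing with the constant function $1$ gives $\langle v,1\rangle=\langle Xv,1\rangle=0$ (this pairing needs no wavefront condition -- a distribution always pairs with a smooth function on a compact manifold); and (ii) at the exponent $\lambda=-1$ the Poisson-kernel Jacobian $(B_-|_{S_x})_*\,dS_x=\Phi_-\,d\nu$, which follows from \eqref{eq:+-invariance}, \eqref{eq:jacobianG} and the cocycle relation for $N_\gamma$, cancels the factor $\Phi_-^{-1}$ in $v$ exactly, so $\pi_*v\equiv\int_{S^1}w\,d\nu$ \emph{identically}, without any appeal to harmonicity. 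Disintegrating $\Omega$ over the fibres then forces the integral to vanish. Both arguments are correct; the paper's is shorter given the imported machinery and fits the broader $\lambda$-dependent pairing framework used later, while yours avoids external pairing formulas and the time-reversal step entirely, makes transparent why $\lambda=-1$ is special (exact cancellation of the Poisson factors combined with $\lambda\neq 0$ in the eigen-equation), and in passing recovers the stronger statement $\pi_*v\equiv 0$ that the paper also needs. The distributional technicalities you flag (pullback under the submersion $B_-$, pushforward along the compact fibres of $\pi$, duality $\langle v,1\rangle_{S\Sigma}=\langle\pi_*v,1\rangle_\Sigma$) are all standard and handled exactly as you indicate.
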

\begin{proof}
	For the first claim, recall that by \eqref{eq:jacobianG} we have $L_\gamma^*d\nu = N_\gamma^{-1}(\nu) d\nu$ for any $\gamma \in G$. Therefore, by \eqref{eq:bd0} we have also $L_\gamma^*(wd\nu) = wd\nu$ for any $\gamma \in \Gamma$ which gives the required property.
	
	The second property is a direct consequence of the works \cite{DFG15} or \cite{GHW18} on pairings. Note that \cite[Lemma 5.11.]{DFG15} proves a pairing formula, which for $\lambda = -1$ gives
\begin{equation}\label{eq:push}
	\langle{\pi_* v_1, \pi_* v_2}\rangle_\Sigma = 0
\end{equation}
	for all $v_1$ resonance states at $-1$ and $v_2$ co-resonant states at $-1$. Here $\pi_*$ maps first band resonant and co-resonant states at $-1$ to eigenfunctions of $\Delta$ on $\Sigma$ at zero by \cite[Lemma 5.8.]{DFG15}, so $\pi_*v_1$ and $\pi_* v_2$ are constants. Using the time-reversal map $R$ from Subsection \ref{subsec:reversal} we may identify resonant and co-resonant states, i.e. we have $R^*: \res_X^0(-1) \to \res_{X^*}^0(-1)$ an isomorphism. Moreover, we claim that $\pi_* R^*v = \pi_* v$ for any $v \in \res_X^0(-1)$. For this recall the connection $1$-form $\psi$ on $S\Sigma$ (dual to the vertical fibre), and observe that $\pi_*v = \pi_*(v \psi)$. Then for any two form $\theta$ on $\Sigma$
	\[\langle{\pi_* (R^*v\psi), \theta}\rangle_{\Sigma} = \int_{S\Sigma} R^*v \psi \wedge \pi^*\theta = \langle{\pi_*(v \psi), \theta}\rangle_{\Sigma}.\]
	Here we used $R^*\psi = \psi$ and $\pi \circ R = \pi$. By applying \eqref{eq:push} to $v_2 = R^*v_1$, we obtain that $\pi_*$ is zero on both resonant and co-resonant states.
	
	Alternatively, this follows directly from the proof of \cite[Theorem 1]{GHW18} (more precisely, see \cite[p. 19]{GHW18} and the start of discussion of $\lambda_0 = -n$ case).
\end{proof}
Next we prove an auxiliary lemma that relies on the dynamics of the action of $\Gamma$ on $S^1$.

\begin{lemma}\label{lemma:kernelw}
	Let $w \in \Bd^0(-1)$ and let $(\nu_-, \nu_+) \in S^1 \times S^1$ with $\nu_- \neq \nu_+$. Then there exists a $\varphi \in C^\infty(S^1)$, such that
	\begin{itemize}
		\item[1.] $\varphi \geq 0$.
		\item[2.] $\varphi(\nu_+) \neq 0$.
		\item[3.] $\varphi$ vanishes in a neighbourhood of $\nu_-$.
		\item[4.] $\langle{w, \varphi}\rangle_{S^1} = 0$.
	\end{itemize}
\end{lemma}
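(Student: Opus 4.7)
The plan is to seek $\varphi$ as a non-negative combination of two disjointly supported smooth bumps,
\[
\varphi = \varphi_1 + \beta \varphi_2, \qquad \beta \geq 0,
\]
where both $\varphi_i$ are non-negative, vanish in a common neighbourhood of $\nu_-$, and $\varphi_1(\nu_+) > 0$. If $\langle w, \varphi_1\rangle = 0$ we take $\beta = 0$ and are done; otherwise we set $\beta = -\langle w, \varphi_1\rangle/\langle w, \varphi_2\rangle$, which is admissible provided $\varphi_2$ can be arranged so that $\langle w, \varphi_2\rangle$ has the opposite sign to $\langle w, \varphi_1\rangle$. Thus the heart of the matter is the following claim: if $w \neq 0$, then for every $\nu_- \in S^1$ there exist non-negative smooth bumps on $S^1$ supported away from $\nu_-$ whose pairings with $w$ realise both signs. (If $w = 0$ the lemma is trivial, since any non-negative bump with the required support and positivity at $\nu_+$ works.)

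I would argue the claim by contradiction. Suppose for instance that $\langle w, \psi\rangle \geq 0$ for every non-negative $\psi \in C^\infty(S^1)$ with $\nu_- \notin \supp \psi$. Then the distribution $w\, d\nu$ restricted to the open set $S^1 \setminus \{\nu_-\}$ is a non-negative Radon measure. By the $\Gamma$-invariance of $w\, d\nu$ from Lemma~\ref{lemma:wproperties}, this non-negativity transports to every $\Gamma$-translate: for each $\gamma \in \Gamma$, the restriction of $w\, d\nu$ to $S^1 \setminus \{L_\gamma(\nu_-)\}$ is again a non-negative measure. Since $\Gamma$ is a cocompact Fuchsian group, the stabiliser of any point of $S^1$ in $\Gamma$ is at most cyclic, so $\Gamma \cdot \nu_-$ is infinite.

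To upgrade to global non-negativity, I would cover $S^1$ by two proper open arcs $U_1, U_2$, pick $\gamma_i \in \Gamma$ with $L_{\gamma_i}(\nu_-) \notin U_i$, and take a partition of unity $\{\chi_i\}$ subordinate to $\{U_i\}$. Any non-negative $\psi \in C^\infty(S^1)$ then decomposes as $\sum_i \chi_i \psi$, each summand being non-negative and supported inside $U_i \subset S^1 \setminus \{L_{\gamma_i}(\nu_-)\}$, hence pairing non-negatively with $w$. Consequently $w\, d\nu \geq 0$ on $S^1$; combined with $\int_{S^1} w\, d\nu = 0$ from Lemma~\ref{lemma:wproperties} this forces $w \equiv 0$, a contradiction. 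The opposite sign assumption is handled identically by replacing $w$ with $-w$. I do not expect a serious obstacle here: all ingredients ($\Gamma$-invariance, vanishing total integral, and density/infiniteness of the boundary orbit via minimality of the $\Gamma$-action on $S^1$) are already at hand, and the only mildly delicate point is the passage from ``non-negative on each $S^1 \setminus \{L_\gamma(\nu_-)\}$'' to ``non-negative on $S^1$'', which the partition of unity above dispatches in one line.
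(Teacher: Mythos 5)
Your argument is correct, and it takes a genuinely different route from the paper's. The paper constructs the second (negatively paired) bump explicitly: since $\langle w,1\rangle=0$, the function $1-\varphi_\varepsilon$ supported near $\nu_-$ pairs with the opposite sign, and its support is then pushed onto the attracting fixed point of a hyperbolic $\gamma\in\Gamma$ (chosen with fixed points away from $\nu_\pm$) using the north--south dynamics of $L_{\gamma^n}$ together with the invariance $\langle w,\psi\rangle=\langle w,\psi\circ L_{\gamma^{-1}}\rangle$; a positive linear combination then kills the pairing. You instead argue by contradiction: if all non-negative bumps supported away from $\nu_-$ paired with a single sign, then $w\,d\nu$ would be a non-negative (or non-positive) Radon measure on $S^1\setminus\{\nu_-\}$; the $\Gamma$-invariance transports this to $S^1\setminus\{L_\gamma(\nu_-)\}$ for every $\gamma$, and your partition-of-unity argument over two arcs upgrades it to global positivity, which together with $\int_{S^1}w\,d\nu=0$ forces $w\equiv 0$. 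Both proofs rest on the same two inputs from Lemma \ref{lemma:wproperties} plus a dynamical fact about the boundary action --- you use minimality (density of every orbit, which is what you really need rather than mere infiniteness; for this you should take the two arcs $U_1,U_2$ with complements of nonempty interior), whereas the paper uses the existence of hyperbolic elements with fixed points avoiding $\nu_\pm$. Your version is conceptually cleaner (sign-definiteness $\Rightarrow$ measure $\Rightarrow$ zero) while the paper's is constructive. Two small remarks: the disjointness of $\supp\varphi_1$ and $\supp\varphi_2$ you impose is not needed for conditions 1--3 (and is not what your claim delivers anyway), so you can simply drop it; and, like the paper, you implicitly treat $\langle w,\varphi\rangle$ as real --- for complex $w$ one should first reduce to real $w$ using $\overline{\Bd^0(-1)}=\Bd^0(-1)$, a point shared with the original proof.
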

\begin{proof}
	We denote by $B_\varepsilon(A)$ the $\varepsilon$-neighbourhood of a set $A$. Let $\varphi_\varepsilon \in C^\infty(S^1)$ be a non-negative function with $\varphi_\varepsilon = 1$ outside $B_\varepsilon(\nu_-)$ and $\varphi_\varepsilon = 0$ in $B_{\varepsilon/2}(\nu_-)$; assume also $0 \leq \varphi_\varepsilon \leq 1$. Here $\varepsilon > 0$ is a small enough positive number. If $\langle{w, \varphi_\varepsilon}\rangle = 0$ for some $\varepsilon$, we are done by setting $\varphi = \varphi_\varepsilon$. 
	If not, then we may assume $\langle{w, \varphi_\varepsilon}\rangle > 0$ for every $\varepsilon > 0$. For, assume $\langle{w, \varphi_\varepsilon}\rangle > 0$ and $\langle{w, \varphi_\delta}\rangle < 0$ for some $\varepsilon, \delta > 0$. Then if we take $s = - \frac{\langle{w, \varphi_\varepsilon}\rangle}{\langle{w, \varphi_\delta}\rangle} > 0$, we have $\langle{w, \varphi_\varepsilon + s\varphi_\delta}\rangle = 0$ and so we are done by setting $\varphi = \varphi_\varepsilon + s\varphi_\delta$.
	
	Next, we may w.l.o.g. assume $\langle{w, \varphi_\varepsilon}\rangle > 0$ for all $\varepsilon > 0$ small enough. By Lemma \ref{lemma:wproperties} we have $\langle{w, 1}\rangle = 0$, which implies $\langle{w, 1 - \varphi_\varepsilon}\rangle < 0$. The invariance of $w(\nu) d\nu$ under the action of $\Gamma$ following from Lemma \ref{lemma:wproperties} then yields, that for any $\psi \in C^\infty(S^1)$
	\begin{equation}\label{eq:changeofcoord}
		\langle{w, \psi}\rangle = \int_{S^1} L_\gamma^*(w(\nu) d\nu) \psi = \int_{S^1} w(\nu) \psi \circ L_{\gamma^{-1}}(\nu) d\nu = \langle{w, \psi \circ L_{\gamma^{-1}}}\rangle.
	\end{equation}
	Now use that since $\Gamma \cong \pi_1(M)$ has $2g \geq 4$ generators, it is not elementary by \cite[Theorem 2.4.3]{SK92}. Therefore, by \cite[Exercise 2.13]{SK92} we have that $\Gamma$ contains infintely many hyperbolic elements (fixing exactly two elements of $S^1$), no two of which have a common fixed points.
	
	So take $\gamma \in \Gamma$ hyperbolic such that $\nu_-, \nu_+$ are not in the set of fixed points of $\gamma$, which we denote by $\{p_1, p_2\}$. Assume without loss of generality $p_1$ is an attractor and $p_2$ a repeller.
	
	By \eqref{eq:changeofcoord} for $\psi = 1 - \varphi_\varepsilon$, we get that $\langle{w, 1 - \varphi_\varepsilon}\rangle = \langle{w, (1 - \varphi_\varepsilon)\circ L_{\gamma^{-1}}}\rangle < 0$. Since $\supp ((1 - \varphi_\varepsilon)\circ L_{\gamma^{-1}}) = L_\gamma(B_\varepsilon(\nu_-))$, we have that for $n \geq N_0$ large enough, $\varphi_{\varepsilon, n} := (1 - \varphi_\varepsilon)\circ L_{\gamma^{-n}}$ has support arbitrarily close to $p_1$, so disjoint from $\nu_-$ and $\nu_+$. Therefore,
	for $s = -\frac{\langle{w, \varphi_\varepsilon}\rangle}{\langle{w, \varphi_{\varepsilon, n}}\rangle} > 0$, we have
	\[\langle{w, \varphi_\varepsilon + s \varphi_{\varepsilon, n}}\rangle = 0.\]
	Then $\varphi = \varphi_\varepsilon + s \varphi_{\varepsilon, n}$ does the job.
\end{proof}
With this in hand, we can prove the following claim:

\begin{theorem}\label{thm:time-change}
	Let $\Sigma = \Gamma \backslash \mathbb{H}^2$ be a closed hyperbolic surface. Fix $w_2 \in \Bd^0(-1)$ and let $v_2 \in \res_{X^*}^0(-1)$ be the corresponding co-resonant state, according to Lemma \ref{lemma:boundarydistributions}. Then there exists an $f \in C^\infty(S\Sigma)$ with $f > 0$ such that
	\begin{equation}
		\int_{S\Sigma} f v_1 \overline{v}_2 dxd\xi = 0
	\end{equation}
	for all $v_1 \in \res_{X}^0(-1)$. In other words, semisimplicity of the Lie derivative $\Lapl_{-iX/f}$ acting on resonant one forms at zero fails.
\end{theorem}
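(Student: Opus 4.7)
My plan is to build $f$ as a finite sum of nonnegative ``local pieces'' $f_\alpha$, each individually killing the pairing $v_1\mapsto\int_{S\Sigma}f_\alpha v_1\bar v_2\,dx\,d\xi$ on all of $\res_X^0(-1)$, and with the pieces together strictly covering $S\Sigma$. The case $w_2=0$ being trivial (take $f=1$), I may assume $w_2\ne 0$. The starting point is Proposition \ref{prop:+-representation} applied with $\lambda=-1$: writing $v_1=\Phi_-^{-1}B_-^*w_1$ for $w_1\in\Bd^0(-1)$ varying and $v_2=\Phi_+^{-1}B_+^*w_2$, in $(\nu_-,\nu_+,s)$-coordinates
\[ F^*\bigl(v_1\bar v_2\,dx\,d\xi\bigr)=\tfrac12\,w_1(\nu_-)\,\overline{w_2(\nu_+)}\,ds\,d\nu_-\,d\nu_+. \]
Hence, if a compactly supported $\tilde f_0\in C^\infty(\tilde U)$ on a chart of the form $\tilde U=F(V_-\times V_+\times I)$ with disjoint arcs $V_-,V_+\subset S^1$ has tensor form $\tilde f_0(\nu_-,\nu_+,s)=\chi_-(\nu_-)\,\varphi(\nu_+)\,\chi_s(s)$, the pairing factorises as
\[ \int_{S\mathbb{H}^2}\tilde f_0\,v_1\bar v_2\,dx\,d\xi=\tfrac12\langle w_1,\chi_-\rangle\cdot\overline{\langle w_2,\varphi\rangle}\cdot\int_I\chi_s\,ds, \]
and therefore vanishes for \emph{every} $w_1$ provided $\langle w_2,\varphi\rangle=0$.

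The key auxiliary fact, in the spirit of Lemma \ref{lemma:kernelw}, is that $w_2$ changes sign on every open arc $V\subset S^1$: if $w_2\geq 0$ on $V$ as a distribution, then because $w_2\,d\nu$ is $\Gamma$-invariant (Lemma \ref{lemma:wproperties}) and $\Gamma$ acts minimally on $S^1$ (by cocompactness), the inequality propagates to all of $S^1$, and combined with the mean-zero identity $\int_{S^1}w_2\,d\nu=0$ this forces $w_2\equiv0$, contradicting our assumption; the case $w_2\leq 0$ is symmetric. It follows that for every open arc $V\subset S^1$ and every precompact open sub-arc $V'\Subset V$, one can construct $\varphi\in C_c^\infty(V)$ with $\varphi\geq 0$, $\varphi>0$ on $\overline{V'}$, and $\langle w_2,\varphi\rangle=0$: take $\varphi_0\in C_c^\infty(V)$ equal to $1$ on $\overline{V'}$, and a nonnegative $\varphi_1\in C_c^\infty(V\setminus\overline{V'})$ with $\langle w_2,\varphi_1\rangle$ of sign opposite to $\langle w_2,\varphi_0\rangle$ (which exists by the sign-change property applied in $V\setminus\overline{V'}$), then set $\varphi=\varphi_0+c\varphi_1$ for the unique $c>0$ annihilating the pairing.

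Finally, I cover the compact manifold $S\Sigma$ by finitely many precompact open sets $V^\alpha\Subset U^\alpha$, each $U^\alpha$ admitting a single-valued lift $\tilde U^\alpha=F(V_-^\alpha\times V_+^\alpha\times I^\alpha)\subset S\mathbb{H}^2$ with disjoint arcs $V_-^\alpha,V_+^\alpha$ and $\gamma\tilde U^\alpha\cap\tilde U^\alpha=\emptyset$ for $\gamma\ne e$; such a cover exists by compactness, proper discontinuity of $\Gamma$, and the fact that $B_-\ne B_+$ everywhere on $S\mathbb{H}^2$. Let $\tilde V^\alpha=F(\tilde V_-^\alpha\times\tilde V_+^\alpha\times\tilde I^\alpha)$ be the corresponding lift of $V^\alpha$. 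Apply the previous paragraph with $V=V_+^\alpha$, $V'=\tilde V_+^\alpha$ to obtain $\varphi_\alpha$, and choose nonnegative cutoffs $\chi_-^\alpha\in C_c^\infty(V_-^\alpha)$, $\chi_s^\alpha\in C_c^\infty(I^\alpha)$ strictly positive on $\overline{\tilde V_-^\alpha}$ and $\overline{\tilde I^\alpha}$ respectively. Then $\tilde f_0^\alpha:=\chi_-^\alpha\,\varphi_\alpha\,\chi_s^\alpha$, extended by zero from $\tilde U^\alpha$ and descended to $f_\alpha\in C^\infty(S\Sigma)$, is smooth, nonnegative, strictly positive on $V^\alpha$, and by the factorisation above satisfies $\int_{S\Sigma}f_\alpha v_1\bar v_2\,dx\,d\xi=0$ for every $v_1$. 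Setting $f:=\sum_\alpha f_\alpha$ yields a smooth strictly positive function on $S\Sigma=\bigcup_\alpha V^\alpha$ with $\int_{S\Sigma}f v_1\bar v_2\,dx\,d\xi=0$ for every $v_1\in\res_X^0(-1)$, and by Proposition \ref{prop:semisimplicitypairing} (applied to the time change $X/f$) this shows that $\Lapl_{-iX/f}$ is not $1$-semisimple.

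The main obstacle is the sign-change property: it relies crucially on the interplay between $\Gamma$-invariance of the current $w_2\,d\nu$, the mean-zero identity on $S^1$, and minimality of the $\Gamma$-action on $S^1$. Without these, $w_2$ could have constant sign on some open arc, and the construction of $\varphi_\alpha$ would collapse; together they are exactly what forces the desired local flexibility.
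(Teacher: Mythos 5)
Your proposal is correct and its overall architecture coincides with the paper's: both exploit the $\lambda=-1$ product formula of Proposition \ref{prop:+-representation} to reduce the pairing against a tensor-product test function $\chi_-(\nu_-)\varphi(\nu_+)\chi_s(s)$ to the factor $\overline{\langle{w_2,\varphi}\rangle}$; both then assemble $f$ from finitely many such non-negative local pieces covering $S\Sigma$ (you via evenly covered charts and extension by zero, the paper via a cover of a fundamental domain followed by the pushforward $\pi_*$ --- these are equivalent); and both invoke Proposition \ref{prop:semisimplicitypairing} to conclude. Where you genuinely diverge is in the auxiliary lemma producing non-negative $\varphi$ in $\ker w_2$ with prescribed support. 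The paper's Lemma \ref{lemma:kernelw} achieves this via the north--south dynamics of a single hyperbolic element $\gamma\in\Gamma$ whose fixed points avoid $\nu_\pm$: iterating $L_{\gamma^{-n}}$ pushes the support of the complementary bump onto a small neighbourhood of the attracting fixed point while the invariance identity \eqref{eq:changeofcoord} preserves the (nonzero) pairing, so a positive linear combination lands in $\ker w_2$. You instead prove that $w_2$ changes sign on every open arc: a one-sided sign on an arc propagates to all of $S^1$ by $\Gamma$-invariance of $w_2\,d\nu$ together with minimality of the boundary action of a cocompact Fuchsian group, whence $w_2$ would be a non-negative distribution, hence a positive measure, of total mass zero by Lemma \ref{lemma:wproperties}, i.e.\ zero. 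Your route is arguably cleaner and needs no choice of hyperbolic element avoiding prescribed points --- only minimality and the Riesz characterisation of non-negative distributions --- while the paper's is more hands-on. One shared caveat: both arguments implicitly treat the pairings $\langle{w_2,\cdot}\rangle$ as real, which is harmless since $\Bd^0(-1)$ is stable under complex conjugation and one may reduce to real $w_2$.
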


\begin{proof}
	We divide the construction of $f$ into several steps.
	
	\emph{Step 1.} First, fix $(x_0, \xi_0) \in S\mathbb{H}^2$. Denote the corresponding coordinates of $(x_0, \xi_0)$ by $(\nu_{0-}, \nu_{0+}, s_0)$, according to \ref{eq:coords}. By Lemma \ref{lemma:kernelw}, there is a non-negative $\varphi_+ \in C^\infty(S^1)$, non-vanishing at $\nu_{0+}$, vanishing near $\nu_{0-}$ and in the kernel of $w_2$. Now let $\varphi_- \in C^\infty(S^1)$ be such that $\varphi_- \geq 0$, $\varphi_-(\nu_{0-}) \neq 0$ and $\supp(\varphi_+) \cap \supp(\varphi_-) = \emptyset$. 
	Also, let $\psi \in C_0^\infty(\mathbb{R})$ be such that $\psi(s_0) \neq 0$ and $\psi \geq 0$. Set $\chi(\nu_-, \nu_+, s) := \varphi_+(\nu_+) \varphi_-(\nu_-) \psi(s)$. Take any $w_1 \in \Bd^0(-1)$ and denote the corresponding element of $\res_X^0(-1)$ by $v_1$. Then by the computation in Proposition \ref{prop:+-representation} for $\lambda = -1$, we have $F^*\pi_\Gamma^*(v_1 \overline{v}_2 dxd\xi) = \frac{1}{2}w_1(\nu_-) \overline{w}_2(\nu_+) d\nu_-d\nu_+ ds$ and
	\begin{multline}
		\big\langle{\pi_\Gamma^*(v_1 \overline{v}_2 dx d\xi), F_* \chi}\big\rangle_{S\mathbb{H}^2} = \frac{1}{2}\big\langle{w_1(\nu_-) \overline{w}_2(\nu_+) d\nu_-d\nu_+ ds, \chi}\big\rangle_{(S^1 \times S^1)_\Delta \times \mathbb{R}}\\ = \frac{1}{2}\langle{w_1, \varphi_-}\rangle \langle{\overline{w}_2, \varphi_+}\rangle \langle{ds, \psi}\rangle = 0
	\end{multline}
	since $\langle{w_2, \varphi_+}\rangle = 0 $ by the construction. We will denote the $\chi$ above by $\chi_{(x_0, \xi_0)}$ and by $U_{(x_0, \xi_0)}$ a neighbourhood of $(x_0, \xi_0)$ where $F_*\chi_{(x_0, \xi_0)} > 0$. Note that $\chi$ is a function in $C_0^\infty\Big((S^1 \times S^1)_\Delta \times \mathbb{R}\Big)$, by the condition on disjoint supports of $\varphi_-$ and $\varphi_+$ in the construction, and as $\psi \in C_0^\infty(\mathbb{R})$. Therefore we have $F_*\chi$ a function in $C_0^\infty(S\mathbb{H}^2)$.
	
	\emph{Step 2.} Denote by $\mathcal{D} \subset \mathbb{H}^2$ a compact fundamental domain for $\Sigma$. Then $S\mathcal{D}$ is a fundamental domain for $S\Sigma$. 
	By compactness, we have an $N > 0$ and $(x_i, \xi_i) \in S\mathbb{H}^2$ for $i = 1, 2, \dotso, N$ such that 
	\[S \mathcal{D} \subset \bigcup_{(x_i, \xi_i)} U_{(x_i, \xi_i)}.\]
	Define then 
	\[F_*\chi(x, \xi) := \sum_{i = 1}^N F_*\chi_{(x_i, \xi_i)} (x, \xi) \in C_0^\infty\big(S\mathbb{H}^2\big).\]
	By the construction, we have
	\begin{equation}\label{eq:zerointegral}
		\big\langle{\pi_\Gamma^*(v_1\overline{v}_2 dxd\xi), F_*\chi}\big\rangle_{S\mathbb{H}^2} = \frac{1}{2}\sum_{i = 1}^N \big\langle{w_1(\nu_-)\overline{w}_2(\nu_+) d\nu_- d\nu_+ ds, \chi_{(x_i, \xi_i)}}\big\rangle_{(S^1 \times S^1)_\Delta \times \mathbb{R}} = 0.
	\end{equation}
	
	\emph{Step 3.} We introduce the push-forward map $\pi_*: C_0^\infty(S\mathbb{H}^2) \to C^\infty(S\Sigma)$, by defining for any $\eta \in C_0^\infty(S\mathbb{H}^2)$
	\begin{equation}
		\pi_* \eta (x, \xi) := \sum_{\gamma \in \Gamma} \eta\big(\gamma \cdot(x_0, \xi_0)\big) \in C^\infty(S\Sigma).
	\end{equation}
	Here $(x_0, \xi_0) \in \pi_\Gamma^{-1}(x, \xi) \subset S\mathbb{H}^2$ is an arbitrary point in the fiber and the definition of $\pi_*$ is independent any choices. Note that the only accumulation points of orbits of $\Gamma$ are on the boundary at infinity $S^1$, so the pushforward is well-defined and sequentially continuous. Note also that $\pi_*$ is dual to $\pi_\Gamma^*$ in the sense of distributions.
	
	Then we observe that $f(x, \xi) := \pi_* F_*\chi (x, \xi) \in C^\infty(S\Sigma)$ satisfies the required properties. Firstly,
	\begin{equation}
		\big\langle{v_1\overline{v}_2 dx d\xi, f}\big\rangle_{S\Sigma} = \big\langle{\pi_\Gamma^*(v_1\overline{v}_2 dxd\xi), F_*\chi}\big\rangle_{S\mathbb{H}^2} = 0
	\end{equation}
	by the equation \eqref{eq:zerointegral} from Step 2 and duality of $\pi_*$ with $\pi_\Gamma^*$. Secondly, we have $f > 0$. To see this, let $(x, \xi) \in S\Sigma$ and denote a lift to $S\mathbb{H}^2$ by $(x_0, \xi_0)$. Then there exists $\gamma' \in \Gamma$ with $\gamma' \cdot (x_0, \xi_0) \in \mathcal{D}$. Therefore, there is an $i \in \{1, 2, \dotso, N\}$ with $\gamma' \cdot (x_0, \xi_0) \in U_{(x_i, \xi_i)}$ and so $F_*\chi_{(x_i, \xi_i)}(\gamma' \cdot (x_0, \xi_0)) > 0$. Hence
	\[f(x, \xi) = \sum_{\gamma \in \Gamma} F_*\chi(\gamma \cdot (x_0, \xi_0)) \geq \sum_{i = 1}^N F_*\chi_{(x_i, \xi_i)}\big(\gamma' \cdot (x_0, \xi_0)\big) \geq F_*\chi_{(x_i, \chi_i)}(\gamma' \cdot (x_0, \xi_0)) > 0.\]
	
This proves the first claim. The final claim now follows directly from the correspondence in \eqref{eq:correspondence} and Proposition \ref{prop:semisimplicitypairing}.
\end{proof}

\begin{rem}\rm
	One may see the element in the kernel of $\Lapl_{X/f}^2$ and not in the kernel of $\Lapl_{X/f}$ constructed in Theorem \ref{thm:time-change} more explicitly. Namely, one such element is given by the formula
	\[u' = -iR_H(0) \big(f u\big).\]
	Here $u \in \res_X^0(-1)$ is an element such that $\int_{S\Sigma} f u v dxd\xi = 0$ for all $v \in \res_{X^*}^0(-1)$ and $R_H(\lambda)$ is the holomorphic part at zero of $(-i\Lapl_X - \lambda)^{-1}$ on one forms. The conclusion follows as $\Pi_0(fu) = 0$ and $-iR_H(0)$ is an inverse to $\Lapl_X$ on $\ker \Pi_0 \cap \mathcal{D}'_{E_u^*}(M; \Omega^1)$. 
\end{rem}
Theorem \ref{thm:time-change}. completes the proof of Theorem \ref{thm:localresonance}. We conclude this section with the following:

\begin{proof}[Proof of the second part of Corollary \ref{cor:zeta'}]
	By Theorem \ref{thm:localresonance} there is a time change $fX$ on the unit sphere bundle $S\Sigma$ of a closed hyperbolic surface $\Sigma$ with $\ker \Lapl_{fX}^2 \neq \ker \Lapl_{fX}$ on $\Omega_0^1(S\Sigma)$. By Theorem \ref{thm:volpres}, for the flow $fX$ we have $m_0(0) = m_2(0) = 1$ and $\dim \res_1(0) = b_1(\Sigma)$, so that $m_1(0) \geq b_1(\Sigma) + 1$. The claim then follows by applying Corollary \ref{cor:3dfactor}.
\end{proof}





\end{document}